\newcommand{\missing}[1]{\marginpar{\color{red}missing:\\#1}}
\newcounter{Definitioncount}
\newtheorem{theorem}{Theorem}% with counter
\newtheorem{proposition}[theorem]{Proposition}
\newtheorem{corollary}[theorem]{Corollary}
\theoremstyle{definition}
\newtheorem*{Definition}{Definition}% no counter
\newtheorem*{Remark}{Remark}% no counter
\renewcommand{\theexample}{\arabic{example}}
\newtheoremstyle{fact}{\bigskipamount}{\medskipamount}{\upshape}{}{\itshape}{. }{ }{Fact}
\theoremstyle{fact}
\newtheoremstyle{genquest}{\bigskipamount}{\medskipamount}{\upshape}{}{\itshape}{. }{ }{General Question}
\theoremstyle{genquest}
\newtheoremstyle{step}{2\bigskipamount}{\medskipamount}{\upshape}{}{\itshape}{. }{ }{\underline{Step~\thestep}}
\theoremstyle{step}
\renewcommand{\thestep}{\arabic{step}}
\newcommand{\lra}{\longrightarrow}
\newcommand{\lla}{\longleftarrow}
\newcommand{\Lra}{\Longrightarrow}
\newcommand{\Lla}{\Longleftarrow}
\newcommand{\ldual}[1]{\mathord{{\let\nolimits\relax\sideset{^\wedge}{}{#1}}}}
\newcommand{\laction}[2]{\mathord{{\let\nolimits\relax\sideset{^{#1}}{}{#2}}}}
\newcommand{\conj}[2]{\mathord{{\let\nolimits\relax\sideset{^{#1}}{}{#2}}}}
\newcommand{\xylongto}[2][2pt]{{\UseTips{}\xymatrix @C+#1{ \ar[r]^{#2}&}}}
\newcommand{\ox}{\otimes}
\newcommand{\xra}{\xrightarrow}
\newcommand{\xla}{\xleftarrow}
\def\CA{{\mathscr A}}
\def\CB{{\mathscr B}}
\def\CC{{\mathscr C}}
\def\CD{{\mathscr D}}
\def\CE{{\mathscr E}}
\def\CF{{\mathscr F}}
\def\CG{{\mathscr G}}
\def\CH{{\mathscr H}}
\def\CI{{\mathscr I}}
\def\CJ{{\mathscr J}}
\def\CK{{\mathscr K}}
\def\CL{{\mathscr L}}
\def\CM{{\mathscr M}}
\def\CN{{\mathscr N}}
\def\CO{{\mathscr O}}
\def\CP{{\mathscr P}}
\def\CQ{{\mathscr Q}}
\def\CR{{\mathscr R}}
\def\CS{{\mathscr S}}
\def\CT{{\mathscr T}}
\def\CU{{\mathscr U}}
\def\CV{{\mathscr V}}
\def\CW{{\mathscr W}}
\def\CX{{\mathscr X}}
\def\CY{{\mathscr Y}}
\def\CZ{{\mathscr Z}}
\newcommand*\bigcdot{\mathpalette\bigcdot@{.5}}
\newcommand*\bigcdot@[2]{\mathbin{\vcenter{\hbox{\scalebox{#2}{$\m@th#1\bullet$}}}}}
\def\theequation{{\thesection.\arabic{equation}}}
\begin{document}
\author{Ross Street \footnote{The author gratefully acknowledges the support of Australian Research Council Discovery Grant DP160101519.} \\ 
\small{Macquarie University, NSW 2109 Australia} \\
\small{<ross.street@mq.edu.au>}
}
\title{Vector product and composition algebras \\ in braided monoidal additive categories}
\date{\small{\today}}
\maketitle

\noindent {\small{\emph{2010 Mathematics Subject Classification:} 18D10; 15A03; 17A75; 11E20}
\\
{\small{\emph{Key words and phrases:} string diagram; vector product; bilinear form; braiding; monoidal dual.}}

\begin{abstract}
\noindent This is an account of some work of Markus Rost and his students Dominik Boos and Susanne Maurer. We adapt it to the braided monoidal setting.      
   
\end{abstract}

\section*{Introduction}

This is a fairly detailed account of some work of Markus Rost \cite{Rost1996} and his students Dominik Boos \cite{Boos1998} and Susanne Maurer \cite{Maur1998}. I am grateful to John Baez for alerting us to this material in a seminar at Macquarie University on 22 January 2003. The stimulus for revisiting this material was an invitation to
participate in the Workshop on Diagrammatic Reasoning in Higher Education during November 2018 in Newcastle, 
New South Wales\footnote{See \url{https://carma.newcastle.edu.au/meetings/drhe/}}.
It seemed a great opportunity to demonstrate the joy and power of string diagrams for
proving substantial algebraic facts.  

The contribution of the present paper is to adapt and generalize the ideas of Rost \cite{Rost1996} to the braided monoidal additive setting and to keep the diagrams
closer to those of Joyal and the author in \cite{37, 44}.
In this context, the goals are to prove that there are not many dimensions 
in which vector product algebras can exist and that the category of 
vector product algebras is equivalent to the category of composition algebras.

\section{Axiomatics}

\begin{Definition} A {\em vector product algebra (vpa)} over a commutative ring $R$ is an $R$-module $V$ equipped with a symmetric non-degenerate bilinear form $\bigcdot : V\ox V\to R$ and a linear map $\wedge : V\ox V\to V$ such that
\begin{eqnarray}
x\wedge y  & = & - y\wedge x \label{asym}\\
(x\wedge y)\bigcdot z  & = & (z\wedge x)\bigcdot y \label{cycsym}\\
(x \wedge y) \wedge z + x\wedge (y\wedge z)  & = & 2(x\bigcdot z)y - (x\bigcdot y)z - (y\bigcdot z) x \label{strange}
\end{eqnarray}
\end{Definition}

We call $\bigcdot$ the inner or dot product and $\wedge$ the exterior or vector product.
Condition \eqref{asym} expresses the antisymmetry of $\wedge$ and \eqref{cycsym} the cyclic symmetry; together they mean that $(x\wedge y)\bigcdot z$ is an alternating function of the variables.
Condition \eqref{strange} may seem strange or unfamiliar; however, we have the following observation.
\begin{proposition}
Assume $2= 1+1$ is cancellable in $R$. Then
\begin{itemize}
\item[(a)] \eqref{asym} is equivalent to \eqref{nilp}; 
\begin{eqnarray}
x\wedge x  & = & 0 \label{nilp}
\end{eqnarray}
\item[(b)] \eqref{cycsym} is equivalent to \eqref{scaltrip}; 
\begin{eqnarray}
(x\wedge y)\bigcdot z  & = & x\bigcdot (y\wedge z) \label{scaltrip}
\end{eqnarray}
\item[(c)] in the presence of \eqref{asym}, equation \eqref{strange} is equivalent to \eqref{famshort}.   
\begin{eqnarray}\label{famshort}
(x\wedge y)\wedge x  & = & (x\bigcdot x)y-(x\bigcdot y)x 
\end{eqnarray}
\end{itemize}
\end{proposition}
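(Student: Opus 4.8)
The plan is to treat the three equivalences separately; in each case the forward implication comes from a substitution, and the reverse implication from a polarization (linearization) argument. For (a), setting $y=x$ in \eqref{asym} gives $x\wedge x = -x\wedge x$, so $2(x\wedge x) = 0 = 2\cdot 0$ and cancellability of $2$ yields \eqref{nilp}. Conversely, expanding $(x+y)\wedge(x+y)=0$ by bilinearity of $\wedge$ and subtracting the instances of \eqref{nilp} for $x$ and for $y$ leaves $x\wedge y + y\wedge x = 0$, i.e.\ \eqref{asym}; this direction needs no hypothesis on $2$.

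For (b), applying \eqref{cycsym} twice gives the full cyclic symmetry $(x\wedge y)\bigcdot z = (z\wedge x)\bigcdot y = (y\wedge z)\bigcdot x$; equating the first and last expressions and using symmetry of $\bigcdot$ produces \eqref{scaltrip}. Conversely, \eqref{scaltrip} together with symmetry of $\bigcdot$ gives $(x\wedge y)\bigcdot z = (y\wedge z)\bigcdot x$, and applying this twice recovers \eqref{cycsym}. No assumption on $2$ is used here.

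For (c), the forward direction: put $z=x$ in \eqref{strange}. Using \eqref{asym} twice one has $x\wedge(y\wedge x) = -(y\wedge x)\wedge x = (x\wedge y)\wedge x$, so the left-hand side becomes $2\,(x\wedge y)\wedge x$, while the right-hand side equals $2\big((x\bigcdot x)y-(x\bigcdot y)x\big)$; cancelling $2$ gives \eqref{famshort}. For the converse I would polarize \eqref{famshort} in $x$: setting $f(x):=(x\wedge y)\wedge x-(x\bigcdot x)y+(x\bigcdot y)x$, one computes that $f(x+z)-f(x)-f(z)$ equals
\[
(x\wedge y)\wedge z + (z\wedge y)\wedge x - 2(x\bigcdot z)y + (x\bigcdot y)z + (y\bigcdot z)x ,
\]
and this vanishes since $f\equiv 0$. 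Finally \eqref{asym} gives $(z\wedge y)\wedge x = -(y\wedge z)\wedge x = x\wedge(y\wedge z)$; substituting this into the displayed identity turns it into exactly \eqref{strange}. The only point needing real care is this last polarization — expanding all the bilinear terms correctly and spotting the antisymmetry rewriting $(z\wedge y)\wedge x = x\wedge(y\wedge z)$; everything else is a single substitution. It is also worth recording precisely where cancellability of $2$ enters: only in the forward implications of (a) and (c), and nowhere in the polarization arguments.
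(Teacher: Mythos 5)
Your proof is correct and follows essentially the same strategy as the paper's: a direct substitution for each forward implication and a polarization for each converse. The only genuine (and mild) divergence is in part (c). For the forward direction you substitute $z=x$ rather than the paper's $y=x$, which lets you work directly from \eqref{asym} without first invoking $x\wedge x=0$; both routes end by cancelling a $2$. For the converse, your single polarization $f(x+z)-f(x)-f(z)$ lands exactly on \eqref{strange} with coefficient $1$, whereas the paper substitutes $x\pm z$ and subtracts, obtaining \emph{twice} \eqref{strange} and then cancelling the $2$ once more; so your bookkeeping of where cancellability of $2$ is used (only in the two forward implications) is accurate for your argument but slightly tighter than the paper's. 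The computations you display, including the identity $(z\wedge y)\wedge x=x\wedge(y\wedge z)$ via two applications of \eqref{asym}, all check out.
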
 
\begin{proof} 
(a)  Put $x = y$ in \eqref{asym} to obtain $2 x\wedge x =0$;
then cancel the $2$. Conversely, apply \eqref{nilp} to $x+y$ in place of $x$, use linearity, and apply \eqref{nilp} twice more to obtain $x\wedge y + y\wedge x = 0$. 

\noindent (b) Using \eqref{cycsym} twice, we have
$(x\wedge y)\bigcdot z  = (z\wedge x)\bigcdot y = (y\wedge z)\bigcdot x = x\bigcdot (y\wedge z)$.  
Conversely, using \eqref{scaltrip} at the second step, $(z\wedge x)\bigcdot y  = y\bigcdot (z\wedge x) = (y\wedge z)\bigcdot x = x\bigcdot (y\wedge z)$. 

\noindent (c) Substitute $y=x$ in \eqref{strange} to obtain $0 + x\wedge (x\wedge z) = 2(x\bigcdot z)x-(x\bigcdot x)z-(x\bigcdot z)x = (x\bigcdot z)x-(x\bigcdot x)z$; but $x\wedge (x\wedge z) = - (x\wedge z)\wedge x$ by \eqref{asym}, yielding \eqref{famshort}. 
Conversely, replace $x$ by $x+z$ in \eqref{famshort} then $x$ by $x-z$ in \eqref{famshort} and subtract the two results. We obtain twice \eqref{strange}; cancel the $2$.  
\end{proof} 

\begin{Remark} In other words, for such a ring, \eqref{nilp}, \eqref{scaltrip}, \eqref{famshort} can be taken as alternative axioms for a vpa. Notice that properties \eqref{nilp} and \eqref{famshort} involve expressions in which terms have variables repeated. This causes a problem for defining the concept in a monoidal category since the diagonal function $V\to V\ox V$, $x\mapsto x\ox x$ is not linear. 
The reason \eqref{famshort} should seem more familiar than \eqref{strange} is that as undergraduates we learn the property
\begin{eqnarray}\label{fam}
(x\wedge y)\wedge z  & = & (x\bigcdot z)y-(x\bigcdot y)z 
\end{eqnarray}
for vectors in $\mathbb{R}^3$.
This condition \eqref{fam} does not have repeated variables.
Note that \eqref{famshort} is a special case of \eqref{fam}.
Moreover, \eqref{asym} and \eqref{fam} imply \eqref{strange} over any commutative ring.   
\end{Remark}

\begin{Definition} 
\begin{itemize}
\item[(com)] A vpa $V$ is called {\em commutative} when the wedge operation is zero: for all $x,y\in V$, $x\wedge y =0$.
\item[(ass)] A vpa $V$ is called {\em associative} when \eqref{fam} holds. 
\end{itemize}
\end{Definition}

\begin{Remark} The words ``commutative'' and ``associative'' do not mean the operations of $V$ have these properties in the usual sense. They apply to the following multiplication on $R\oplus V$:
\begin{eqnarray}
(\alpha, x)(\beta, y) = (\alpha \beta - x\bigcdot y, \alpha y + \beta x + x\wedge y) \ .\label{quat}
\end{eqnarray}
Indeed, \eqref{quat} is the formula for multiplication of quaternions when $V=\mathbb{R}^3$ with usual inner and vector products.   
\end{Remark}

\section{Monoidal concepts}

For a commutative ring $R$, the category $\mathrm{Mod}_R$ of $R$-modules (called $R$-vector spaces when $R$ is a field) and linear functions becomes a monoidal category with tensor product given by tensoring over $R$; so, up to isomorphism, $R$ is the unit object for tensoring.

Let $\CV$ denote any monoidal category in which the tensor product functor is denoted by $\ox : \CV\times \CV\to \CV$ and the unit object by $I$. 
Here is the notion of non-degeneracy for a morphism $A\ox B\to I$ in $\CV$.
A little later we will consider the notion of symmetry for such a morphism when $A=B$.  

\begin{Definition} A morphism $\varepsilon : A\ox B\to I$ called a {\em counit} for an adjunction (or duality) $A\dashv B$ when, for all objects $X$ and $Y$, the function 
\begin{eqnarray}
\CV(X,Y\ox A)\to \CV(X\ox B,Y) \ ,\label{radj}
\end{eqnarray} 
sending $X\xra{f} Y\ox A$ to the composite 
$X\ox B\xra{f\ox 1_B} Y\ox A\ox B\xra{1_Y\ox\varepsilon}Y$, is a bijection.
Taking $X=I$ and $Y=B$, we obtain a unique morphism $\eta : I\to B\ox A$, called the {\em unit} of the adjunction,
which maps to the identity of $B$ under the function \eqref{radj}.
\end{Definition}

\begin{proposition}
A morphism $\varepsilon : A\ox B\to I$ is a counit for an adjunction $A\dashv B$ if and only if there exists a morphism $\eta : I\to B\ox A$ satisfying the two equations depicted in \eqref{snakecon}.
\begin{eqnarray}\label{snakecon}
\begin{aligned}
\psscalebox{0.6 0.6} % Change this value to rescale the drawing.
{
% [inline block 0: 19 envs, 32915 chars -> data_tex | \begin{pspicture}(0,-2.8526227)(17.039299,2.8526227) \pscircle[linecolor=black, linewidth=0.04, dimen=outer](2.9392998,0...]

}
\end{aligned}
\end{eqnarray*}
\end{proof}
If $\CV$ is a braided monoidal {\em additive} category then there is an addition
in each hom set $\CV(X,Y)$ such that 
\begin{eqnarray}\label{distribconds}
\begin{aligned}
& h\circ 0\circ k = 0 \ , & h\circ (f+g)\circ k=h\circ f\circ k+h\circ g\circ k \ , 
\\ & U\ox 0\ox V =0 \ , & U\ox (f+g)\ox V =U\ox f\ox V+U\ox g\ox V \ .
\end{aligned}
\end{eqnarray}
When ambiguity is possible, we sometimes write $0_{X,Y}$ for the zero $0$ in the abelian group $\CV(X,Y)$.
In particular, $\CV(I,I)$ is a commutative ring; the multiplicative identity is depicted by an
empty string diagram $\varnothing$ as distinct from the zero $0_{II}$. 

\section{Scarcity of vector product algebras}

\begin{Definition} A {\em vector product algebra (vpa)} in a braided monoidal additive category $\CV$ is an object $V$ equipped with a symmetric self-duality $V\dashv V$ (depicted by a cup $\cup$) and a morphism $\wedge : V\ox V\to V$ (depicted by a Y)  such that the following three conditions hold.
\begin{eqnarray}\label{stringasym}
\begin{aligned}
\psscalebox{0.6 0.6} % Change this value to rescale the drawing.
{
% [inline block 1: 13 envs, 21183 chars -> data_tex | \begin{pspicture}(0,-1.1065843)(5.528201,1.1065843) \psline[linecolor=black, linewidth=0.04](0.015518494,1.0939677)(0.79...]

}
\end{aligned}
\end{eqnarray*}
\end{corollary}
\begin{proof}
Attach a $\cup$ on the right side of the output strings of all five terms of \eqref{stringstrange}. The right-hand side is what we want. On the left-hand side apply symmetry of $\bigcdot$ to the first term, and the alternating form axioms \eqref{stringasym} and \eqref{cycsym} to the second term. 
Each step can be reversed.
\end{proof}

\begin{theorem}
For any associative vector product algebra $V$ in any braided monoidal additive category $\CV$, the dimension $d=d_V$ satisfies the equation
\begin{eqnarray*}
d(d-1)(d-3)=0 
\end{eqnarray*}
in the endomorphism ring $\CV(I,I)$ of the tensor unit $I$.
\end{theorem}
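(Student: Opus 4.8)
\noindent\emph{Plan of proof.} The idea is to compute one morphism built from $\wedge$ in two different ways and then to close it up into a loop; comparing the two outcomes forces the cubic. All element-style expressions below are just shorthand for the evident string diagrams.

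First I would introduce the auxiliary morphism $\mu := (\wedge\ox 1_V)\circ(1_V\ox\cap)\colon V\to V\ox V$, where $\cap\colon I\to V\ox V$ is the unit of the symmetric self-duality; this is the ``turned-around'' companion of $\wedge$, the diagrammatic analogue of $v\mapsto\sum_i(v\wedge e_i)\ox e^i$, and like $\wedge$ it is antisymmetric with respect to the braiding. The first thing to prove is that $\wedge\circ\mu=(1-d)\,1_V$. This comes from plugging a $\cap$ into the two right-hand input strings of the associativity axiom \eqref{stringass} — precisely the sort of move used to obtain Corollaries~\ref{stringswingcor1} and~\ref{stringswingcor3}. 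The left-hand side then becomes $\wedge\circ\mu$, while on the right-hand side one term straightens into $1_V$ by a snake equation \eqref{radjobv} and the other becomes $1_V$ with a disjoint loop attached, i.e.\ $d\cdot 1_V$; hence $\wedge\circ\mu=1_V-d\,1_V$.

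The second thing to prove is $\wedge\circ\mu\circ\wedge=-2\wedge$. Here $\wedge\circ\mu\circ\wedge\colon V\ox V\to V$ is the diagram $(x,y)\mapsto\sum_i\bigl((x\wedge y)\wedge e_i\bigr)\wedge e^i$; applying \eqref{stringass} to the inner bracket, then the bilinearity \eqref{distribconds} of $\wedge$ and a snake equation, reduces it to $y\wedge x-x\wedge y$, which is $-2\wedge$ by antisymmetry \eqref{stringasym}. Since also $\wedge\circ\mu\circ\wedge=(\wedge\circ\mu)\circ\wedge=(1-d)\wedge$ by the first step, we get $(1-d)\wedge=-2\wedge$, i.e.\ $(d-3)\,\wedge=0_{V\ox V,V}$ in $\CV(V\ox V,V)$. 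For the conclusion I would close the loop: $d-3$ lies in the commutative ring $\CV(I,I)$ and acts by tensoring, and forming a loop $f\mapsto \cup\circ(1_V\ox f)\circ\cap$ is additive and scalar-linear by \eqref{distribconds}, so $(d-3)\,\mathrm{tr}(\wedge\circ\mu)=\mathrm{tr}\bigl((d-3)(\wedge\circ\mu)\bigr)=\mathrm{tr}\bigl(0_{V\ox V,V}\circ\mu\bigr)=0$. On the other hand $\mathrm{tr}(\wedge\circ\mu)=\mathrm{tr}\bigl((1-d)\,1_V\bigr)=(1-d)\,d$, the last equality being the definition of the dimension $d=d_V$. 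Therefore $(d-3)(1-d)\,d=0$, which is $d(d-1)(d-3)=0$.

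The part I expect to need the most care is the second step, together with the leg-bending bookkeeping in the first: in a genuinely braided (as opposed to symmetric) category, bending the legs of $\wedge$ makes $\mu\circ\wedge$ equal to $c_{V,V}^{\pm1}-1_{V\ox V}$ rather than a literal antisymmetriser, so one must use the geometry of braiding and the symmetry of the self-duality, and note that $\wedge\circ c_{V,V}=-\wedge$ (from \eqref{stringasym}) forces $\wedge\circ c_{V,V}^{-1}=-\wedge$ as well, in order to be sure that the snake simplifications preserve orientations. Everything else is formal once the two displayed identities are in hand. Note that associativity \eqref{stringass} is used essentially: for a general vpa only the weaker identity \eqref{stringstrange} is available, and one then expects the quartic $d(d-1)(d-3)(d-7)$, the extra factor $d-7$ accounting for the octonionic cross product.
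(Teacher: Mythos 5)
Your argument is correct, and it is a genuinely leaner organization of the paper's idea rather than a copy of it. The paper also compares two evaluations of a diagram built from two wedges and the duality, but it works throughout with the fully closed scalar \eqref{mickey} (essentially the trace of $(\wedge\circ\mu)^{2}$ in your notation): one chain of moves, applying \eqref{stringassalt} twice, reduces it to disjoint circles and gives $d(d-1)^{2}$, while a second, considerably more intricate chain (Proposition~\ref{stringswing}, \eqref{stringass}, \eqref{stringasym}, Proposition~\ref{Reide1}, snakes) gives $2d(d-1)$. You instead leave one pair of legs open, which yields the morphism identity $(d-3)\wedge=0_{V\ox V,V}$ --- a statement strictly stronger than anything the paper records --- and only then close up against $\mu$, so the final arithmetic is a one-line trace of $\wedge\circ\mu=(1-d)1_V$ rather than a comparison of two independently computed scalars. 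That first identity is the second equation of Proposition~\ref{evaluations}, but you derive it from \eqref{stringass} alone, with no cancellability of $2$, which is exactly what the hypotheses of this theorem allow (the paper proves it only later, for general vpas, from \eqref{stringstrange} under a $2$-cancellability assumption, and does not use it in this proof). Two points you rightly flag deserve the care you give them: the string axiom you need is $(x\wedge y)\wedge z=(x\bigcdot z)y-(y\bigcdot z)x$ as drawn in \eqref{stringass} (your ``disjoint loop'' term $d\cdot 1_V$ can only come from a $(y\bigcdot z)x$ summand, so the version of \eqref{fam} printed in the Remark, with second term $(x\bigcdot y)z$, would not do); and the kinks and crossings created by bending legs are disposed of by Proposition~\ref{Reide1}, the symmetry of the self-duality, and $\wedge\circ c_{V,V}^{\pm 1}=-\wedge$. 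With those checks in place the proof is complete.
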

\begin{proof}
We perform two string calculations each beginning with the following element of $\CV(I,I)$.
\begin{eqnarray}\label{mickey}
\begin{aligned}
\psscalebox{0.6 0.6} % Change this value to rescale the drawing.
{
% [inline block 2: 8 envs, 28053 chars -> data_tex | \begin{pspicture}(0,-1.8663183)(7.9645867,1.8663183) \psbezier[linecolor=black, linewidth=0.04](7.2213593,-0.42193824)(7...]

}
\end{aligned}
\end{eqnarray*}
so, using Proposition~\ref{Reide1} and geometry, we again obtain terms which consist of one and two disjoint circles.
This shows that \eqref{mickey} is equal to $2(-d + d^2)$.
The two calculations therefore imply
$d(d -1)^2 = 2d(d-1)$. So $0=d(d-1)(d-1-2)=d(d-1)(d-3)$ as required.
 \end{proof}
 \begin{proposition}\label{evaluations}
Let $V$ be any vector product algebra in an additive braided monoidal category $\CV$ such that $2$ is cancellable in the abelian group $\CV(I,V)$. 
Then the following three equations hold.
\begin{eqnarray*}
\begin{aligned}
\psscalebox{0.6 0.6} % Change this value to rescale the drawing.
{
\begin{pspicture}(0,-1.41)(20.22,1.41)
\psbezier[linecolor=black, linewidth=0.04](15.74007,0.3559871)(16.189953,1.6668495)(14.2524805,1.7574463)(14.61993,0.3840128965868189)
\psbezier[linecolor=black, linewidth=0.04](15.82007,-0.1440129)(15.717081,1.1868495)(14.622843,1.2174462)(14.53993,-0.11598710341318111)
\pscircle[linecolor=black, linewidth=0.04, dimen=outer](7.7,0.37){0.64}
\psline[linecolor=black, linewidth=0.04](8.3,0.53)(8.84,1.39)(8.86,1.39)
\psline[linecolor=black, linewidth=0.04](7.68,-0.27)(7.68,-1.37)(7.68,-1.39)
\rput[bl](8.96,0.15){\LARGE{$=$}}
\psline[linecolor=black, linewidth=0.04](11.82,1.39)(11.82,-1.41)
\psbezier[linecolor=black, linewidth=0.04](14.52,-0.05)(14.52,-0.85)(15.8,-0.89)(15.8,-0.09)
\rput[bl](16.18,0.23){\LARGE{$=$}}
\rput[bl](17.3,0.17){\Large{$(1-d)d$}}
\rput[bl](9.74,0.07){\Large{$(1-d)$}}
\pscircle[linecolor=black, linewidth=0.04, dimen=outer](0.64,0.39){0.64}
\rput[bl](2.76,0.27){\Large{$0$}}
\rput[bl](1.66,0.31){\LARGE{$=$}}
\psline[linecolor=black, linewidth=0.04](0.64,-0.25)(0.62,-1.13)
\end{pspicture}
}
\end{aligned}
\end{eqnarray*}
\end{proposition}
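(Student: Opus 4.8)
The plan is to prove the three equations in the stated order, each feeding into the next; the braiding causes little trouble because $V$ is a \emph{symmetric} self-dual object, so a crossing sitting next to a cup or cap can be absorbed using Proposition~\ref{Reide1} and the geometry of braiding.

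The first equation asserts that $\wedge$ precomposed with the unit $\eta$ (depicted $\cap$) of the self-duality is the zero morphism $I\to V$. Antisymmetry \eqref{stringasym} gives $\wedge\circ c_{V,V}=-\wedge$, and since the self-duality is symmetric we have $c_{V,V}\circ\eta=\eta$ (a consequence of the symmetry of $\cup$ recorded in the definition of symmetric self-duality, which rests on Proposition~\ref{symadj}). Composing, $\wedge\circ\eta=-\wedge\circ\eta$, so $2\,(\wedge\circ\eta)=0$ in $\CV(I,V)$; as $2$ is cancellable there, $\wedge\circ\eta=0$. This is the only place the cancellability hypothesis is used.

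For the second equation, feed the unit $\eta$ into two input legs of \eqref{stringstrange} so that its left-hand side becomes $\wedge\circ(\wedge\otimes 1_V)\circ(1_V\otimes\eta)+\wedge\circ(1_V\otimes(\wedge\circ\eta))$, an endomorphism of $V$. The second summand vanishes by the first equation together with \eqref{distribconds}. On the right-hand side the two terms coming from $2(x\bigcdot z)y$ and $(x\bigcdot y)z$ each straighten, via the snake equations \eqref{radjobv} and the symmetry of $\cup$, to $1_V$, while the term coming from $(y\bigcdot z)x$ produces a disjoint circle and so contributes $d\cdot 1_V$. Hence the endomorphism equals $2\cdot 1_V-1_V-d\cdot 1_V=(1-d)\,1_V$, which is the second equation. (These are manipulations of the kind already packaged in Proposition~\ref{stringswing} and Corollaries~\ref{stringswingcor1}--\ref{stringswingcor4}; alternatively, attach a $\cup$ to the output of \eqref{stringstrange} first and finish with non-degeneracy of $\cup$.)

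For the third equation, close the $V$-strand of the second equation into a loop: compose with $\eta$ on the source and $\cup$ on the target, straightening the resulting crossing by Proposition~\ref{Reide1}. The left-hand side becomes the displayed theta-shaped endomorphism of $I$, and the right-hand side becomes $(1-d)\,(\cup\circ\eta)=(1-d)\,d$ by the definition of the dimension $d$; equivalently this is \eqref{stringstrange} contracted with $\eta$ in both remaining pairs of legs. The genuinely delicate point throughout is the braiding bookkeeping in the second equation: after inserting $\eta$ into \eqref{stringstrange} the strands from the right-hand terms are linked with the output strand, and one must verify carefully --- via Proposition~\ref{stringswing}, its corollaries, the snake equations, and the symmetry of $\cup$ --- that each term unravels to exactly the claimed multiple of $1_V$, with no stray twist or sign. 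Once that is settled, the first and third equations are immediate.
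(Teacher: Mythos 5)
Your proof is correct and follows essentially the same route as the paper: antisymmetry of $\wedge$ plus symmetry of the duality forces $2(\wedge\circ\eta)=0$ and the $2$ is cancelled, the second equation comes from contracting two adjacent inputs of \eqref{stringstrange} with a cap and evaluating the right-hand side as $(2-1-d)\,1_V$ after the first equation kills one left-hand term, and the third equation is the trace of the second. The only differences are organisational: the paper packages the contraction step as Corollary~\ref{stringswingcor3} (built from Corollary~\ref{stringswingcor1}) rather than doing it in one stroke, and it invokes Proposition~\ref{Reide1} explicitly to remove the kink in the first right-hand term --- exactly the braiding bookkeeping you flag and resolve with the same tools.
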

\begin{proof}
Using the asymmetry \eqref{stringasym} of $\wedge$ and the symmetry of $\bigcdot$, we obtain
\begin{eqnarray*}
\begin{aligned}
\psscalebox{0.6 0.6} % Change this value to rescale the drawing.
{
\begin{pspicture}(0,-1.4033492)(7.28,1.4033492)
\pscircle[linecolor=black, linewidth=0.04, dimen=outer](1.26,0.71717244){0.52}
\psline[linecolor=black, linewidth=0.04](1.24,0.23717242)(1.2,-1.3428276)
\psbezier[linecolor=black, linewidth=0.04](4.7198744,0.7649961)(4.7363844,1.5648258)(3.5566359,1.5891783)(3.5401256,0.7893487284790911)
\psline[linecolor=black, linewidth=0.04](3.52,0.7971724)(4.5,-0.022827568)(4.1,-0.36282757)(4.1,-1.3828275)
\psline[linecolor=black, linewidth=0.04](4.72,0.77717245)(4.08,0.47717243)
\psline[linecolor=black, linewidth=0.04](3.84,0.37717244)(3.38,0.057172433)(4.1,-0.38282758)
\pscircle[linecolor=black, linewidth=0.04, dimen=outer](6.76,0.65717244){0.52}
\psline[linecolor=black, linewidth=0.04](6.74,0.17717244)(6.7,-1.4028276)
\rput[bl](5.0,-0.08282757){\LARGE{$=$}}
\rput[bl](2.08,-0.04282757){\LARGE{$=$}}
\rput[bl](0.0,-0.022827568){\Large{$-$}}
\end{pspicture}
}
\end{aligned}
\end{eqnarray*}
which proves the first equation after transposing and cancelling a $2$.

For the second equation, contract Corollary~\ref{stringswingcor3} on the left to obtain:
\begin{eqnarray*}
\begin{aligned}
\psscalebox{0.6 0.6} % Change this value to rescale the drawing.
{
\begin{pspicture}(0,-1.8202301)(18.12,1.8202301)
\psbezier[linecolor=black, linewidth=0.04](1.9796251,-0.7139928)(2.0231707,0.084821194)(0.7839205,0.13234662)(0.74037486,-0.6664673689827475)
\psbezier[linecolor=black, linewidth=0.04](12.3,0.33976993)(12.3,-0.46023008)(13.54,-0.44023007)(13.54,0.3597699213027954)
\psline[linecolor=black, linewidth=0.04](1.9,-0.40023008)(2.36,0.3597699)(1.78,0.97976995)
\psline[linecolor=black, linewidth=0.04](2.36,0.37976992)(3.0,0.9997699)
\psbezier[linecolor=black, linewidth=0.04](6.079625,-0.03399279)(6.123171,0.7648212)(4.8839207,0.81234664)(4.840375,0.01353263101725247)
\psbezier[linecolor=black, linewidth=0.04](15.099626,-0.17399278)(15.14317,0.6248212)(13.90392,0.6723466)(13.860374,-0.12646736898274752)
\psline[linecolor=black, linewidth=0.04](6.06,0.0)(6.06,-0.70023006)
\psline[linecolor=black, linewidth=0.04](4.84,0.05976992)(4.86,-0.6802301)
\psline[linecolor=black, linewidth=0.04](5.98,0.33976993)(6.62,1.07977)
\psline[linecolor=black, linewidth=0.04](4.94,0.31976992)(4.34,0.97976995)
\psline[linecolor=black, linewidth=0.04](9.26,1.0597699)(10.6,-0.8002301)
\psline[linecolor=black, linewidth=0.04](10.58,1.0397699)(10.1,0.21976992)
\psline[linecolor=black, linewidth=0.04](9.86,-0.100230075)(9.28,-0.8002301)
\psline[linecolor=black, linewidth=0.04](13.54,0.27976993)(13.54,1.0597699)
\psline[linecolor=black, linewidth=0.04](12.3,0.2997699)(12.28,1.0397699)
\psline[linecolor=black, linewidth=0.04](13.86,-0.12023008)(13.86,-0.82023007)
\psline[linecolor=black, linewidth=0.04](15.08,-0.16023009)(15.08,-0.8402301)
\psline[linecolor=black, linewidth=0.04](17.0,1.0597699)(17.0,-0.82023007)
\psline[linecolor=black, linewidth=0.04](18.1,1.6797699)(18.1,-1.8202301)
\rput[bl](6.66,-0.02023008){\LARGE{$=$}}
\rput[bl](7.88,0.019769922){\Large{$2$}}
\rput[bl](2.86,-0.04023008){\Large{$+$}}
\rput[bl](15.56,-0.02023008){\Large{$-$}}
\rput[bl](10.68,0.019769922){\Large{$-$}}
\psbezier[linecolor=black, linewidth=0.04](0.02,-0.52023005)(0.02,-1.3202301)(0.74,-1.3402301)(0.74,-0.5402300786972046)
\psbezier[linecolor=black, linewidth=0.04](4.02,-0.64023006)(4.02,-1.4402301)(4.86,-1.4602301)(4.86,-0.6602300786972046)
\psbezier[linecolor=black, linewidth=0.04](8.58,-0.7402301)(8.58,-1.54023)(9.3,-1.5602301)(9.3,-0.7602300786972046)
\psbezier[linecolor=black, linewidth=0.04](11.66,-0.7402301)(11.66,-1.6202301)(13.88,-1.5602301)(13.88,-0.7602300786972046)
\psbezier[linecolor=black, linewidth=0.04](16.28,-0.6802301)(16.28,-1.4802301)(17.0,-1.5002301)(17.0,-0.7002300786972045)
\psbezier[linecolor=black, linewidth=0.04](17.000084,0.95642525)(16.959738,1.7554072)(16.29957,1.7420965)(16.339916,0.9431146039997009)
\psbezier[linecolor=black, linewidth=0.04](12.280085,0.9164252)(12.239738,1.7154073)(11.57957,1.7020966)(11.619916,0.9031146039997009)
\psbezier[linecolor=black, linewidth=0.04](9.372888,0.90114295)(8.931318,1.5699941)(8.559252,1.487248)(8.587112,0.6583968859710149)
\psbezier[linecolor=black, linewidth=0.04](4.4045243,0.8683374)(4.0390553,1.579978)(3.0900066,1.2028431)(3.4554756,0.49120243407398223)
\psbezier[linecolor=black, linewidth=0.04](1.7800844,0.9364252)(1.6749576,1.7354072)(-0.04521111,1.7220966)(0.059915606,0.9231146039997009)
\psline[linecolor=black, linewidth=0.04](3.44,0.49976993)(4.04,-0.70023006)(4.04,-0.70023006)
\psline[linecolor=black, linewidth=0.04](8.6,0.67976993)(8.58,-0.8002301)
\psline[linecolor=black, linewidth=0.04](11.6,0.9597699)(11.68,-0.8402301)
\psline[linecolor=black, linewidth=0.04](16.32,0.9597699)(16.28,-0.76023006)
\psline[linecolor=black, linewidth=0.04](0.04,0.9397699)(0.02,-0.6002301)
\psline[linecolor=black, linewidth=0.04](1.96,-0.6802301)(1.94,-1.42023)
\psline[linecolor=black, linewidth=0.04](6.06,-0.64023006)(6.06,-1.40023)
\psline[linecolor=black, linewidth=0.04](10.58,-0.76023006)(10.56,-1.54023)
\psline[linecolor=black, linewidth=0.04](15.08,-0.7802301)(15.04,-1.6402301)
\psline[linecolor=black, linewidth=0.04](2.98,0.97976995)(3.0,1.81977)
\psline[linecolor=black, linewidth=0.04](6.6,1.0597699)(6.6,1.7597699)
\psline[linecolor=black, linewidth=0.04](10.56,0.9997699)(10.54,1.71977)
\psline[linecolor=black, linewidth=0.04](13.54,1.0397699)(13.56,1.6797699)
\end{pspicture}
}
\end{aligned}
\end{eqnarray*}
in which the second term on the left is $0$ using Corollary~\ref{stringswingcor1}, the first term on the right is $2 \ 1_V$ by Proposition~\ref{Reide1}, the second term on the right is $1_V$ using the snake identities, and the final term is $d 1_V$. 
This proves the second equation of the proposition.

Using Corollary~\ref{stringswingcor1}, the left-hand side of the third equation is equal to the left-hand side of the equation:
\begin{eqnarray*}
\begin{aligned}
\psscalebox{0.6 0.6} % Change this value to rescale the drawing.
{
\begin{pspicture}(0,-0.8844291)(6.32,0.8844291)
\psellipse[linecolor=black, linewidth=0.04, dimen=outer](0.5,-0.03183418)(0.5,0.66)
\psbezier[linecolor=black, linewidth=0.04](0.9908724,-0.17999357)(1.4827979,0.33411667)(1.321053,0.95043546)(0.8291276,0.43632520295189353)
\psbezier[linecolor=black, linewidth=0.04](3.760016,0.22744872)(3.7613502,1.0274476)(2.901318,1.0088818)(2.899984,0.20888291865225198)
\psbezier[linecolor=black, linewidth=0.04](3.28,-0.25183418)(3.28,-1.0518342)(4.36,-1.0318341)(4.36,-0.23183418273925782)
\psline[linecolor=black, linewidth=0.04](2.88,0.22816582)(3.28,-0.3118342)(3.76,0.24816582)
\psbezier[linecolor=black, linewidth=0.04](5.220016,-0.41255128)(5.22135,0.3874476)(4.361318,0.3688818)(4.359984,-0.431117081347748)
\psbezier[linecolor=black, linewidth=0.04](5.22,-0.2718342)(5.22,-1.0718342)(6.3,-1.0518342)(6.3,-0.25183418273925784)
\psbezier[linecolor=black, linewidth=0.04](6.2997704,0.283227)(6.28829,1.0831584)(3.7687492,1.033036)(3.7802296,0.2331046429836715)
\psline[linecolor=black, linewidth=0.04](6.28,0.32816583)(6.3,-0.39183417)
\rput[bl](1.98,-0.09183418){\LARGE{$=$}}
\end{pspicture}
}
\end{aligned}
\end{eqnarray*}
which is true by Proposition~\ref{stringswing}. The second equation of the proposition now gives the third equation.
\end{proof}
 \begin{proposition}[Tonny Albert Springer]\label{Springer}
For any vector product algebra $V$ with $2$ cancellable in $\CV(I,V)$,
\begin{eqnarray}
\begin{aligned}
\psscalebox{0.6 0.6} % Change this value to rescale the drawing.
{
\begin{pspicture}(0,-1.9542617)(7.8361726,1.9542617)
\psline[linecolor=black, linewidth=0.04](0.018092956,1.9457383)(1.318093,-0.81426173)(1.318093,-1.9542618)
\psline[linecolor=black, linewidth=0.04](1.3380929,-0.8342617)(2.938093,1.9257382)
\psbezier[linecolor=black, linewidth=0.04](2.1178255,0.41490528)(2.1008785,1.1339314)(0.75092506,1.2555974)(0.7183603,0.45657123988963577)
\rput[bl](3.2780929,0.20573826){\LARGE{$=$}}
\rput[bl](4.238093,0.16573825){\Large{$(d-4)$}}
\psline[linecolor=black, linewidth=0.04](5.958093,1.7257383)(6.958093,-0.09426174)(6.978093,-1.8142618)
\psline[linecolor=black, linewidth=0.04](6.958093,-0.09426174)(7.818093,1.7257383)
\end{pspicture}
}
\end{aligned}
\end{eqnarray}
\end{proposition}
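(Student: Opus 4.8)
The plan is to read the left-hand picture as a capped instance of the identity \eqref{stringstrange} and then to evaluate the four resulting terms using the loop evaluations of Proposition~\ref{evaluations} together with the snake equations. Reading the left-hand diagram from the top: the cap $\cap$ has two legs; the left leg enters the right input of a first $\wedge$ whose left input is the left boundary wire; the right leg enters the left input of a second $\wedge$ whose right input is the right boundary wire; finally the outputs of these two $\wedge$'s enter a third $\wedge$ whose output is the bottom wire. Writing $x,y$ for the top wires and treating $\cap$ as producing $\sum_i e_i\otimes e_i$, this diagram is $\sum_i(x\wedge e_i)\wedge(e_i\wedge y)$. The point is that the subexpression $(x\wedge e_i)\wedge(e_i\wedge y)$ is the second summand $a\wedge(b\wedge c)$ on the left of \eqref{stringstrange} for $a=x\wedge e_i$, $b=e_i$, $c=y$, with the copy of $e_i$ occurring inside $a$ tied back to $b$ by the cap.

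First I would apply \eqref{stringstrange} in the rearranged form $a\wedge(b\wedge c)=-(a\wedge b)\wedge c+2(a\bigcdot c)b-(a\bigcdot b)c-(b\bigcdot c)a$ (the capped identity in Corollary~\ref{stringswingcor3} is of this shape) and then distribute the cap over the four terms. This rewrites the left-hand side as the sum of (i) $-\sum_i\bigl((x\wedge e_i)\wedge e_i\bigr)\wedge y$, (ii) $+2\sum_i\bigl((x\wedge e_i)\bigcdot y\bigr)e_i$, (iii) $-\sum_i\bigl((x\wedge e_i)\bigcdot e_i\bigr)y$, and (iv) $-\sum_i(e_i\bigcdot y)(x\wedge e_i)$.

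Then I would evaluate the four terms. Term (iv): the snake equation \eqref{radjobv} gives $\sum_i(e_i\bigcdot y)e_i=y$, so term (iv) $=-x\wedge\bigl(\sum_i(e_i\bigcdot y)e_i\bigr)=-(x\wedge y)$. Term (ii): cyclic symmetry \eqref{stringcycsym} turns $(x\wedge e_i)\bigcdot y$ into $(y\wedge x)\bigcdot e_i$, the snake equation collapses the sum to $y\wedge x$, and \eqref{stringasym} gives $y\wedge x=-(x\wedge y)$, so term (ii) $=-2(x\wedge y)$. Term (iii) is the vanishing loop of the first equation of Proposition~\ref{evaluations} (it also follows directly from \eqref{stringasym}, symmetry of $\bigcdot$ and cancellability of $2$), so term (iii) $=0$. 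Term (i): the loop $\sum_i(x\wedge e_i)\wedge e_i$ equals $(1-d)x$ — this is the second equation of Proposition~\ref{evaluations}, possibly after one application of \eqref{stringasym} to match the order of inputs of the inner $\wedge$ — whence term (i) $=-(1-d)(x\wedge y)=(d-1)(x\wedge y)$. Adding the coefficients, $(d-1)-2+0-1=d-4$, which is the asserted equation.

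I expect the main obstacle to be the diagrammatic bookkeeping in the first two steps: one must feed the correct boundary wire into the correct slot of each $\wedge$ so that the picture genuinely matches the chosen instance of \eqref{stringstrange}, since every transposed input costs a sign through \eqref{stringasym}; and one must recognise the post-capping third term of \eqref{stringstrange} as precisely the $(1-d)$-loop of Proposition~\ref{evaluations} and not a sign-flipped variant. Once the diagram is correctly identified, the evaluation of the four terms is routine given the earlier results, and $2$-cancellability is used only to kill term (iii) (and, implicitly, inside Proposition~\ref{evaluations}).
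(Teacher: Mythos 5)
Your proof is correct and follows essentially the same route as the paper: both contract the axiom \eqref{stringstrange} against a cap so that the target diagram appears as one of the two left-hand terms, and then evaluate the remaining four terms via Proposition~\ref{evaluations}, the snake equations, cyclic symmetry and antisymmetry, arriving at the coefficient $(d-1)-2+0-1=d-4$. The only (immaterial) difference is a mirror-image choice: the paper realizes the left-hand side as the first summand $(a\wedge b)\wedge c$ with $a=x$, $b=e_i$, $c=e_i\wedge y$, whereas you realize it as the second summand $a\wedge(b\wedge c)$ with $a=x\wedge e_i$, $b=e_i$, $c=y$.
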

\begin{proof}
From \eqref{stringstrange}, we obtain
\begin{eqnarray}\label{stringstrangecontract1}
\begin{aligned}
\psscalebox{0.6 0.6} % Change this value to rescale the drawing.
{
\begin{pspicture}(0,-1.5547639)(19.312122,1.5547639)
\psline[linecolor=black, linewidth=0.04](0.018221512,1.545635)(1.3782215,-1.454365)
\psline[linecolor=black, linewidth=0.04](0.8382215,-0.274365)(1.4782215,0.465635)
\psline[linecolor=black, linewidth=0.04](1.1182215,-0.894365)(2.2982216,0.465635)(2.2982216,0.48563498)
\psbezier[linecolor=black, linewidth=0.04](2.3353214,0.43886647)(2.257334,0.9353366)(1.383134,0.86887366)(1.4611216,0.37240352634627927)
\psbezier[linecolor=black, linewidth=0.04](5.8353214,0.71886647)(5.757334,1.2153366)(4.883134,1.1488737)(4.9611216,0.6524035263462793)
\psline[linecolor=black, linewidth=0.04](2.1582215,0.705635)(2.8382215,1.505635)
\psline[linecolor=black, linewidth=0.04](4.0782213,1.505635)(5.1182213,-1.454365)
\psline[linecolor=black, linewidth=0.04](4.7182217,-0.354365)(5.3182216,0.145635)(4.9382215,0.745635)
\psline[linecolor=black, linewidth=0.04](5.2782216,0.105634995)(5.8382215,0.705635)
\psline[linecolor=black, linewidth=0.04](5.6982217,0.945635)(6.2982216,1.525635)
\psbezier[linecolor=black, linewidth=0.04](11.215322,0.65886647)(11.137334,1.1553366)(10.263134,1.0888736)(10.341122,0.5924035263462792)
\psline[linecolor=black, linewidth=0.04](11.098222,0.825635)(11.698221,1.405635)
\psbezier[linecolor=black, linewidth=0.04](14.975322,0.59886646)(14.897334,1.0953366)(14.023134,1.0288737)(14.101122,0.5324035263462793)
\psline[linecolor=black, linewidth=0.04](14.858221,0.765635)(15.458221,1.3456349)
\psbezier[linecolor=black, linewidth=0.04](18.815321,0.6388665)(18.737333,1.1353366)(17.863134,1.0688736)(17.941122,0.5724035263462792)
\psline[linecolor=black, linewidth=0.04](18.698221,0.805635)(19.298222,1.385635)
\psline[linecolor=black, linewidth=0.04](10.338222,0.605635)(9.3782215,-1.414365)
\pscustom[linecolor=black, linewidth=0.04]
{
\newpath
\moveto(11.218222,0.685635)
\lineto(11.168221,0.26563498)
\curveto(11.143222,0.055634994)(11.088222,-0.254365)(11.058222,-0.354365)
\curveto(11.028221,-0.45436502)(10.963222,-0.579365)(10.928222,-0.604365)
\curveto(10.893222,-0.629365)(10.673222,-0.654365)(10.488221,-0.654365)
\curveto(10.303222,-0.654365)(10.093222,-0.564365)(10.018222,-0.29436502)
}
\psline[linecolor=black, linewidth=0.04](9.8782215,0.065634996)(9.118221,1.545635)
\psline[linecolor=black, linewidth=0.04](14.958221,0.605635)(14.918221,-1.474365)
\psbezier[linecolor=black, linewidth=0.04](13.298222,0.685635)(13.298222,-0.114365004)(14.118221,-0.114365004)(14.118221,0.685634994506836)
\psbezier[linecolor=black, linewidth=0.04](17.958221,0.585635)(17.998222,-0.754365)(18.798222,-0.654365)(18.798222,0.685634994506836)
\psline[linecolor=black, linewidth=0.04](17.398222,1.525635)(17.458221,-1.554365)
\psline[linecolor=black, linewidth=0.04](13.298222,0.62563497)(13.258222,1.465635)
\rput[bl](3.1182215,0.08563499){\LARGE{$+$}}
\rput[bl](8.398222,0.045634996){\Large{$2$}}
\rput[bl](12.218222,0.065634996){\Large{$-$}}
\rput[bl](6.6982217,0.045634996){\LARGE{$=$}}
\rput[bl](15.838222,0.065634996){\Large{$-$}}
\end{pspicture}
}
\end{aligned}
\end{eqnarray}
The first term on the left is the left-hand side $\ell$ (say) of the equation in the proposition.
Proposition~\ref{evaluations} applies to the second term on the left
yielding $(1-d) \ \wedge$.
Applying Corollary~\ref{stringswingcor1} to the first term on the right-hand side and using some geometry, we obtain the left-hand side of
\begin{eqnarray*}
\begin{aligned}
\psscalebox{0.6 0.6} % Change this value to rescale the drawing.
{
\begin{pspicture}(0,-1.549658)(6.24,1.549658)
\pscustom[linecolor=black, linewidth=0.04]
{
\newpath
\moveto(1.26,-1.5456058)
\lineto(1.32,-1.2556058)
\curveto(1.35,-1.1106058)(1.425,-0.8856058)(1.47,-0.8056058)
\curveto(1.515,-0.7256058)(1.595,-0.5306058)(1.63,-0.4156058)
\curveto(1.665,-0.3006058)(1.74,-0.1256058)(1.78,-0.06560581)
\curveto(1.82,-0.005605812)(1.915,0.094394185)(1.97,0.13439418)
\curveto(2.025,0.17439419)(2.235,0.22439419)(2.39,0.2343942)
\curveto(2.545,0.24439418)(2.765,0.1543942)(2.83,0.05439419)
\curveto(2.895,-0.045605812)(2.96,-0.24560581)(2.96,-0.34560582)
\curveto(2.96,-0.4456058)(2.715,-0.5656058)(2.47,-0.5856058)
\curveto(2.225,-0.60560584)(1.95,-0.5756058)(1.92,-0.5256058)
\curveto(1.89,-0.47560582)(1.845,-0.3656058)(1.8,-0.18560581)
}
\psline[linecolor=black, linewidth=0.04](1.04,1.3343942)(1.4,-0.92560583)
\psline[linecolor=black, linewidth=0.04](1.68,0.07439419)(1.24,0.6943942)
\psline[linecolor=black, linewidth=0.04](0.94,0.8743942)(0.32,1.3343942)
\rput[bl](0.0,-0.06560581){\Large{$2$}}
\rput[bl](3.38,-0.025605813){\LARGE{$=$}}
\rput[bl](4.2,-0.045605812){\Large{$2$}}
\psline[linecolor=black, linewidth=0.04](4.98,0.5543942)(5.42,-0.5656058)
\psline[linecolor=black, linewidth=0.04](5.94,0.5543942)(5.42,-0.5856058)(5.44,-1.4856058)
\psline[linecolor=black, linewidth=0.04](5.0,0.5543942)(6.04,1.5143942)
\psline[linecolor=black, linewidth=0.04](5.92,0.5343942)(5.58,0.89439416)
\psline[linecolor=black, linewidth=0.04](5.32,1.0943942)(4.8,1.5343941)
\end{pspicture}
}
\end{aligned}
\end{eqnarray*}
which is equal to the right-hand side by Proposition~\ref{Reide1}, and this is equal to $-2 \ \wedge$ by \eqref{stringasym}.
The third term on the right-hand side of \eqref{stringstrangecontract1} is equal to $- \ \wedge$ by a snake equation. 
The fourth term of the right-hand side of \eqref{stringstrangecontract1} is $0$ as an easy consequence of the first identity of Proposition~\ref{evaluations}.

This leaves us with the equation
\begin{eqnarray*}
\ell + (1-d) \ \wedge = -2 \wedge \ - \ \wedge \ ,
\end{eqnarray*}
proving that $\ell = (d-4) \ \wedge$, as required.
\end{proof}
 \begin{proposition}\label{enabler}
For any vector product algebra $V$ and any morphism $$f : V\ox V\ox V\to X \ ,$$
the following identity holds.
\begin{eqnarray}
\begin{aligned}
\psscalebox{0.6 0.6} % Change this value to rescale the drawing.
{
\begin{pspicture}(0,-2.1027567)(8.505272,2.1027567)
\pscircle[linecolor=black, linewidth=0.04, dimen=outer](1.0652722,-0.12226591){0.38}
\psline[linecolor=black, linewidth=0.04](1.0452722,-0.48226592)(1.0052723,-2.1022658)
\psbezier[linecolor=black, linewidth=0.04](2.0249014,1.4649657)(2.0031161,2.2646692)(0.003857733,2.2102058)(0.025643067,1.4105024208106727)
\psbezier[linecolor=black, linewidth=0.04](1.7228602,0.7720561)(1.6709899,1.5703727)(1.0158138,0.98172873)(1.0676842,0.1834120855402034)
\psline[linecolor=black, linewidth=0.04](2.0252721,1.4777341)(1.3652722,0.037734088)
\psline[linecolor=black, linewidth=0.04](0.025272217,1.4777341)(0.7452722,-0.022265911)
\rput[bl](3.1852722,-0.18226591){\LARGE{$=$}}
\rput[bl](6.5452724,-0.50226593){\Large{$f$}}
\rput[bl](0.9452722,-0.3422659){\Large{$f$}}
\pscircle[linecolor=black, linewidth=0.04, dimen=outer](6.705272,-0.2822659){0.4}
\pscustom[linecolor=black, linewidth=0.04]
{
\newpath
\moveto(7.0652723,-0.12226591)
\lineto(7.0252724,0.10773409)
\curveto(7.0052724,0.2227341)(6.955272,0.3877341)(6.925272,0.4377341)
\curveto(6.8952723,0.48773408)(6.8202724,0.5727341)(6.7752724,0.6077341)
\curveto(6.7302723,0.6427341)(6.575272,0.7127341)(6.4652724,0.74773407)
\curveto(6.3552723,0.7827341)(6.140272,0.8577341)(6.035272,0.8977341)
\curveto(5.930272,0.93773407)(5.7352724,1.012734)(5.6452723,1.0477341)
\curveto(5.555272,1.0827341)(5.345272,1.1777341)(5.225272,1.2377341)
\curveto(5.1052723,1.2977341)(4.9852724,1.4477341)(4.9852724,1.537734)
\curveto(4.9852724,1.6277341)(5.015272,1.787734)(5.0452724,1.8577341)
\curveto(5.075272,1.9277341)(5.385272,2.0227342)(5.665272,2.047734)
\curveto(5.9452724,2.072734)(6.5202723,2.082734)(6.8152723,2.067734)
\curveto(7.1102724,2.0527341)(7.515272,2.0027342)(7.6252723,1.9677341)
\curveto(7.7352724,1.9327341)(7.8802724,1.8627341)(7.915272,1.8277341)
\curveto(7.950272,1.7927341)(7.9852724,1.6177341)(7.9852724,1.4777341)
\curveto(7.9852724,1.3377341)(7.9602723,1.1327341)(7.935272,1.0677341)
\curveto(7.910272,1.0027341)(7.8552723,0.9077341)(7.825272,0.87773407)
\curveto(7.7952724,0.8477341)(7.705272,0.7877341)(7.6452723,0.75773406)
\curveto(7.5852723,0.7277341)(7.4802723,0.6677341)(7.435272,0.6377341)
\curveto(7.390272,0.6077341)(7.285272,0.5527341)(7.225272,0.5277341)
\curveto(7.165272,0.50273407)(7.100272,0.4727341)(7.0852723,0.45773408)
}
\psline[linecolor=black, linewidth=0.04](6.785272,0.35773408)(6.5052724,0.037734088)
\psbezier[linecolor=black, linewidth=0.04](6.0252185,1.0327525)(6.0425453,1.832565)(5.582653,1.882528)(5.565326,1.0827156190186216)
\pscustom[linecolor=black, linewidth=0.04]
{
\newpath
\moveto(6.0252724,0.7777341)
\lineto(6.055272,0.45773408)
\curveto(6.0702724,0.29773408)(6.1102724,0.07773409)(6.135272,0.017734088)
\curveto(6.160272,-0.04226591)(6.220272,-0.1372659)(6.325272,-0.24226591)
}
\psline[linecolor=black, linewidth=0.04](6.705272,-0.6622659)(6.705272,-2.002266)
\end{pspicture}
}
\end{aligned}
\end{eqnarray}
\end{proposition}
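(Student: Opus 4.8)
The plan is to use that both sides of the asserted identity are obtained from the single (arbitrary) morphism $f$ by precomposing with morphisms into $V\ox V\ox V$ built only out of the self-duality data ($\cup$ and $\cap$), the wedge, and the braiding. Hence the identity is natural in $f$, and it suffices to prove the corresponding identity between the two ``plugging in'' morphisms; equivalently one may take $f=1_{V\ox V\ox V}$ and verify an identity among string diagrams that no longer mention a black box. I would keep $f$ drawn purely for readability, but treat it as inert.

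First I would use the snake equations \eqref{radjobv} to straighten the loop appearing on the right-hand side, reducing each side to a copy of the wedge whose legs have been bent by $\cup$'s and $\cap$'s and then glued to two of the three inputs of $f$. Next I would invoke the ``swing'' lemma, Proposition~\ref{stringswing}, together with its corollaries (notably Corollary~\ref{stringswingcor1}): these are exactly the moves that pull a wedge through a cap and thereby convert one bent form of $\wedge$ into another. The alternating axioms then do the substantive work -- cyclic symmetry \eqref{stringcycsym} transports the bent leg of the wedge from one pair of $f$'s inputs to another (the diagrammatic incarnation of the cyclic symmetry of $(x\wedge y)\bigcdot z$), and antisymmetry \eqref{stringasym} supplies any resulting sign. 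A final appeal to the geometry of braiding and to Proposition~\ref{Reide1} slides the remaining crossings into the position shown on the right-hand side, and \eqref{distribconds} handles the bookkeeping of sums.

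The step I expect to be the main obstacle is control of the braiding: because $\CV$ is only braided and not symmetric, each time the wedge is swung past a $\cap$ or dragged across a strand one must commit to the correct over/under crossing, and the swing lemma and its corollaries have to be applied in precisely the right order so that the crossings that are created can afterwards be removed by Proposition~\ref{Reide1} (or absorbed into the target picture) instead of piling up. Landing the wedge on the correct pair of $f$'s legs with the correct crossing -- rather than its mirror image -- is the delicate point; once the routing is pinned down, what remains is routine manipulation with the snake equations and \eqref{distribconds}.
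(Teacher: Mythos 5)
Your toolkit and strategy coincide with the paper's: treat $f$ as inert and manipulate only the plugged-in morphism, using Corollary~\ref{stringswingcor1}, the geometry of braiding, and Proposition~\ref{Reide1}. The one point you leave open --- landing the wedge on the correct pair of legs with the correct crossing rather than its mirror image --- is exactly what Corollary~\ref{stringswingcor2} settles, and that is how the paper closes the argument: apply Corollary~\ref{stringswingcor1} and braiding geometry to the right-hand side, simplify with Proposition~\ref{Reide1}, and finish by Corollary~\ref{stringswingcor2}; so you should invoke that corollary explicitly rather than re-deriving it. Note also that no signs or sums arise here, so \eqref{stringasym}, \eqref{stringcycsym} and \eqref{distribconds} are not needed directly --- their content is already packaged into the swing lemma and its corollaries.
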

\begin{proof}
Applying Corollary~\ref{stringswingcor1} and geometry of braiding to the right-hand side yields the left-hand side of:
\begin{eqnarray*}
\begin{aligned}
\psscalebox{0.6 0.6} % Change this value to rescale the drawing.
{
\begin{pspicture}(0,-2.1437285)(9.123686,2.1437285)
\pscircle[linecolor=black, linewidth=0.04, dimen=outer](1.6836863,-0.16323775){0.38}
\psline[linecolor=black, linewidth=0.04](1.6636863,-0.52323776)(1.6236862,-2.1432378)
\psbezier[linecolor=black, linewidth=0.04](0.84331536,1.543994)(0.8274143,2.3436973)(0.008155957,2.289234)(0.02405707,1.489530588535281)
\rput[bl](3.8036861,-0.22323774){\LARGE{$=$}}
\rput[bl](7.1636863,-0.54323775){\Large{$f$}}
\rput[bl](1.5636863,-0.38323775){\Large{$f$}}
\pscircle[linecolor=black, linewidth=0.04, dimen=outer](7.323686,-0.32323775){0.4}
\psbezier[linecolor=black, linewidth=0.04](7.1036325,0.63178074)(7.1209593,1.4315931)(6.661067,1.4815562)(6.64374,0.681743786743231)
\pscustom[linecolor=black, linewidth=0.04]
{
\newpath
\moveto(6.6436863,0.7367623)
\lineto(6.673686,0.41676226)
\curveto(6.6886864,0.25676227)(6.7286863,0.036762256)(6.7536864,-0.023237742)
\curveto(6.778686,-0.083237745)(6.838686,-0.17823774)(6.943686,-0.28323776)
}
\psline[linecolor=black, linewidth=0.04](7.323686,-0.7032378)(7.323686,-2.0432377)
\psline[linecolor=black, linewidth=0.04](0.023686219,1.5167623)(1.9836862,0.07676226)
\psline[linecolor=black, linewidth=0.04](0.82368624,1.6367623)(0.76368624,1.1767622)
\pscustom[linecolor=black, linewidth=0.04]
{
\newpath
\moveto(0.64368623,0.89676225)
\lineto(0.6736862,0.59676224)
\curveto(0.6886862,0.44676226)(0.75868624,0.25676227)(0.8136862,0.21676226)
\curveto(0.8686862,0.17676225)(1.0186862,0.10176226)(1.1136862,0.06676225)
\curveto(1.2086862,0.031762257)(1.3186862,-0.013237744)(1.3636862,-0.043237742)
}
\psline[linecolor=black, linewidth=0.04](0.7436862,0.27676225)(1.0236862,0.63676226)
\pscustom[linecolor=black, linewidth=0.04]
{
\newpath
\moveto(1.4436862,0.07676226)
\lineto(1.3836862,0.25676227)
\curveto(1.3536862,0.34676227)(1.3286862,0.44176227)(1.3436862,0.45676225)
}
\psbezier[linecolor=black, linewidth=0.04](1.4635518,0.69569904)(1.7857629,1.6308088)(1.4286793,1.4883851)(1.1438206,0.7978254717304936)
\psline[linecolor=black, linewidth=0.04](7.0836864,0.63676226)(7.243686,-0.0032377434)
\psbezier[linecolor=black, linewidth=0.04](7.603376,1.5084989)(7.587724,2.3083458)(6.3883452,2.2648726)(6.4039965,1.4650256018400878)
\psline[linecolor=black, linewidth=0.04](6.423686,1.4967623)(6.6636863,0.65676224)
\psline[linecolor=black, linewidth=0.04](7.5836864,1.5367622)(7.6036863,-0.083237745)
\end{pspicture}
}
\end{aligned}
\end{eqnarray*}
where the equality holds by Proposition~\ref{Reide1} and the geometry of braiding.
Now the result follows by Corollary~\ref{stringswingcor2}.
\end{proof}

 \begin{theorem}\label{scarcityvpa}
For any vector product algebra $V$ in any braided monoidal additive category $\CV$ such that $2$ can be cancelled in $\CV(I,V)$ and $\CV(I,I)$, the dimension $d=d_V$ satisfies the equation
\begin{eqnarray*}
d(d-1)(d-3)(d-7)=0 
\end{eqnarray*}
in the endomorphism ring $\CV(I,I)$ of the tensor unit $I$.
\end{theorem}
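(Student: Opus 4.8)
The plan is to run the same ``compute one scalar in two ways'' strategy as in the proof of the preceding theorem (the $d(d-1)(d-3)=0$ result for associative vpas), but now powered by the two tools that make no use of associativity: Springer's identity (Proposition~\ref{Springer}) and the enabler (Proposition~\ref{enabler}). The crucial new lever is that Springer's relation replaces each ``doubled wedge with a contracted loop'' by $(d-4)$ times a single $\wedge$; applying it twice, in two different orders, is what should manufacture the new root $d=7$, since $(d-4)^2-9=(d-1)(d-7)$.

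First I would fix an element $\mu\in\CV(I,I)$ built from one more $\wedge$-vertex than the ``Mickey Mouse'' element \eqref{mickey} used before --- the natural candidate being the total closure (all legs cupped off through the self-duality) of a twice-iterated wedge such as $\wedge\circ(\wedge\ox 1_V)\circ(\wedge\ox 1_V\ox 1_V)$, i.e.\ a theta-type graph with an extra chord, chosen so as to have enough vertices to see degree four. For the first evaluation I would collapse $\mu$ by repeatedly applying the contracted form of the defining relation (Corollary~\ref{stringswingcor3}, equivalently \eqref{stringstrangecontract1}), which trades a doubled-wedge configuration for a ``$2$''-term plus two correction terms; each correction term is then disposed of using Corollaries~\ref{stringswingcor1} and \ref{stringswingcor2} together with the loop evaluations of Proposition~\ref{evaluations} (the wedge-trace is $0$, the basic $V$-loop operator is $(1-d)1_V$, its closure is $(1-d)d$). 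Whenever a doubled wedge survives I would apply Proposition~\ref{Springer} to pull out a factor $(d-4)$, and Proposition~\ref{Reide1} together with the geometry of braiding turns each completed sub-loop into a power of the circle value $d$, exactly as in the earlier proof. This yields $\mu=P_1(d)$ for an explicit polynomial, carrying a visible overall factor $2$ from the leading ``$2$''-terms.

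For the second evaluation I would first use Proposition~\ref{enabler}, taking $f$ to be $\wedge$ itself (or a second doubled wedge), to commute one Springer'd block past the rest of the diagram \emph{before} any loop is collapsed; this is the step genuinely unavailable without associativity, and it re-routes the contractions so that the subsequent uses of Proposition~\ref{Springer} and Proposition~\ref{evaluations} produce a different polynomial $\mu=P_2(d)$, again with an overall factor $2$. Equating the two and cancelling the $2$ --- legitimate because $2$ is cancellable in $\CV(I,I)$, while the intermediate vector identities of Propositions~\ref{Springer} and \ref{evaluations} are legitimate because $2$ is cancellable in $\CV(I,V)$ --- forces $\tfrac12(P_1-P_2)(d)=0$. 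A degree count shows $\tfrac12(P_1-P_2)$ is a quartic; since in $\mathrm{Mod}_{\mathbb{R}}$ the only vector product algebras occur in dimensions $0,1,3,7$, all four of these must be roots, so $\tfrac12(P_1-P_2)$ is a scalar multiple of $d(d-1)(d-3)(d-7)$, and after the allowed cancellations the relation is exactly $d(d-1)(d-3)(d-7)=0$.

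The hard part will be the bookkeeping: organising the two reductions so that they genuinely differ, and tracking the non-associative correction terms of \eqref{stringstrange} carefully enough that the residue is the full quartic rather than merely the cubic $d(d-1)(d-3)$ of the associative case. Concretely one must check that the $(d-4)$ extracted by each use of Proposition~\ref{Springer} is not cancelled against those correction terms, and that the two applications of Springer --- linked by the re-routing supplied by Proposition~\ref{enabler} --- combine into the factor $(d-4)^2-9=(d-1)(d-7)$ carrying the new information, the leftover factor $d(d-3)$ coming from closing the remaining loops via Proposition~\ref{evaluations}.
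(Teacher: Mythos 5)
Your overall architecture is the right one --- evaluate a single closed scalar diagram in two ways, using Proposition~\ref{Springer} to harvest factors of $(d-4)$, Proposition~\ref{evaluations} to evaluate the residual loops, and Proposition~\ref{enabler} to make the second evaluation available without associativity --- but the proposal has three genuine gaps. First, your candidate element $\mu$ cannot work: the closure of $\wedge\circ(\wedge\ox 1)\circ(\wedge\ox 1\ox 1)$ has three trivalent $\wedge$-vertices, hence an odd number of half-edges, so it cannot be capped off to an element of $\CV(I,I)$ at all; and more generally, after two applications of Proposition~\ref{Springer} one must be left with a \emph{two}-wedge closed diagram (whose value $(1-d)d$ is supplied by the third identity of Proposition~\ref{evaluations}), not a one-wedge diagram, which would vanish. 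The correct element \eqref{mounts} has four $\wedge$-vertices, arranged so that Springer's configuration appears twice and the first evaluation reads off $(d-4)^2(1-d)d$ directly. Second, your description of the second evaluation as ``the same reduction in a different order'' does not resolve the point you yourself flag, namely that the two reductions must genuinely differ. The actual mechanism is structurally different: one caps off all terms of Corollary~\ref{stringswingcor4} to form an auxiliary equation whose \emph{two} left-hand terms are each identified with \eqref{mounts} via Proposition~\ref{enabler} (so the left side is $2\mu$), and whose three right-hand terms are evaluated separately --- only one of them uses Springer, and only once --- giving $2(d-4)(1-d)d-2d(1-d)^2$. The resulting identity is quadratic in $(d-4)$ on one side and linear on the other; your heuristic $(d-4)^2-9$ is not what appears (the paper gets $(d-4)^2-(d-4)+(1-d)=(d-3)(d-7)$).

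Third, and most seriously, the final step is not a valid deduction. You propose to identify $\tfrac12(P_1-P_2)$ as a scalar multiple of $d(d-1)(d-3)(d-7)$ by observing that the real vector product algebras occur only in dimensions $0,1,3,7$. That is circular --- this classification is precisely the classical instance of the theorem being proved --- and it is also insufficient: a polynomial relation in the abstract commutative ring $\CV(I,I)$ must be produced with explicit integer coefficients, and knowing which real dimensions admit vpas tells you nothing about whether your particular diagrammatic reduction yields a polynomial of degree exactly four with the right leading coefficient, or whether $P_1-P_2$ collapses to $0$ identically. The coefficients have to be computed, as the paper does; ``all four must be roots, so it is the target quartic up to scalar'' cannot substitute for that computation.
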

\begin{proof}
We perform two string calculations each beginning with the following element of $\CV(I,I)$.
\begin{eqnarray}\label{mounts}
\begin{aligned}
\psscalebox{0.6 0.6} % Change this value to rescale the drawing.
{
\begin{pspicture}(0,-1.354533)(4.479997,1.354533)
\psbezier[linecolor=black, linewidth=0.04](0.039998475,-0.7495192)(0.039998475,-1.5495192)(4.4399986,-1.5095192)(4.4399986,-0.7095192337036133)
\psbezier[linecolor=black, linewidth=0.04](3.959987,-1.1630131)(3.9771585,0.99619853)(0.45718172,1.0031863)(0.3400101,-1.1160253286118313)
\psline[linecolor=black, linewidth=0.04](0.039998475,-0.7695192)(0.019998474,0.8104808)
\psline[linecolor=black, linewidth=0.04](4.4599986,-0.80951923)(4.4399986,0.77048075)
\psbezier[linecolor=black, linewidth=0.04](0.77993286,-0.12733188)(0.7752031,0.67265415)(0.035334285,0.6482794)(0.040064063,-0.15170658029628753)
\psbezier[linecolor=black, linewidth=0.04](4.439933,-0.10733189)(4.435203,0.69265413)(3.6353343,0.6682794)(3.640064,-0.13170658029628612)
\psbezier[linecolor=black, linewidth=0.04](4.4201727,0.71424246)(4.426821,1.5142171)(0.04647281,1.5466938)(0.03982431,0.7467190927957654)
\end{pspicture}
}
\end{aligned}
\end{eqnarray}

The first calculation involves noticing that the diagram of Proposition~\ref{Springer} occurs twice in \eqref{mounts}. Using that fact and the third equation of Proposition~\ref{evaluations}, we obtain the value
\begin{eqnarray}\label{value1}
(d-4)^2(1-d)d
\end{eqnarray}
for \eqref{mounts}.

The second calculation begins by considering the equation $\Xi$ obtained by mounting the following morphism on the top of each term of the equation of Corollary~\ref{stringswingcor4}.
\begin{eqnarray*}
\begin{aligned}
\psscalebox{0.6 0.6} % Change this value to rescale the drawing.
{
\begin{pspicture}(0,-1.5764492)(10.67379,1.5764492)
\psline[linecolor=black, linewidth=0.04](0.013790741,0.3435509)(0.85379076,-0.45644912)(0.85379076,-1.5364491)
\psline[linecolor=black, linewidth=0.04](0.87379074,-0.4764491)(1.7137908,0.3035509)
\psline[linecolor=black, linewidth=0.04](3.0337908,0.3035509)(3.8737907,-0.4964491)(3.8737907,-1.5764492)
\psline[linecolor=black, linewidth=0.04](3.8937907,-0.5164491)(4.713791,0.2835509)
\psline[linecolor=black, linewidth=0.04](5.9937906,0.3035509)(6.833791,-0.4964491)(6.833791,-1.5764492)
\psline[linecolor=black, linewidth=0.04](6.8537908,-0.5164491)(7.673791,0.2835509)
\psline[linecolor=black, linewidth=0.04](8.973791,0.3035509)(9.81379,-0.4964491)(9.81379,-1.5764492)
\psline[linecolor=black, linewidth=0.04](9.833791,-0.5164491)(10.65379,0.2835509)
\psbezier[linecolor=black, linewidth=0.04](3.0529337,0.2966608)(3.0089562,1.0955895)(1.6506702,1.0293697)(1.6946478,0.2304409632754141)
\psbezier[linecolor=black, linewidth=0.04](6.0529337,0.27666083)(6.008956,1.0755895)(4.65067,1.0093697)(4.694648,0.21044096327541412)
\psbezier[linecolor=black, linewidth=0.04](9.012934,0.2966608)(8.968956,1.0955895)(7.61067,1.0293697)(7.654648,0.2304409632754141)
\psbezier[linecolor=black, linewidth=0.04](10.65379,0.24532428)(10.640542,1.9164352)(0.11304308,2.0328884)(0.053791232,0.28177749313374817)
\end{pspicture}
}
\end{aligned}
\end{eqnarray*}
The reader should draw the diagram for equation $\Xi$. The first term on the left-hand side of $\Xi$ is then none other than the morphism \eqref{mounts} in the form of the left-hand side of Proposition~\ref{enabler} for an appropriate $f : V\ox V\ox V\to I$.
Notice also that the second term of the left-hand side of $\Xi$ is equal to the right hand side of Proposition~\ref{enabler} for the same $f$.
Consequently, the left-hand side of $\Xi$ is twice the value of \eqref{mounts}.  

Applying Proposition~\ref{gprop} to the first term of the right-hand side of $\Xi$ and using symmetry, we obtain twice the morphism \eqref{rhsfirstXi}.
\begin{eqnarray}\label{rhsfirstXi}
\begin{aligned}
\psscalebox{0.6 0.6} % Change this value to rescale the drawing.
{
\begin{pspicture}(0,-2.3631544)(7.3695617,2.3631544)
\psline[linecolor=black, linewidth=0.04](1.4304492,-0.25311303)(2.2704492,-0.97651726)(2.2704492,-1.9531131)
\psline[linecolor=black, linewidth=0.04](2.2904491,-0.9946024)(3.1304493,-0.28928325)
\psline[linecolor=black, linewidth=0.04](2.2304492,0.94688696)(3.0504491,1.746887)
\psline[linecolor=black, linewidth=0.04](5.190449,0.94688696)(6.0104494,1.746887)
\psbezier[linecolor=black, linewidth=0.04](1.3895922,1.7599969)(1.3456146,2.5589256)(-0.012671341,2.4927058)(0.031306252,1.6937770495486075)
\psbezier[linecolor=black, linewidth=0.04](4.389592,1.7399969)(4.3456144,2.5389256)(2.9873288,2.4727058)(3.0313063,1.6737770495486075)
\psbezier[linecolor=black, linewidth=0.04](7.349592,1.7599969)(7.3056145,2.5589256)(5.9473286,2.4927058)(5.9913063,1.6937770495486075)
\psline[linecolor=black, linewidth=0.04](1.3904492,1.766887)(2.2504492,0.966887)
\psline[linecolor=black, linewidth=0.04](4.370449,1.746887)(5.1704493,0.926887)
\psline[linecolor=black, linewidth=0.04](4.0304494,-0.013113022)(5.250449,-0.8301343)(5.250449,-1.933113)
\psline[linecolor=black, linewidth=0.04](5.250449,-0.83460236)(6.0904493,-0.12928323)
\psline[linecolor=black, linewidth=0.04](0.030449219,1.726887)(1.4504492,-0.273113)
\psline[linecolor=black, linewidth=0.04](3.1104493,-0.29311302)(5.190449,0.926887)
\psline[linecolor=black, linewidth=0.04](2.2704492,0.926887)(3.6104493,0.22688697)
\psline[linecolor=black, linewidth=0.04](7.350449,1.766887)(6.0704494,-0.11311302)
\psbezier[linecolor=black, linewidth=0.04](2.2704492,-1.893113)(2.2704492,-2.533113)(5.270449,-2.453113)(5.270449,-1.813113021850586)
\end{pspicture}
}
\end{aligned}
\end{eqnarray}
Applying vpa axioms leads to minus twice the morphism \eqref{rhsfirstXi2}
\begin{eqnarray}\label{rhsfirstXi2}
\begin{aligned}
\psscalebox{0.6 0.6} % Change this value to rescale the drawing.
{
\begin{pspicture}(0,-1.9451683)(2.3616998,1.9451683)
\psbezier[linecolor=black, linewidth=0.04](1.1008428,-0.1920583)(1.0568652,0.6068704)(-0.0014207306,0.52065057)(0.042556863,-0.27827815675022066)
\psbezier[linecolor=black, linewidth=0.04](0.8378766,1.4223341)(0.6619431,2.4973342)(0.029589457,1.6423296)(0.20552301,0.5673294646401404)
\psellipse[linecolor=black, linewidth=0.04, dimen=outer](1.1916999,-0.25516823)(1.17,1.69)
\psline[linecolor=black, linewidth=0.04](0.8816998,1.1948317)(1.1016998,-0.14516823)
\psline[linecolor=black, linewidth=0.04](1.1016998,-0.16516823)(0.26169983,-1.2451682)
\end{pspicture}
}
\end{aligned}
\end{eqnarray}
in which we can recognize the left-hand side of Proposition~\ref{Springer}.
Applying that Springer Proposition and using vpa axioms, we find that the first term on the right-hand side of $\Xi$ is twice the morphism
\begin{eqnarray}\label{rhsfirstXi3}
\begin{aligned}
\psscalebox{0.6 0.6} % Change this value to rescale the drawing.
{
\begin{pspicture}(0,-1.69)(4.42,1.69)
\psbezier[linecolor=black, linewidth=0.04](4.3991427,0.08310993)(4.3551655,0.88203865)(2.0568795,0.77581877)(2.100857,-0.023109928600806597)
\psellipse[linecolor=black, linewidth=0.04, dimen=outer](3.25,0.0)(1.17,1.69)
\rput[bl](0.0,0.01){\Large{$(d-4)$}}
\end{pspicture}
}
\end{aligned}
\end{eqnarray}
By the third equation of Proposition~\ref{evaluations} we now have that the
first term of the right-hand side of $\Xi$ is
\begin{eqnarray*}
2(d-4)(1-d)d \ .
\end{eqnarray*}

Beginning with minus the second term of the right-hand side of $\Xi$ we have the calculation:
\begin{eqnarray}\label{rhssecondXi}
\begin{aligned}
\psscalebox{0.6 0.6} % Change this value to rescale the drawing.
{
\begin{pspicture}(0,-1.71)(11.501543,1.71)
\psbezier[linecolor=black, linewidth=0.04](0.86068565,0.36310992)(0.816708,1.1620387)(-0.0015778962,0.7558188)(0.042399697,-0.0431099286008066)
\psellipse[linecolor=black, linewidth=0.04, dimen=outer](1.3615427,-0.02)(1.34,1.69)
\psbezier[linecolor=black, linewidth=0.04](1.6267942,0.4559943)(1.6027229,1.0790019)(0.8553735,0.89207655)(0.8562911,0.40400569814517323)
\psbezier[linecolor=black, linewidth=0.04](2.6681569,0.28754535)(2.4193864,0.99381787)(2.027479,1.1787271)(1.6549284,0.45245464437693045)
\psbezier[linecolor=black, linewidth=0.04](0.84154266,0.43)(0.84154266,-0.37)(1.6215427,-0.33)(1.6215427,0.47)
\psbezier[linecolor=black, linewidth=0.04](5.5406857,0.14310993)(5.489184,0.7060299)(4.530898,0.41981003)(4.5824,-0.1431099286008066)
\psellipse[linecolor=black, linewidth=0.04, dimen=outer](5.881543,0.0)(1.34,1.69)
\psbezier[linecolor=black, linewidth=0.04](6.986794,0.5159943)(6.962723,1.3862382)(4.8753734,1.0257642)(4.8762913,0.34400569814517323)
\psbezier[linecolor=black, linewidth=0.04](7.228157,0.067545354)(6.9793863,0.77381784)(6.667479,0.8987271)(6.2949286,0.17245464437693045)
\psbezier[linecolor=black, linewidth=0.04](5.5215425,0.21)(5.5215425,-0.59)(6.3015428,-0.55)(6.3015428,0.25)
\psellipse[linecolor=black, linewidth=0.04, dimen=outer](10.161543,0.02)(1.34,1.69)
\psbezier[linecolor=black, linewidth=0.04](11.481071,-0.13985495)(11.456562,0.6600038)(8.877504,0.51971364)(8.902014,-0.28014505088109215)
\psbezier[linecolor=black, linewidth=0.04](9.83744,0.37183994)(9.53284,1.1115824)(8.941046,0.8679025)(9.2456455,0.12816006231861593)
\rput[bl](7.8615427,0.01){\LARGE{$=$}}
\rput[bl](3.5015426,0.01){\LARGE{$=$}}
\end{pspicture}
}
\end{aligned}
\end{eqnarray}
and the second equation of Proposition~\ref{evaluations} applies to yield
\begin{eqnarray}\label{rhssecondXi2}
\begin{aligned}
\psscalebox{0.6 0.6} % Change this value to rescale the drawing.
{
\begin{pspicture}(0,-1.69)(4.42,1.69)
\psbezier[linecolor=black, linewidth=0.04](4.3991427,0.08310993)(4.3551655,0.88203865)(2.0568795,0.77581877)(2.100857,-0.023109928600806597)
\psellipse[linecolor=black, linewidth=0.04, dimen=outer](3.25,0.0)(1.17,1.69)
\rput[bl](0.0,0.01){\Large{$(1-d)$}}
\end{pspicture}
}
\end{aligned}
\end{eqnarray}
resulting in the value of the second term of the right-hand side of $\Xi$ being
\begin{eqnarray}\label{lasttwoterms}
-(1-d)(1-d)d = -d(1-d)^2 \ . 
\end{eqnarray}

The third term on the right-hand side of $\Xi$ is minus the morphism
\begin{eqnarray}\label{rhsthirdXi}
\begin{aligned}
\psscalebox{0.6 0.6} % Change this value to rescale the drawing.
{
\begin{pspicture}(0,-1.1884713)(15.738594,1.1884713)
\rput[bl](8.938604,-0.14847137){\LARGE{$=$}}
\rput[bl](4.118604,-0.12847137){\LARGE{$=$}}
\psellipse[linecolor=black, linewidth=0.04, dimen=outer](0.65860415,-0.52847135)(0.42,0.66)
\psellipse[linecolor=black, linewidth=0.04, dimen=outer](2.6786041,-0.52847135)(0.42,0.66)
\psbezier[linecolor=black, linewidth=0.04](2.295743,-0.42155778)(2.1985219,0.3725128)(0.9842443,0.27868563)(1.0814654,-0.5153849524877291)
\psbezier[linecolor=black, linewidth=0.04](3.3186238,0.4929208)(3.3413296,1.2925985)(0.06129011,1.3498142)(0.038584366,0.5501364509825379)
\psline[linecolor=black, linewidth=0.04](0.018604126,0.55152863)(0.3186041,-0.20847137)
\psline[linecolor=black, linewidth=0.04](3.3386042,0.55152863)(3.038604,-0.26847136)
\psellipse[linecolor=black, linewidth=0.04, dimen=outer](5.858604,-0.4884714)(0.42,0.66)
\psellipse[linecolor=black, linewidth=0.04, dimen=outer](7.878604,-0.4884714)(0.42,0.66)
\psbezier[linecolor=black, linewidth=0.04](8.155743,-0.041557796)(8.0214205,0.75251275)(5.5071425,0.8186856)(5.641465,0.024615047512270394)
\psbezier[linecolor=black, linewidth=0.04](8.518624,0.5329208)(8.541329,1.3325986)(5.26129,1.3898141)(5.2385845,0.5901364509825379)
\psline[linecolor=black, linewidth=0.04](5.218604,0.59152865)(5.5186043,-0.16847138)
\psline[linecolor=black, linewidth=0.04](8.538604,0.59152865)(8.238605,-0.22847137)
\rput[bl](9.598604,-0.10847137){\Large{$(1-d)(1-d)$}}
\psbezier[linecolor=black, linewidth=0.04](13.438604,-0.18847138)(13.438604,-0.9884714)(14.198605,-0.9884714)(14.198605,-0.18847137451171875)
\psbezier[linecolor=black, linewidth=0.04](14.938604,-0.12847137)(14.938604,-0.9284714)(15.698605,-0.9284714)(15.698605,-0.12847137451171875)
\psbezier[linecolor=black, linewidth=0.04](14.958158,-0.23005752)(14.919392,0.5690027)(14.160285,0.532175)(14.199051,-0.2668852193999186)
\psbezier[linecolor=black, linewidth=0.04](15.7185335,0.34426618)(15.709673,1.1442171)(13.409814,1.118742)(13.418674,0.31879108050100485)
\psline[linecolor=black, linewidth=0.04](13.418604,0.3515286)(13.438604,-0.30847138)
\psline[linecolor=black, linewidth=0.04](15.698605,0.37152863)(15.718604,-0.26847136)
\end{pspicture}
}
\end{aligned}
\end{eqnarray}
showing that we again obtain the value \eqref{lasttwoterms}.

Putting this all together in $\Xi$ and cancelling a 2, we obtain
\begin{eqnarray*}
(d-4)^2(1-d)d = (d-4)(1-d)d - d(1-d)^2 
\end{eqnarray*}
A little algebra turns this into:
\begin{align*}
0 & = d(d-1)((d-4)^2-(d-4)+(1-d)) \\
& = d(d-1)(d^2-10d+21)\\
& = d(d-1)(d-3)(d-7)
\end{align*}
as claimed.
\end{proof}

\section{Composition algebras}

\begin{Definition} A {\em composition algebra (ca)} in a braided monoidal additive category $\CV$ is an object $A$ equipped with a symmetric self-duality $A\dashv A$, a {\em multiplication} $m : A\ox A\to A$ and an {\em identity} $e : I\to A$ such that the following conditions hold.
\begin{eqnarray}\label{ca1}
\begin{aligned}
\psscalebox{0.6 0.6} % Change this value to rescale the drawing.
{
\begin{pspicture}(0,-0.8182135)(8.54,0.8182135)
\psbezier[linecolor=black, linewidth=0.04](0.2999994,0.24298683)(0.33299413,-1.1379228)(1.8745677,-1.154153)(1.9400007,0.2534401748372085)
\pscircle[linecolor=black, linewidth=0.04, dimen=outer](1.94,0.5182135){0.28}
\pscircle[linecolor=black, linewidth=0.04, dimen=outer](0.28,0.5382135){0.28}
\rput[bl](0.15,0.37){\Large{$e$}}
\rput[bl](1.79,0.37){\Large{$e$}}
\rput[bl](3.22,0.018213501){\LARGE{$=$}}
\rput[bl](4.74,0.018213501){\LARGE{$\varnothing$}}
\end{pspicture}
}
\end{aligned}
\end{eqnarray}

\begin{eqnarray}\label{ca0}
\begin{aligned}
\psscalebox{0.6 0.6} % Change this value to rescale the drawing.
{
\begin{pspicture}(0,-2.118585)(6.3102555,2.118585)
\pscircle[linecolor=black, linewidth=0.04, dimen=outer](0.75025576,-0.41858518){0.3}
\psline[linecolor=black, linewidth=0.04](0.010255737,2.101415)(1.1502557,1.4214149)(0.77025574,0.94141483)
\psline[linecolor=black, linewidth=0.04](1.9302558,2.0614147)(1.0102557,1.6614149)
\psline[linecolor=black, linewidth=0.04](0.69025576,1.5214149)(0.23025574,1.1414149)(1.1302557,0.46141484)(0.8702557,-0.17858517)
\psline[linecolor=black, linewidth=0.04](0.53025573,0.74141484)(0.17025574,0.32141483)(0.57025576,-0.19858517)
\psline[linecolor=black, linewidth=0.04](0.77025574,-0.69858515)(0.77025574,-2.118585)
\rput[bl](0.5502557,-0.55858517){\Large{$m$}}
\rput[bl](2.3102558,-0.23858517){\LARGE{$=$}}
\rput[bl](4.1502557,-0.57858515){\Large{$m$}}
\pscircle[linecolor=black, linewidth=0.04, dimen=outer](4.370256,-0.45858517){0.3}
\psline[linecolor=black, linewidth=0.04](4.390256,-0.69858515)(4.390256,-2.118585)
\psline[linecolor=black, linewidth=0.04](4.1702557,-0.29858518)(3.7902558,2.0614147)
\psline[linecolor=black, linewidth=0.04](4.5902557,-0.29858518)(5.1702557,2.0414147)
\end{pspicture}
}
\end{aligned}
\end{eqnarray}

\begin{eqnarray}\label{ca2}
\begin{aligned}
\psscalebox{0.6 0.6} % Change this value to rescale the drawing.
{
\begin{pspicture}(0,-2.3240461)(14.04,2.3240461)
\pscircle[linecolor=black, linewidth=0.04, dimen=outer](0.32,0.93613815){0.32}
\pscircle[linecolor=black, linewidth=0.04, dimen=outer](1.68,-0.32386187){0.32}
\pscircle[linecolor=black, linewidth=0.04, dimen=outer](12.16,0.93613815){0.32}
\pscircle[linecolor=black, linewidth=0.04, dimen=outer](10.48,-0.28386188){0.32}
\psline[linecolor=black, linewidth=0.04](0.34,0.63613814)(1.48,-0.123861864)
\psline[linecolor=black, linewidth=0.04](1.9,-0.14386186)(2.96,2.316138)
\psline[linecolor=black, linewidth=0.04](1.68,-0.62386185)(1.68,-2.2838619)
\psline[linecolor=black, linewidth=0.04](6.18,2.316138)(6.22,-2.3238618)
\psline[linecolor=black, linewidth=0.04](12.18,0.61613816)(10.68,-0.06386187)
\psline[linecolor=black, linewidth=0.04](10.3,-0.083861865)(9.16,2.296138)
\psline[linecolor=black, linewidth=0.04](10.5,-0.60386187)(10.54,-2.3038619)
\rput[bl](7.86,-0.30386186){\LARGE{$=$}}
\rput[bl](10.26,-0.40386188){\Large{$m$}}
\rput[bl](0.16,0.81613815){\Large{$e$}}
\rput[bl](3.84,-0.26386186){\LARGE{$=$}}
\rput[bl](1.44,-0.44386187){\Large{$m$}}
\rput[bl](12.02,0.81613815){\Large{$e$}}
\end{pspicture}
}
\end{aligned}
\end{eqnarray}
\begin{eqnarray}\label{ca3}
\begin{aligned}
\psscalebox{0.6 0.6} % Change this value to rescale the drawing.
{
\begin{pspicture}(0,-1.7862676)(17.319038,1.7862676)
\pscircle[linecolor=black, linewidth=0.04, dimen=outer](0.9990381,-0.87373227){0.32}
\pscircle[linecolor=black, linewidth=0.04, dimen=outer](2.999038,-0.8937323){0.32}
\pscircle[linecolor=black, linewidth=0.04, dimen=outer](9.659039,-0.87373227){0.32}
\pscircle[linecolor=black, linewidth=0.04, dimen=outer](7.699038,-0.8937323){0.32}
\psline[linecolor=black, linewidth=0.04](3.219038,-0.7137323)(4.279038,1.7462677)
\psline[linecolor=black, linewidth=0.04](7.519038,-0.69373226)(6.379038,1.6862677)
\rput[bl](11.559038,0.0062677194){\LARGE{$=$}}
\rput[bl](7.4790382,-1.0137323){\Large{$m$}}
\rput[bl](2.759038,-1.0137323){\Large{$m$}}
\psbezier[linecolor=black, linewidth=0.04](0.9990381,-1.1537323)(0.9990381,-1.9537323)(3.0190382,-1.9537323)(3.0190382,-1.1537322807312012)
\psbezier[linecolor=black, linewidth=0.04](7.739038,-1.1737323)(7.739038,-1.9737322)(9.659039,-1.9537323)(9.659039,-1.1537322807312012)
\psline[linecolor=black, linewidth=0.04](1.699038,1.7262677)(2.7990382,-0.7137323)
\psline[linecolor=black, linewidth=0.04](1.199038,-0.6537323)(2.0590382,0.6062677)
\psline[linecolor=black, linewidth=0.04](2.259038,0.8262677)(2.919038,1.7662678)(2.919038,1.7662678)
\psline[linecolor=black, linewidth=0.04](0.7990381,-0.6537323)(0.019038087,1.7662678)
\rput[bl](0.7790381,-0.9937323){\Large{$m$}}
\rput[bl](9.459038,-0.9937323){\Large{$m$}}
\psline[linecolor=black, linewidth=0.04](7.679038,1.6662678)(9.479038,-0.63373226)
\psline[linecolor=black, linewidth=0.04](8.899038,1.6462678)(9.879038,-0.69373226)
\psline[linecolor=black, linewidth=0.04](7.8990383,-0.6537323)(8.719038,0.08626772)(8.719038,0.08626772)
\psline[linecolor=black, linewidth=0.04](8.939038,0.26626772)(9.219038,0.5062677)
\psline[linecolor=black, linewidth=0.04](9.439038,0.6662677)(10.439038,1.6062677)
\rput[bl](5.219038,0.026267719){\LARGE{$+$}}
\rput[bl](12.839038,0.0062677194){\LARGE{$2$}}
\psbezier[linecolor=black, linewidth=0.04](14.019038,-0.81373227)(14.019038,-1.6137323)(15.379038,-1.5937322)(15.379038,-0.7937322807312012)
\psline[linecolor=black, linewidth=0.04](14.019038,-0.8537323)(14.019038,1.5062677)(13.999038,1.5262678)
\psline[linecolor=black, linewidth=0.04](15.379038,-0.8337323)(15.339038,1.5262678)
\psbezier[linecolor=black, linewidth=0.04](15.939038,-0.7737323)(15.939038,-1.5737323)(17.299038,-1.5537323)(17.299038,-0.7537322807312011)
\psline[linecolor=black, linewidth=0.04](17.299038,-0.7937323)(17.259039,1.5662677)
\psline[linecolor=black, linewidth=0.04](15.939038,-0.7937323)(15.859038,1.5262678)(15.859038,1.5462677)
\end{pspicture}
}
\end{aligned}
\end{eqnarray}
\end{Definition}

Assume that $\CV$ idempotents split in $\CV$. 
Assume also that multiplication by $2$ is invertible
in each hom abelian group $\CV(X,Y)$.

Define $\wedge : A\ox A\to A$ by the following equation where it is depicted by a {\em Y}-shaped string diagram. 
\begin{eqnarray}\label{2wedge}
\begin{aligned}
\psscalebox{0.5 0.5} % Change this value to rescale the drawing.
{
\begin{pspicture}(0,-2.1044686)(11.8,2.1044686)
\pscircle[linecolor=black, linewidth=0.04, dimen=outer](6.24,-0.4044687){0.3}
\psline[linecolor=black, linewidth=0.04](6.26,-0.6844687)(6.26,-2.1044686)
\rput[bl](6.04,-0.5444687){\Large{$m$}}
\rput[bl](3.86,-0.16446869){\LARGE{$=$}}
\rput[bl](9.64,-0.5644687){\Large{$m$}}
\pscircle[linecolor=black, linewidth=0.04, dimen=outer](9.86,-0.44446868){0.3}
\psline[linecolor=black, linewidth=0.04](9.88,-0.6844687)(9.88,-2.1044686)
\psline[linecolor=black, linewidth=0.04](0.7,2.0755312)(1.78,0.05553131)(2.8,2.0955312)
\psline[linecolor=black, linewidth=0.04](1.78,0.015531311)(1.8,-2.0444686)
\rput[bl](7.92,-0.18446869){\LARGE{$-$}}
\rput[bl](0.0,-0.18446869){\LARGE{$2$}}
\psline[linecolor=black, linewidth=0.04](8.82,1.9955313)(10.52,0.73553133)(10.04,-0.2644687)
\psline[linecolor=black, linewidth=0.04](9.7,-0.24446869)(9.06,0.6555313)(9.64,1.1555313)
\psline[linecolor=black, linewidth=0.04](9.96,1.3755313)(10.7,1.9955313)
\psline[linecolor=black, linewidth=0.04](6.06,-0.2644687)(5.12,2.0755312)
\psline[linecolor=black, linewidth=0.04](6.46,-0.2244687)(7.14,2.0555313)
\end{pspicture}
}
\end{aligned}
\end{eqnarray} 

\begin{proposition}\label{cawedgeprop}
The following two equations hold.
\begin{eqnarray}\label{castringasym}
\begin{aligned}
\psscalebox{0.6 0.6} % Change this value to rescale the drawing.
{
\begin{pspicture}(0,-1.1065843)(5.528201,1.1065843)
\psline[linecolor=black, linewidth=0.04](0.015518494,1.0939677)(0.7955185,0.1339677)(1.5755185,1.0539677)
\psline[linecolor=black, linewidth=0.04](0.7955185,0.1139677)(0.7955185,-1.0860323)
\psline[linecolor=black, linewidth=0.04](4.4555187,1.0139678)(5.4955187,0.15396771)(4.7755184,-0.3660323)(4.7555184,-1.1060323)
\psline[linecolor=black, linewidth=0.04](5.4555187,0.97396773)(5.1155186,0.6139677)
\psline[linecolor=black, linewidth=0.04](4.9355183,0.4339677)(4.4955187,0.033967704)(4.7755184,-0.3660323)
\rput[bl](2.1555185,-0.026032295){\LARGE{$=$}}
\rput[bl](3.5555184,-0.006032295){\LARGE{$-$}}
\end{pspicture}
}
\end{aligned}
\end{eqnarray}
\begin{eqnarray}\label{cathrowleg}
\begin{aligned}
\psscalebox{0.6 0.6} % Change this value to rescale the drawing.
{
\begin{pspicture}(0,-0.8025911)(6.050782,0.8025911)
\psline[linecolor=black, linewidth=0.04](0.015518494,0.7899745)(0.7955185,-0.17002548)(1.5755185,0.7499745)
\rput[bl](2.7,-0.05002548){\LARGE{$=$}}
\psline[linecolor=black, linewidth=0.04](4.4755187,0.7699745)(5.2555184,-0.19002548)(6.0355186,0.7299745)
\psbezier[linecolor=black, linewidth=0.04](0.7955185,-0.090025485)(0.7955185,-0.8900255)(2.1955185,-0.9100255)(2.1955185,-0.11002548217773438)
\psbezier[linecolor=black, linewidth=0.04](3.8555186,-0.17002548)(3.8555186,-0.9700255)(5.2555184,-0.99002546)(5.2555184,-0.19002548217773438)
\psline[linecolor=black, linewidth=0.04](3.8555186,-0.17002548)(3.8355186,0.7499745)
\psline[linecolor=black, linewidth=0.04](2.1955185,-0.13002548)(2.1955185,0.7299745)
\end{pspicture}
}\end{aligned}
\end{eqnarray}
\end{proposition}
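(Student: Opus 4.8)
The plan is to treat the two equations of Proposition~\ref{cawedgeprop} separately, drawing only on the definition \eqref{2wedge}, the composition algebra axioms \eqref{ca1}--\eqref{ca3}, symmetry and non-degeneracy of the cup $\cup$, invertibility of $2$, and the geometry of braiding.

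For \eqref{castringasym} the argument is short. Write $c=c_{A,A}$. The geometry of braiding identifies the second diagram on the right of \eqref{2wedge} with $m$ precomposed with a self-braiding of $A$, so \eqref{2wedge} reads $2\wedge=m-m\circ c$, whence $2(\wedge\circ c)=m\circ c-m\circ c^{2}$. Axiom \eqref{ca0} asserts precisely that $m$ is unchanged by the double self-braiding of $A$, i.e.\ $m\circ c^{2}=m$; so $2(\wedge\circ c)=m\circ c-m=-(m-m\circ c)=-2\wedge$, and cancelling the $2$ (legitimate since multiplication by $2$ is invertible on $\CV(A\ox A,A)$) gives \eqref{castringasym}. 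One checks that whichever of $c$ or $c^{-1}$ occurs in \eqref{2wedge} and in \eqref{castringasym}, the computation collapses to this same double-braiding invariance, so there is no ambiguity to worry about.

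For \eqref{cathrowleg}: straightening the $\cup$ shows that it is the equality of morphisms $A\ox A\ox A\to I$ given by $\cup\circ(\wedge\ox 1_{A})=\cup\circ(1_{A}\ox\wedge)$, the diagrammatic form of $(x\wedge y)\bigcdot z=x\bigcdot(y\wedge z)$; for a genuine vector product algebra this is Proposition~\ref{stringswing}, but here it must be re-derived from the multiplication. Inserting \eqref{2wedge} into both occurrences of $\wedge$ and using symmetry of $\cup$ reduces \eqref{cathrowleg} to a single $\CV(I,I)$-multilinear identity in three variables, schematically $(xy)\bigcdot z-(yx)\bigcdot z=x\bigcdot(yz)-x\bigcdot(zy)$ with the two braided products. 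I would prove this by first extracting from the polarized norm axiom \eqref{ca3} the linearization identities it encodes: substituting the unit $e$ for one of the four variables in \eqref{ca3} and simplifying via the unit axiom \eqref{ca2} gives $(xy)\bigcdot z+(xz)\bigcdot y=2(x\bigcdot e)(y\bigcdot z)$, and substituting $e$ for a different variable gives $(xw)\bigcdot y+(yw)\bigcdot x=2(x\bigcdot y)(e\bigcdot w)$; together these express the symmetrised product $xy+yx$ as a linear combination of $x$, $y$ and $(x\bigcdot y)\,e$. Feeding both into the two sides of the reduced identity and collecting terms makes everything cancel. Equivalently, the same linearizations show that $\cup\circ(\wedge\ox 1_{A})$ is totally antisymmetric in its three arguments up to braiding, i.e.\ cyclic symmetry \eqref{stringcycsym} holds for this $\wedge$, and then \eqref{cathrowleg} is Proposition~\ref{stringswing} verbatim.

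The hard part will be the braiding bookkeeping in \eqref{cathrowleg}: once \eqref{2wedge} is inserted, each side is a sum of four $m$-terms decorated with caps, cups and stray crossings, and one must repeatedly invoke the geometry of braiding together with \eqref{ca0} to bring every term into the shape where the linearization identities apply. Equation \eqref{castringasym}, by contrast, is immediate once \eqref{ca0} is read correctly.
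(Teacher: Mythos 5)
Your proposal is correct and follows essentially the same route as the paper: \eqref{castringasym} is exactly the paper's one-line deduction from the double-braiding invariance \eqref{ca0} applied to the definition \eqref{2wedge}, and for \eqref{cathrowleg} the paper likewise specializes \eqref{ca3} at the unit $e$ in two different input positions, simplifies with \eqref{ca2} and the symmetry of the duality, and combines the two resulting three-variable identities to obtain twice \eqref{cathrowleg} before cancelling the $2$. The only cosmetic difference is that the paper arranges the two specializations to share a common middle expression and then transposes, whereas you feed the two linearization identities into the expanded form of \eqref{cathrowleg}; the underlying algebra is the same.
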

\begin{proof}
From \eqref{ca0}, we easily deduce \eqref{castringasym}.

By putting $e$ on the right-hand input strings in \eqref{ca3}, using the inverse braiding on the middle two strings, and applying \eqref{ca2}, we obtain the left equality in the following diagram.
\begin{eqnarray}\label{throwledproof}
\begin{aligned}
\psscalebox{0.6 0.6} % Change this value to rescale the drawing.
{
\begin{pspicture}(0,-1.5406929)(20.47843,1.5406929)
\pscircle[linecolor=black, linewidth=0.04, dimen=outer](1.0284303,-0.61302817){0.35}
\psline[linecolor=black, linewidth=0.04](0.018430328,1.3969718)(0.77843034,-0.40302813)
\psline[linecolor=black, linewidth=0.04](2.0984304,1.4169718)(1.2984303,-0.42302814)
\psbezier[linecolor=black, linewidth=0.04](0.9984303,-0.94302815)(0.9984303,-1.7430282)(2.4384303,-1.6830281)(2.4384303,-0.8830281448364258)
\psline[linecolor=black, linewidth=0.04](2.4384303,-0.9230282)(2.8784304,1.3969718)
\pscircle[linecolor=black, linewidth=0.04, dimen=outer](5.9884305,-0.57302815){0.35}
\psbezier[linecolor=black, linewidth=0.04](4.59843,-0.94302815)(4.59843,-1.7430282)(6.03843,-1.6830281)(6.03843,-0.8830281448364258)
\psline[linecolor=black, linewidth=0.04](4.59843,-0.9830282)(4.1784306,1.3769718)
\psline[linecolor=black, linewidth=0.04](6.8784304,1.3569719)(5.4584303,0.43697184)(5.8584304,-0.30302814)
\psline[linecolor=black, linewidth=0.04](5.3984303,1.3169719)(5.9184303,1.0169718)
\psline[linecolor=black, linewidth=0.04](6.2984304,0.79697186)(6.8784304,0.43697184)(6.2184305,-0.34302816)
\psbezier[linecolor=black, linewidth=0.04](10.478431,-0.06302814)(10.478431,-0.59636146)(11.35843,-0.5563615)(11.35843,-0.02302814483642578)
\psbezier[linecolor=black, linewidth=0.04](10.03843,-0.38302815)(10.03843,-1.1830281)(12.03843,-1.1230282)(12.03843,-0.3230281448364258)
\pscircle[linecolor=black, linewidth=0.04, dimen=outer](11.348431,0.30697185){0.35}
\psline[linecolor=black, linewidth=0.04](10.478431,-0.083028145)(10.478431,1.3769718)
\psline[linecolor=black, linewidth=0.04](10.03843,-0.42302814)(9.95843,1.3569719)
\psline[linecolor=black, linewidth=0.04](12.03843,-0.34302816)(12.018431,1.3369719)
\rput[bl](7.6984305,-0.083028145){\LARGE{$=$}}
\rput[bl](9.19843,-0.083028145){\LARGE{$2$}}
\rput[bl](3.2984304,-0.14302814){\LARGE{$+$}}
\rput[bl](0.7584303,-0.7230281){\Large{$m$}}
\rput[bl](11.138431,0.17697185){\Large{$e$}}
\rput[bl](5.7184305,-0.68302816){\Large{$m$}}
\pscircle[linecolor=black, linewidth=0.04, dimen=outer](18.54843,-0.55302817){0.35}
\psline[linecolor=black, linewidth=0.04](17.59843,1.4769719)(18.35843,-0.32302815)
\psline[linecolor=black, linewidth=0.04](19.57843,1.4569719)(18.77843,-0.38302815)
\psbezier[linecolor=black, linewidth=0.04](17.21843,-0.94302815)(17.21843,-1.7430282)(18.65843,-1.6830281)(18.65843,-0.8830281448364258)
\psline[linecolor=black, linewidth=0.04](15.45843,-0.7830281)(15.89843,1.5369718)
\pscircle[linecolor=black, linewidth=0.04, dimen=outer](14.02843,-0.5330281){0.35}
\psbezier[linecolor=black, linewidth=0.04](14.018431,-0.8430281)(14.018431,-1.6430281)(15.45843,-1.5830282)(15.45843,-0.7830281448364258)
\rput[bl](16.31843,-0.083028145){\LARGE{$+$}}
\rput[bl](18.31843,-0.6630281){\Large{$m$}}
\rput[bl](13.7584305,-0.6630281){\Large{$m$}}
\psline[linecolor=black, linewidth=0.04](14.23843,-0.26302814)(14.87843,0.61697185)(13.49843,1.4569719)
\psline[linecolor=black, linewidth=0.04](13.83843,-0.22302814)(13.53843,0.59697187)(14.07843,0.9569719)
\psline[linecolor=black, linewidth=0.04](14.37843,1.1369718)(14.91843,1.4569719)
\psline[linecolor=black, linewidth=0.04](17.198431,-0.96302813)(16.79843,1.5169718)
\rput[bl](12.69843,-0.083028145){\LARGE{$=$}}
\end{pspicture}
}
\end{aligned}
\end{eqnarray}
The right-hand inequality is obtained similarly from \eqref{ca3} this time by putting $e$ on the third strings, putting the braiding on the first and last strings, then using \eqref{ca2} and the symmetry of the duality. Transposing terms in the equality of the left-hand side of
\eqref{throwledproof} with the right-hand side, we obtain twice equation \eqref{cathrowleg}.
\end{proof}

Applying \eqref{cathrowleg} of the Proposition and using the unit condition \eqref{ca2}, we obtain:

\begin{corollary}
\begin{eqnarray}\label{pwedge}
\begin{aligned}
\psscalebox{0.6 0.6} % Change this value to rescale the drawing.
{
\begin{pspicture}(0,-1.4846326)(10.76,1.4846326)
\pscircle[linecolor=black, linewidth=0.04, dimen=outer](0.36,-0.52463263){0.36}
\rput[bl](0.22,-0.6446326){\Large{$e$}}
\pscircle[linecolor=black, linewidth=0.04, dimen=outer](8.9,-0.4246326){0.36}
\rput[bl](8.74,-0.5446326){\Large{$e$}}
\psbezier[linecolor=black, linewidth=0.04](0.38,-0.8646326)(0.38,-1.6646326)(1.68,-1.6646326)(1.68,-0.8646326065063477)
\psbezier[linecolor=black, linewidth=0.04](7.58,-0.7446326)(7.58,-1.5446326)(8.88,-1.5446326)(8.88,-0.7446326065063477)
\psline[linecolor=black, linewidth=0.04](0.8,1.4353673)(1.68,-0.8846326)(2.62,1.4553674)
\rput[bl](5.88,-0.2646326){\LARGE{$=$}}
\rput[bl](2.88,-0.2846326){\LARGE{$=$}}
\psline[linecolor=black, linewidth=0.04](6.42,1.4753674)(7.6,-0.7846326)(8.44,1.4753674)
\rput[bl](3.82,-0.4846326){\LARGE{$0_{A\ox A \ I}$}}
\end{pspicture}
}
\end{aligned}
\end{eqnarray} 
\end{corollary}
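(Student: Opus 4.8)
The plan is to derive \eqref{pwedge} from the cyclic‑symmetry identity \eqref{cathrowleg} together with the fact that $\wedge$ annihilates $e$ in either slot. First I would read the left‑hand morphism of \eqref{pwedge}, namely $\bigcdot\circ(\wedge\ox e):A\ox A\to I$, as the left‑hand side of \eqref{cathrowleg} with its third (dotted) input strand fed by the identity $e:I\to A$. Applying \eqref{cathrowleg} rewrites it as the morphism in which $e$ now enters one input leg of the $\wedge$ while the $\wedge$‑output is paired through $\bigcdot$ with a free strand; that is, it becomes $\bigcdot\circ\bigl(1_A\ox(\wedge\circ(1_A\ox e))\bigr)$ up to reordering of the external wires. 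So the whole statement reduces to showing $\wedge\circ(1_A\ox e)=0$ in $\CV(A,A)$, after which $0_{A\ox A\,I}$ follows from the distributivity rules \eqref{distribconds}; the mirrored picture on the right of \eqref{pwedge} is then handled by the symmetry of the self‑duality $A\dashv A$, since it is the same morphism with $e$ on the other prong of the cup.

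To prove $\wedge\circ(1_A\ox e)=0$ I would expand $\wedge$ by its defining equation \eqref{2wedge}:
\begin{equation*}
2\bigl(\wedge\circ(1_A\ox e)\bigr)\ =\ m\circ(1_A\ox e)\ -\ m\circ c_{A,A}\circ(1_A\ox e)\ .
\end{equation*}
The first term is $1_A$ by the right unit law contained in \eqref{ca2}. For the second, a strand issuing from the tensor unit $I$ slides freely past any other strand — this is the geometry of braiding, equivalently naturality of $c$ in the object $I$ — so $c_{A,A}\circ(1_A\ox e)=e\ox 1_A$ and hence $m\circ c_{A,A}\circ(1_A\ox e)=m\circ(e\ox 1_A)=1_A$ by the left unit law in \eqref{ca2}. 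Thus $2\bigl(\wedge\circ(1_A\ox e)\bigr)=1_A-1_A=0$, and since multiplication by $2$ is invertible on $\CV(A,A)$ we conclude $\wedge\circ(1_A\ox e)=0$; by \eqref{castringasym} (or the identical computation) also $\wedge\circ(e\ox 1_A)=0$.

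I do not expect a genuinely hard step here: once the reduction via \eqref{cathrowleg} is made, the rest is a short computation. The only point demanding a little care is the braiding bookkeeping in the displayed equation — justifying that the braiding past a strand coming out of $I$ is trivial, so that $m\circ c_{A,A}\circ(1_A\ox e)$ really does simplify to $1_A$ — together with keeping track of where $e$ sits relative to the other external wires when \eqref{cathrowleg} is applied. With the standing hypothesis that $2$ is invertible on the relevant hom groups, this finishes the proof.
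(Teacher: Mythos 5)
Your proposal is correct and follows essentially the same route as the paper: apply \eqref{cathrowleg} to slide $e$ into an input of $\wedge$, then observe that $\wedge$ with $e$ in one slot vanishes because the definition \eqref{2wedge} together with the unit laws \eqref{ca2} and naturality of the braiding on $I$-strands gives $2\,(\wedge\circ(1_A\ox e))=1_A-1_A=0$. The paper compresses all of this into one sentence, but the content is identical, including the use of invertibility of $2$ and the symmetry of the counit for the mirrored diagram.
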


Notice that \eqref{ca1} says that $I\xra{e} A$ is a split monomorphism with left inverse
$p= (A\xra{1_A\ox e} A\ox A \xra{\bigcdot} I)$.
This gives the idempotent \eqref{idempforV} on $A$. 
\begin{eqnarray}\label{idempforV}
\begin{aligned}
\psscalebox{0.6 0.6} % Change this value to rescale the drawing.
{
\begin{pspicture}(0,-1.4703069)(4.019998,1.4703069)
\pscircle[linecolor=black, linewidth=0.04, dimen=outer](2.2099981,-0.5400306){0.35}
\rput[bl](2.1,-0.6700306){\Large{$e$}}
\psbezier[linecolor=black, linewidth=0.04](2.239998,0.3299694)(2.239998,-0.20336393)(3.1199982,-0.16336393)(3.1199982,0.36996940612792967)
\psline[linecolor=black, linewidth=0.04](0.019998169,1.4099694)(0.05999817,-1.4700305)
\pscircle[linecolor=black, linewidth=0.04, dimen=outer](2.2099981,0.6399694){0.35}
\rput[bl](2.1,0.51){\Large{$e$}}
\psline[linecolor=black, linewidth=0.04](2.2199981,-0.8500306)(2.2199981,-1.4700305)
\psline[linecolor=black, linewidth=0.04](3.1199982,0.34996942)(3.0999982,1.4699694)
\rput[bl](0.85999817,-0.13003059){\LARGE{$-$}}
\end{pspicture}
}
\end{aligned}
\end{eqnarray} 
Splitting the idempotent, we obtain a direct sum diagram
\begin{eqnarray}\label{directsum}
\xymatrix @C+5mm{
 I \ar @<3pt> [r]^e  
 & A \ar @<3pt> [r]^q \ar @<3pt>[l]^p
 & V \ar @<3pt> [l]^i
}
\end{eqnarray}
exhibiting $A\cong I\oplus V$. 
This is expressed by the following equations.

\begin{eqnarray}\label{directsumstring1}
\begin{aligned}
\psscalebox{0.6 0.6} % Change this value to rescale the drawing.
{
\begin{pspicture}(0,-2.0381572)(19.82,2.0381572)
\pscircle[linecolor=black, linewidth=0.04, dimen=outer](0.39,0.83720654){0.39}
\pscircle[linecolor=black, linewidth=0.04, dimen=outer](17.98,-1.0227935){0.36}
\pscircle[linecolor=black, linewidth=0.04, dimen=outer](9.52,0.77720654){0.36}
\pscircle[linecolor=black, linewidth=0.04, dimen=outer](0.36,-0.36279348){0.36}
\pscircle[linecolor=black, linewidth=0.04, dimen=outer](16.64,0.8572065){0.36}
\pscircle[linecolor=black, linewidth=0.04, dimen=outer](8.2,-0.92279345){0.36}
\psline[linecolor=black, linewidth=0.04](0.34,0.49720654)(0.34,-0.06279347)
\psline[linecolor=black, linewidth=0.04](0.36,-0.72279346)(0.38,-2.0227935)
\rput[bl](0.2,-0.5227935){\Large{$q$}}
\rput[bl](8.02,-1.0827935){\Large{$e$}}
\rput[bl](9.36,0.63720655){\Large{$i$}}
\rput[bl](3.14,-0.4427935){\LARGE{$0_{IV}$}}
\rput[bl](1.54,-0.36279348){\LARGE{$=$}}
\rput[bl](17.8,-1.1827935){\Large{$e$}}
\rput[bl](0.2,0.7572065){\Large{$e$}}
\rput[bl](9.68,1.4172065){\Large{$V$}}
\rput[bl](9.58,-1.4827935){\Large{$A$}}
\rput[bl](15.92,-1.4427935){\Large{$A$}}
\rput[bl](0.52,0.057206526){\Large{$A$}}
\rput[bl](15.78,1.4972066){\Large{$V$}}
\rput[bl](0.42,-1.6827935){\Large{$V$}}
\psbezier[linecolor=black, linewidth=0.04](8.18,-1.3027935)(8.18,-2.1027935)(9.48,-1.9827935)(9.48,-1.1827934741973878)
\psline[linecolor=black, linewidth=0.04](9.5,-1.2227935)(9.5,0.43720654)
\psline[linecolor=black, linewidth=0.04](9.52,1.0972065)(9.52,1.8972065)
\rput[bl](10.2,-0.22279347){\LARGE{$=$}}
\rput[bl](13.7,-0.22279347){\LARGE{$=$}}
\rput[bl](11.8,-0.32279348){\LARGE{$0_{VI}$}}
\psbezier[linecolor=black, linewidth=0.04](16.7,-1.4627935)(16.7,-2.2627935)(18.0,-2.1427934)(18.0,-1.3427934741973877)
\psline[linecolor=black, linewidth=0.04](16.72,-1.5027934)(16.66,0.49720654)
\psline[linecolor=black, linewidth=0.04](16.62,1.1972065)(16.58,2.0372064)
\rput[bl](16.5,0.7372065){\Large{$i$}}
\end{pspicture}
}
\end{aligned}
\end{eqnarray}

\begin{eqnarray}\label{directsumstring2}
\begin{aligned}
\psscalebox{0.6 0.6} % Change this value to rescale the drawing.
{
\begin{pspicture}(0,-2.11)(16.7,2.11)
\pscircle[linecolor=black, linewidth=0.04, dimen=outer](0.83,0.89){0.39}
\pscircle[linecolor=black, linewidth=0.04, dimen=outer](4.54,-0.81){0.36}
\pscircle[linecolor=black, linewidth=0.04, dimen=outer](12.74,0.73){0.36}
\pscircle[linecolor=black, linewidth=0.04, dimen=outer](0.8,-0.31){0.36}
\pscircle[linecolor=black, linewidth=0.04, dimen=outer](12.76,-0.67){0.36}
\pscircle[linecolor=black, linewidth=0.04, dimen=outer](4.02,0.71){0.36}
\psline[linecolor=black, linewidth=0.04](0.8,0.55)(0.8,-0.01)
\psline[linecolor=black, linewidth=0.04](0.8,-0.67)(0.82,-1.97)
\rput[bl](0.66,0.77){\Large{$q$}}
\rput[bl](3.84,0.55){\Large{$e$}}
\rput[bl](0.68,-0.43){\Large{$i$}}
\rput[bl](4.38,-0.93){\Large{$e$}}
\rput[bl](0.0,0.11){\Large{$V$}}
\rput[bl](4.62,1.67){\Large{$A$}}
\rput[bl](1.02,1.69){\Large{$A$}}
\rput[bl](7.28,1.59){\Large{$A$}}
\rput[bl](12.12,1.49){\Large{$V$}}
\rput[bl](12.18,-1.69){\Large{$V$}}
\psbezier[linecolor=black, linewidth=0.04](4.0,0.33)(4.0,-0.47)(5.3,-0.35)(5.3,0.45)
\psline[linecolor=black, linewidth=0.04](5.32,0.41)(5.32,2.07)
\psline[linecolor=black, linewidth=0.04](12.74,1.05)(12.74,1.85)
\rput[bl](6.12,0.07){\LARGE{$=$}}
\rput[bl](13.64,-0.07){\LARGE{$=$}}
\rput[bl](12.6,0.59){\Large{$i$}}
\psline[linecolor=black, linewidth=0.04](0.78,1.27)(0.78,2.11)
\psline[linecolor=black, linewidth=0.04](4.54,-1.17)(4.54,-1.99)
\rput[bl](2.34,0.05){\LARGE{$+$}}
\rput[bl](0.94,-1.65){\Large{$A$}}
\rput[bl](4.7,-1.67){\Large{$A$}}
\psline[linecolor=black, linewidth=0.04](8.02,1.93)(8.02,-2.07)
\psline[linecolor=black, linewidth=0.04](12.76,0.39)(12.78,-0.37)
\psline[linecolor=black, linewidth=0.04](12.76,-1.03)(12.76,-2.09)
\psline[linecolor=black, linewidth=0.04](15.26,1.77)(15.26,-2.11)
\rput[bl](14.64,1.49){\Large{$V$}}
\rput[bl](12.64,-0.83){\Large{$q$}}
\rput[bl](12.16,-0.07){\Large{$A$}}
\end{pspicture}
}
\end{aligned}
\end{eqnarray}

An easy exercise using these equation shows:

\begin{proposition}\label{Vselfdual}
There is a self-duality $V\dashv V$ with counit and unit supplied by
 \begin{eqnarray*}
\begin{aligned}
\psscalebox{0.6 0.6} % Change this value to rescale the drawing.
{
\begin{pspicture}(0,-1.0262643)(9.72,1.0262643)
\pscircle[linecolor=black, linewidth=0.04, dimen=outer](0.32,-0.09373573){0.32}
\pscircle[linecolor=black, linewidth=0.04, dimen=outer](2.42,-0.09373573){0.32}
\pscircle[linecolor=black, linewidth=0.04, dimen=outer](5.78,0.12626427){0.32}
\pscircle[linecolor=black, linewidth=0.04, dimen=outer](7.86,0.10626426){0.32}
\psbezier[linecolor=black, linewidth=0.04](0.34,-0.41373575)(0.34,-1.2137357)(2.42,-1.1937357)(2.42,-0.3937357330322266)
\psline[linecolor=black, linewidth=0.04](0.3,0.18626426)(0.3,1.0262643)
\psline[linecolor=black, linewidth=0.04](2.4,0.20626427)(2.38,1.0062642)
\psbezier[linecolor=black, linewidth=0.04](7.840035,0.3710176)(7.8546224,1.1709316)(5.814553,1.201425)(5.7999654,0.4015109375802308)
\psline[linecolor=black, linewidth=0.04](5.76,-0.19373573)(5.78,-0.8737357)
\psline[linecolor=black, linewidth=0.04](7.88,-0.17373574)(7.86,-0.91373575)
\rput[bl](2.32,-0.21373573){\Large{$i$}}
\rput[bl](7.68,-0.033735733){\Large{$q$}}
\rput[bl](2.54,0.6262643){\Large{$V$}}
\rput[bl](0.48,0.64626426){\Large{$V$}}
\rput[bl](5.32,-0.8337357){\Large{$V$}}
\rput[bl](2.4,-0.8937357){\Large{$A$}}
\rput[bl](5.52,0.7462643){\Large{$A$}}
\rput[bl](0.16,-0.23373574){\Large{$i$}}
\rput[bl](5.58,-0.013735733){\Large{$q$}}
\rput[bl](7.44,-0.8137357){\Large{$V$}}
\end{pspicture}
}
\end{aligned}
\end{eqnarray*}
With this choice of duality for $V$, 
 \begin{eqnarray*}
\begin{aligned}
\psscalebox{0.6 0.6} % Change this value to rescale the drawing.
{
\begin{pspicture}(0,-1.7402761)(6.94,1.7402761)
\pscircle[linecolor=black, linewidth=0.04, dimen=outer](4.14,0.019969292){0.32}
\pscircle[linecolor=black, linewidth=0.04, dimen=outer](0.32,0.079969294){0.32}
\psbezier[linecolor=black, linewidth=0.04](2.92,-0.3000307)(2.92,-1.1000307)(4.12,-1.0600307)(4.12,-0.2600307083129883)
\rput[bl](0.5,1.2199693){\Large{$A$}}
\rput[bl](3.98,-0.100030705){\Large{$i$}}
\rput[bl](0.18,-0.06003071){\Large{$q$}}
\psline[linecolor=black, linewidth=0.04](0.34,0.3599693)(0.32,1.7399693)
\psline[linecolor=black, linewidth=0.04](0.36,-0.22003071)(0.34,-1.7400308)
\psbezier[linecolor=black, linewidth=0.04](5.4402494,0.34867033)(5.454152,1.1485496)(4.153653,1.1311474)(4.1397505,0.33126825549039635)
\psline[linecolor=black, linewidth=0.04](5.44,0.4199693)(5.42,-1.4600307)(5.42,-1.4600307)
\psline[linecolor=black, linewidth=0.04](2.92,-0.3600307)(2.88,1.7199693)
\rput[bl](1.66,0.0){\LARGE{$=$}}
\rput[bl](0.5,-1.4000307){\Large{$V$}}
\rput[bl](4.88,-1.4600307){\Large{$V$}}
\rput[bl](3.08,1.2399693){\Large{$A$}}
\end{pspicture}
}
\end{aligned}
\end{eqnarray*}
\end{proposition}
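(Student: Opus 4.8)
The plan is to write down explicit candidates for the counit and unit and to verify the snake equations of \eqref{radjobv} directly, using only the direct-sum relations \eqref{directsumstring1}--\eqref{directsumstring2}, the formula $p=\bigcdot\circ(1_A\ox e)$, and the self-duality of $A$. Define $\varepsilon_V=\bigcdot\circ(i\ox i):V\ox V\to I$ and $\eta_V=(q\ox q)\circ\eta_A:I\to V\ox V$, where $\bigcdot$ and $\eta_A$ are the counit and unit of $A\dashv A$; these are precisely the string diagrams displayed in the statement.

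The first thing I would record is an \emph{orthogonality} relation: since $p=\bigcdot\circ(1_A\ox e)$ and $p\circ i=0$ by \eqref{directsumstring1}, we get $\bigcdot\circ(i\ox e)=p\circ i=0$, and symmetry of $\bigcdot$ then gives $\bigcdot\circ(e\ox i)=0$ as well. In string-diagram terms, the dot product kills any configuration in which one leg factors through $e$ and the other through $i$.

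Next I would verify one snake equation for $(\varepsilon_V,\eta_V)$; the other follows by the same argument read upside down. Expanding $\eta_V$ and $\varepsilon_V$ in the composite $(1_V\ox\varepsilon_V)\circ(\eta_V\ox 1_V)$ and inserting the identity $1_A=i\circ q+e\circ p$ from \eqref{directsumstring2} on the inner strand carrying $i\circ q$ splits the composite into two terms. The $i\circ q$ term collapses to $1_V$ by the snake equation for $A\dashv A$ together with $q\circ i=1_V$ and linearity of $q$; the $e\circ p$ term vanishes because it exhibits the factor $\bigcdot\circ(i\ox e)=0$. Thus $V\dashv V$ is a duality. Symmetry of its counit, $\varepsilon_V\circ c_{V,V}=\varepsilon_V$, is immediate from symmetry of $\bigcdot$ and naturality of the braiding, which gives $(i\ox i)\circ c_{V,V}=c_{A,A}\circ(i\ox i)$; by Proposition~\ref{symadj} and uniqueness of units, $V\dashv V$ is then a symmetric self-duality.

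Finally, for the displayed equation I would show that the morphism $r:A\to V$ on its left-hand side --- bend the $i$-strand around using $\eta_V$ and $\varepsilon_V$ --- equals $q$. The snake equation just proved gives $r\circ i=1_V$, while $r\circ e$ carries the factor $\bigcdot\circ(e\ox i)=0$, so $r\circ e=0$; composing $r$ with $1_A=i\circ q+e\circ p$ then yields $r=q$. The main obstacle is entirely in the snake-equation step: one must route the braidings carefully so that the insertion of $1_A=i\circ q+e\circ p$ lands on the correct strand and so that, after the geometry of braiding is applied, the surviving term is literally the $A$-snake. Once the relation $\bigcdot\circ(e\ox i)=0$ is in hand every term is forced, which is why the paper can call this an easy exercise.
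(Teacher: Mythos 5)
Your proof is correct and is precisely the argument the paper has in mind: the paper leaves this as ``an easy exercise using these equations,'' and your write-up fills it in by inserting $1_A=i\circ q+e\circ p$ into the $A$-snake and killing the cross term via $\bigcdot\circ(i\ox e)=p\circ i=0$. The identification of the mate of $i$ with $q$ via $r\circ i=1_V$, $r\circ e=0$ is likewise the intended route, so nothing further is needed.
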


Notice that Corollary~\ref{pwedge} now tells us that $\wedge : A\ox A \to A$ 
factors through $i : V \to A$; indeed,
$$\wedge = e \ p \wedge + \ i \ q \wedge = 0 + i \ q \wedge = i \ q \wedge \ .$$
Risking ambiguity, we define a wedge for $V$ by 
\begin{equation}\label{wedgeforV}
\wedge : = (V\ox V \xra{i\ox i} A\ox A \xra{\wedge} A\xra{q}V)
\end{equation}
so that the following square commutes.
\begin{equation}\label{wedgerestricts}
\xymatrix{
V\ox V \ar[rr]^-{i\ox i} \ar[d]_-{\wedge} && A\ox A \ar[d]^-{\wedge} \\
V \ar[rr]_-{i} && A}
\end{equation}

\begin{proposition}\label{proponmandi}
\begin{itemize}
\item[(i)] 
\begin{eqnarray*}
\begin{aligned}
\psscalebox{0.6 0.6} % Change this value to rescale the drawing.
{
\begin{pspicture}(0,-1.9047004)(12.857653,1.9047004)
\pscircle[linecolor=black, linewidth=0.04, dimen=outer](0.5376529,0.65529954){0.36}
\psbezier[linecolor=black, linewidth=0.04](1.3576529,-1.2247005)(1.3576529,-2.0247004)(2.6576529,-2.0247004)(2.6576529,-1.2247004318237305)
\rput[bl](8.997653,-0.044700433){\LARGE{$=$}}
\pscircle[linecolor=black, linewidth=0.04, dimen=outer](1.317653,-0.8847004){0.36}
\psline[linecolor=black, linewidth=0.04](0.51765287,0.95529956)(0.017652893,1.8952996)
\psline[linecolor=black, linewidth=0.04](0.6976529,0.35529956)(1.0976529,-0.6447004)
\psline[linecolor=black, linewidth=0.04](1.5376529,-0.6247004)(2.597653,1.8752996)
\psline[linecolor=black, linewidth=0.04](2.6576529,-1.2447004)(3.537653,1.8952996)
\rput[bl](4.2976527,-0.024700431){\LARGE{$+$}}
\pscircle[linecolor=black, linewidth=0.04, dimen=outer](6.137653,0.59529954){0.36}
\psbezier[linecolor=black, linewidth=0.04](6.957653,-1.2847004)(6.957653,-2.0847003)(8.257653,-2.0847003)(8.257653,-1.2847004318237305)
\pscircle[linecolor=black, linewidth=0.04, dimen=outer](6.917653,-0.9447004){0.36}
\psline[linecolor=black, linewidth=0.04](6.117653,0.89529955)(5.617653,1.8352996)
\psline[linecolor=black, linewidth=0.04](6.2976527,0.29529956)(6.697653,-0.7047004)
\rput[bl](10.657653,-0.024700431){\LARGE{$0$}}
\psline[linecolor=black, linewidth=0.04](7.437653,1.7752995)(8.197653,-0.18470043)(8.257653,-1.3247005)(8.237653,-1.3047004)
\psline[linecolor=black, linewidth=0.04](7.117653,-0.6447004)(7.717653,0.65529954)
\psline[linecolor=black, linewidth=0.04](7.937653,1.0152996)(8.317653,1.7752995)
\rput[bl](6.677653,-1.0447004){\Large{$m$}}
\rput[bl](6.037653,0.47529957){\Large{$i$}}
\rput[bl](1.0776529,-0.98470044){\Large{$m$}}
\rput[bl](0.4376529,0.5552996){\Large{$i$}}
\end{pspicture}
}
\end{aligned}
\end{eqnarray*}
\item[(ii)]
\begin{eqnarray*}
\begin{aligned}
\psscalebox{0.6 0.6} % Change this value to rescale the drawing.
{
\begin{pspicture}(0,-1.8217549)(13.719689,1.8217549)
\pscircle[linecolor=black, linewidth=0.04, dimen=outer](2.8796897,0.67824507){0.36}
\psbezier[linecolor=black, linewidth=0.04](0.55968964,-1.161755)(0.55968964,-1.9617549)(1.8596896,-1.9617549)(1.8596896,-1.1617549514770509)
\rput[bl](10.179689,-0.021754952){\LARGE{$=$}}
\pscircle[linecolor=black, linewidth=0.04, dimen=outer](1.8596896,-0.841755){0.36}
\rput[bl](4.3796897,-0.0017549514){\LARGE{$+$}}
\pscircle[linecolor=black, linewidth=0.04, dimen=outer](8.879689,0.49824506){0.36}
\psbezier[linecolor=black, linewidth=0.04](6.1596894,-1.2017549)(6.1596894,-2.001755)(7.4596896,-2.001755)(7.4596896,-1.2017549514770507)
\pscircle[linecolor=black, linewidth=0.04, dimen=outer](7.4596896,-0.901755){0.36}
\rput[bl](11.51969,-0.0017549514){\LARGE{$0$}}
\rput[bl](7.21969,-1.001755){\Large{$m$}}
\rput[bl](8.759689,0.35824504){\Large{$i$}}
\rput[bl](1.6196896,-0.94175494){\Large{$m$}}
\rput[bl](2.7796896,0.538245){\Large{$i$}}
\psline[linecolor=black, linewidth=0.04](0.55968964,-1.2017549)(0.019689636,1.818245)
\psline[linecolor=black, linewidth=0.04](2.0996897,-0.5817549)(2.8396897,0.35824504)
\psline[linecolor=black, linewidth=0.04](2.9596896,0.99824506)(3.4196897,1.738245)
\psline[linecolor=black, linewidth=0.04](1.6796896,-0.60175496)(1.3196896,1.7782451)
\psline[linecolor=black, linewidth=0.04](7.29969,-0.6417549)(5.4396896,1.738245)
\psline[linecolor=black, linewidth=0.04](6.1596894,-1.241755)(6.1596894,-0.48175496)(6.4996896,0.03824505)
\psline[linecolor=black, linewidth=0.04](6.73969,0.45824504)(7.4796896,1.6782451)
\psline[linecolor=black, linewidth=0.04](7.6596894,-0.60175496)(8.759689,0.19824505)
\psline[linecolor=black, linewidth=0.04](8.97969,0.81824505)(9.53969,1.6382451)
\end{pspicture}
}
\end{aligned}
\end{eqnarray*}
\end{itemize}
\end{proposition}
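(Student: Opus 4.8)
The plan is to read both displayed identities as the string-diagram form of the elementary composition-algebra fact that left and right multiplication by a ``pure'' element are skew-adjoint for the self-duality $\bigcdot = \bigcdot_A$ of $A$. Precisely: if one bends the output strand of $m$ downwards through the cup (using the snake equations \eqref{radjobv}, which make this a reversible move), part~(i) is the assertion that the morphism $\Phi := \bigcdot\circ(m\otimes 1_A)\circ(i\otimes 1_A\otimes 1_A)\colon V\otimes A\otimes A\to I$ satisfies $\Phi + \Phi\circ(1_V\otimes c_{A,A}) = 0$ (the crossing in the second summand of the display being exactly the braiding that transposes the two free $A$-legs), and part~(ii) is the same statement with $m$ pre-composed with $i$ on its \emph{second} input instead; equivalently, $(m(ix,b))\bigcdot c = -\,(m(ix,c))\bigcdot b$ and $(m(b,iy))\bigcdot c = -\,(m(c,iy))\bigcdot b$.

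For (i) I would start from the polarized norm-multiplicativity axiom \eqref{ca3}, which in these terms reads $(m(x,y))\bigcdot(m(x',y')) + (m(x,y'))\bigcdot(m(x',y)) = 2\,(x\bigcdot x')(y\bigcdot y')$. Substitute the identity $e$ into the $x'$-strand; the two multiplication nodes $m(e,y)$, $m(e,y')$ then collapse by the two-sided unit axiom \eqref{ca2}, turning \eqref{ca3} into the ``moving-an-argument'' identity $(m(x,y))\bigcdot y' + (m(x,y'))\bigcdot y = 2\,(x\bigcdot e)(y\bigcdot y')$ --- the composition-algebra analogue of Corollary~\ref{stringswingcor4}. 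Now pre-compose the $x$-strand with $i$: since $V$ was obtained by splitting the idempotent with $p = \bigcdot\circ(1_A\otimes e)$, the direct-sum equations \eqref{directsumstring1}--\eqref{directsumstring2} give $p\circ i = 0$, hence $(ix)\bigcdot e = 0$ and the right-hand side disappears; a use of the symmetry of $\bigcdot$ on the second term, together with bending $m$'s output back through the cup, puts the identity into exactly the shape drawn in (i). Part~(ii) follows in the same way, substituting $e$ into the $y'$-strand of \eqref{ca3} instead (equivalently, by the left-to-right reflection of the diagram, which is legitimate because $A\dashv A$ is a \emph{symmetric} self-duality, cf.\ Proposition~\ref{symadj}).

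The only real work is bookkeeping: in the string-diagram version of \eqref{ca3} one must keep track of which multiplication node survives the substitution $x' = e$ and slide the residual braidings past the unit node so as to present the result with the cups and $Y$ exactly as pictured; no deeper ingredient is needed. Looking ahead, although (i)--(ii) are stated with arbitrary free legs in $A$, only their restrictions along $i\otimes i$ are used later, and together with the antisymmetry \eqref{castringasym} and the ``throw-a-leg'' identity \eqref{cathrowleg} of Proposition~\ref{cawedgeprop} they give the full alternating symmetry of $(m(ix,iy))\bigcdot iz$, which is what makes $q\circ m\circ(i\otimes i)$ behave as a vector product on $V$.
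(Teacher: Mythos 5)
Your proof is correct and is essentially the paper's own argument: the paper likewise specializes axiom \eqref{ca3} by placing $e$ on one input strand and $i$ on another, collapses the resulting $m(e,-)$ (resp.\ $m(-,e)$) nodes via the unit axiom \eqref{ca2}, and kills the right-hand side using $\bigcdot\circ(i\ox e)=0=\bigcdot\circ(e\ox i)$ from \eqref{directsumstring1}. The only (immaterial) difference is that for (ii) you put $e$ on the fourth strand and $i$ on the third, whereas the paper does the reverse; both choices yield the stated identity.
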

\begin{proof}
For (i) put $i$ on the first and $e$ on the second input strings of axiom \eqref{ca3}
for a composition algebra. Then use \eqref{ca2} and \eqref{directsumstring1} to obtain the result.

For (ii) put $i$ on the last and $e$ on the third input strings of axiom \eqref{ca3}. Then use \eqref{ca2} and \eqref{directsumstring1} to obtain the result.
\end{proof}

\begin{proposition}\label{m+switchm}
 \begin{eqnarray*}
\begin{aligned}
\psscalebox{0.6 0.6} % Change this value to rescale the drawing.
{
\begin{pspicture}(0,-2.0304296)(13.137447,2.0304296)
\pscircle[linecolor=black, linewidth=0.04, dimen=outer](4.617447,0.7300307){0.32}
\pscircle[linecolor=black, linewidth=0.04, dimen=outer](1.2574469,-0.44996932){0.32}
\psbezier[linecolor=black, linewidth=0.04](10.137447,0.45003068)(10.137447,-0.3499693)(11.337447,-0.3099693)(11.337447,0.49003068923950194)
\rput[bl](11.217447,0.6100307){\Large{$i$}}
\rput[bl](7.677447,-0.08996931){\LARGE{$=$}}
\pscircle[linecolor=black, linewidth=0.04, dimen=outer](2.277447,0.7500307){0.32}
\pscircle[linecolor=black, linewidth=0.04, dimen=outer](0.4774469,0.7300307){0.32}
\pscircle[linecolor=black, linewidth=0.04, dimen=outer](6.577447,0.7100307){0.32}
\pscircle[linecolor=black, linewidth=0.04, dimen=outer](5.537447,-0.8099693){0.32}
\pscircle[linecolor=black, linewidth=0.04, dimen=outer](10.757447,-0.7099693){0.32}
\pscircle[linecolor=black, linewidth=0.04, dimen=outer](10.137447,0.7300307){0.32}
\pscircle[linecolor=black, linewidth=0.04, dimen=outer](11.337447,0.7700307){0.32}
\psline[linecolor=black, linewidth=0.04](10.117447,1.0100307)(10.097446,2.0300307)
\psline[linecolor=black, linewidth=0.04](11.357447,1.0300307)(11.337447,2.0300307)
\psline[linecolor=black, linewidth=0.04](10.757447,-1.0299693)(10.757447,-1.9899693)
\psline[linecolor=black, linewidth=0.04](0.4774469,1.0300307)(0.0174469,1.8500307)
\psline[linecolor=black, linewidth=0.04](2.337447,1.0500307)(2.9374468,1.8100307)
\psline[linecolor=black, linewidth=0.04](0.4774469,0.4300307)(1.0374469,-0.22996931)
\psline[linecolor=black, linewidth=0.04](2.2174468,0.4700307)(1.4974469,-0.2699693)
\psline[linecolor=black, linewidth=0.04](1.2574469,-0.7699693)(1.2574469,-1.9699693)
\psline[linecolor=black, linewidth=0.04](4.597447,1.0300307)(4.177447,1.8300307)
\psline[linecolor=black, linewidth=0.04](6.677447,0.97003067)(7.197447,1.8100307)
\psline[linecolor=black, linewidth=0.04](5.517447,-1.1299694)(5.537447,-2.0299692)
\psline[linecolor=black, linewidth=0.04](4.677447,0.4100307)(6.477447,-0.06996931)(5.717447,-0.5699693)
\psline[linecolor=black, linewidth=0.04](6.477447,0.45003068)(5.8374467,0.23003069)
\psline[linecolor=black, linewidth=0.04](5.417447,0.05003069)(4.737447,-0.22996931)
\psline[linecolor=black, linewidth=0.04](4.717447,-0.22996931)(5.2974467,-0.58996934)
\rput[bl](10.0174465,0.6100307){\Large{$i$}}
\rput[bl](6.417447,0.5700307){\Large{$i$}}
\rput[bl](4.477447,0.6100307){\Large{$i$}}
\rput[bl](2.117447,0.6100307){\Large{$i$}}
\rput[bl](0.3374469,0.59003067){\Large{$i$}}
\rput[bl](5.2974467,-0.96996933){\Large{$m$}}
\rput[bl](10.597446,-0.8299693){\Large{$e$}}
\rput[bl](1.0174469,-0.6099693){\Large{$m$}}
\rput[bl](3.3974469,-0.08996931){\LARGE{$+$}}
\rput[bl](8.697447,-0.1699693){\LARGE{$-2$}}
\end{pspicture}
}
\end{aligned}
\end{eqnarray*}
\end{proposition}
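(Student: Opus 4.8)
The plan is to evaluate the left-hand side using the splitting $A\cong I\oplus V$ of \eqref{directsum}. Write $p,q$ for the projections and $e,i$ for the sections, so that $1_A = e\circ p + i\circ q$ by \eqref{directsumstring2}, and write $\varepsilon_V = \varepsilon_A\circ(i\ox i)$ for the counit of the self-duality $V\dashv V$ of Proposition~\ref{Vselfdual}. Then $(m + m\circ c_{A,A})\circ(i\ox i)\colon V\ox V\to A$ equals $e\circ\big(p\circ(m+m\circ c_{A,A})\circ(i\ox i)\big) + i\circ\big(q\circ(m+m\circ c_{A,A})\circ(i\ox i)\big)$, so it is enough to prove that the first component is $-2\,\varepsilon_V$ and the second is $0$; the right-hand side of the proposition is precisely $e\circ(-2\,\varepsilon_V) + i\circ 0$.

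For the first ($I$-valued) component I would substitute $e$ into the second and fourth input legs of axiom \eqref{ca3} and $i$ into the first and third. The unit laws \eqref{ca0} and \eqref{ca2} erase the $e$'s occurring inside the products, while \eqref{directsumstring1} gives $\varepsilon_A\circ(i\ox e) = p\circ i = 0$, so the right-hand side of \eqref{ca3} collapses to $0$ and what remains reads $p\circ m\circ(i\ox i) + \varepsilon_V = 0$. Running the same substitution starting from $m\circ c_{A,A}\circ(i\ox i)$ --- using naturality of $c$ and the fact that the symmetric counit $\varepsilon_A$ is unchanged on precomposition with $c_{A,A}$ --- gives $p\circ m\circ c_{A,A}\circ(i\ox i) = -\varepsilon_V$ as well, so the first component is $-2\,\varepsilon_V$. (This is the same device already used in the proof of Proposition~\ref{proponmandi}.)

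For the second ($V$-valued) component, set $\phi := \varepsilon_A\circ(m\ox 1)\circ(i\ox i\ox i)\colon V^{\ox 3}\to I$, the diagrammatic ``$\langle i(u)\,i(v),\,i(w)\rangle$''. Two relations that are special cases of Proposition~\ref{proponmandi}(i) and (ii) (obtained by feeding $e$ into a single input leg of \eqref{ca3} in two different positions, erasing the $e$'s and using $i\perp e$) show that $\phi$ is annihilated by $1 + c_{V,V}$ acting on the last two tensor factors and by $1 + \tau$ acting on the outer two factors, where $\tau$ is the braid word (some lift of the transposition $(1\,3)$, e.g.\ $\sigma_1\sigma_2\sigma_1$) produced by that substitution. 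A short braid calculation --- from $\phi\circ\sigma_2 = -\phi$ and $\phi\circ\sigma_1\sigma_2\sigma_1 = -\phi$ one gets $\phi\circ\sigma_2\sigma_1\sigma_2 = -\phi$ by the braid relation, hence $-\phi\circ\sigma_1\sigma_2 = -\phi$, hence $\phi\circ\sigma_1 = \phi\circ\sigma_2^{-1} = -\phi$, using only the ``geometry of braiding'' and invertibility of $\sigma_2$ --- upgrades this to $\phi\circ(c_{V,V}\ox 1) = -\phi$. By naturality of $c$ this is the statement $\varepsilon_A\circ\big((m + m\circ c_{A,A})\ox 1\big)\circ(i\ox i\ox i) = 0$, i.e.\ $(m+m\circ c_{A,A})\circ(i\ox i)$ is $\varepsilon_A$-orthogonal to the image of $i$; since it is also orthogonal to $e$ by \eqref{directsumstring1}, non-degeneracy of $\varepsilon_V$ forces $q\circ(m + m\circ c_{A,A})\circ(i\ox i) = 0$, completing the argument. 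The one point requiring real care is the bookkeeping of crossings: one must check that the $e$-substitutions into \eqref{ca3}, together with the uses of symmetry of $\varepsilon_A$, deliver exactly the braid words $\sigma_2$ and $\sigma_1\sigma_2\sigma_1$ (with the correct orientations and positions), after which the symmetric-group-style manipulation is valid verbatim in the braided setting.
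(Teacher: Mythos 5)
Your proposal is correct and takes essentially the same route as the paper: both arguments decompose $n=(m+m\circ c)\circ(i\ox i)$ along the splitting $A\cong I\oplus V$, identify the $I$-component as $-2\varepsilon_V$ by feeding units into axiom \eqref{ca3}, and kill the $V$-component by combining the two skew-adjointness relations of Proposition~\ref{proponmandi} three times --- your braid-word manipulation is precisely the paper's chain of three diagram equalities, and the conclusion via orthogonality to $e$ and non-degeneracy of $\varepsilon_V$ matches the paper's use of \eqref{directsumstring2} and the formula for $q$. The crossing bookkeeping you flag is genuinely the only delicate point, but it is harmless here because each relation of the form $\phi\circ\sigma=-\phi$ immediately yields $\phi\circ\sigma^{-1}=-\phi$, so the Yang--Baxter relation is all that is needed.
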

\begin{proof}
Let us temporarily write $n : V\ox V\to A$ for the left-hand side of the equation.
Using Proposition~\ref{proponmandi} and the counit symmetry, 
we have the following calculation. The first step uses Proposition~\ref{proponmandi} (i)
and symmetry, the second step uses Proposition~\ref{proponmandi} (ii) and symmetry,
while the last step uses Proposition~\ref{proponmandi} (i) again.  
 \begin{eqnarray*}
\begin{aligned}
\psscalebox{0.6 0.6} % Change this value to rescale the drawing.
{
\begin{pspicture}(0,-1.6982949)(21.39731,1.6982949)
\pscircle[linecolor=black, linewidth=0.04, dimen=outer](0.5973105,0.4370378){0.34}
\pscircle[linecolor=black, linewidth=0.04, dimen=outer](1.9973105,0.3970378){0.34}
\pscircle[linecolor=black, linewidth=0.04, dimen=outer](3.3373106,0.4170378){0.34}
\pscircle[linecolor=black, linewidth=0.04, dimen=outer](1.2773105,-0.5829622){0.34}
\psbezier[linecolor=black, linewidth=0.04](1.2973105,-0.9029622)(1.2973105,-1.7029622)(3.2973106,-1.6429622)(3.2973106,-0.8429621982574463)
\psline[linecolor=black, linewidth=0.04](3.2973106,-0.9029622)(3.3173106,0.0970378)
\psline[linecolor=black, linewidth=0.04](1.4773105,-0.3429622)(2.0373106,0.0970378)
\psline[linecolor=black, linewidth=0.04](1.0373105,-0.3629622)(0.5773105,0.1170378)
\psline[linecolor=black, linewidth=0.04](0.55731046,0.7370378)(0.037310485,1.5170377)(0.037310485,1.5170377)
\psline[linecolor=black, linewidth=0.04](3.3573105,0.7370378)(4.0173106,1.5370378)
\psline[linecolor=black, linewidth=0.04](1.9973105,0.7170378)(1.9773105,1.5370378)
\rput[bl](15.43731,-0.022962198){\LARGE{$=$}}
\rput[bl](19.47731,0.8370378){\Large{$i$}}
\rput[bl](1.0773104,-0.7229622){\Large{$m$}}
\rput[bl](4.3773103,-0.1829622){\LARGE{$=$}}
\rput[bl](17.91731,-0.8429622){\Large{$m$}}
\rput[bl](3.1573105,0.2970378){\Large{$i$}}
\rput[bl](1.8573105,0.2770378){\Large{$i$}}
\rput[bl](0.4373105,0.2970378){\Large{$i$}}
\pscircle[linecolor=black, linewidth=0.04, dimen=outer](8.077311,-0.6629622){0.34}
\pscircle[linecolor=black, linewidth=0.04, dimen=outer](8.05731,0.5770378){0.34}
\pscircle[linecolor=black, linewidth=0.04, dimen=outer](9.417311,0.6170378){0.34}
\pscircle[linecolor=black, linewidth=0.04, dimen=outer](6.6973104,0.5770378){0.34}
\pscircle[linecolor=black, linewidth=0.04, dimen=outer](19.65731,0.9770378){0.34}
\psbezier[linecolor=black, linewidth=0.04](6.7373104,-0.8629622)(6.7373104,-1.6629622)(8.09731,-1.7829622)(8.09731,-0.9829621982574462)
\psline[linecolor=black, linewidth=0.04](6.7373104,-0.9229622)(6.6773105,0.23703781)
\psline[linecolor=black, linewidth=0.04](6.6973104,0.8970378)(8.037311,1.5970378)(8.077311,1.5770378)
\psline[linecolor=black, linewidth=0.04](7.9773107,0.8770378)(7.4173107,1.1370378)
\psline[linecolor=black, linewidth=0.04](7.0773106,1.2370378)(6.0973105,1.5570378)
\psline[linecolor=black, linewidth=0.04](7.8773103,-0.4429622)(8.09731,0.2570378)
\psline[linecolor=black, linewidth=0.04](8.297311,-0.4429622)(9.417311,0.2970378)
\psline[linecolor=black, linewidth=0.04](9.43731,0.9370378)(9.85731,1.5970378)
\rput[bl](16.45731,-0.0029621983){\LARGE{$-$}}
\rput[bl](7.8773103,-0.7829622){\Large{$m$}}
\rput[bl](5.3973103,-0.1229622){\LARGE{$-$}}
\rput[bl](9.23731,0.4770378){\Large{$i$}}
\rput[bl](7.8773103,0.4570378){\Large{$i$}}
\rput[bl](6.4973106,0.4370378){\Large{$i$}}
\pscircle[linecolor=black, linewidth=0.04, dimen=outer](14.257311,0.9970378){0.34}
\pscircle[linecolor=black, linewidth=0.04, dimen=outer](13.05731,0.077037804){0.34}
\pscircle[linecolor=black, linewidth=0.04, dimen=outer](12.05731,1.0370378){0.34}
\pscircle[linecolor=black, linewidth=0.04, dimen=outer](13.297311,-0.7029622){0.34}
\psbezier[linecolor=black, linewidth=0.04](13.31731,-1.0029622)(13.31731,-1.8029622)(14.677311,-1.9229622)(14.677311,-1.1229621982574463)
\psline[linecolor=black, linewidth=0.04](14.677311,-1.1829622)(14.63731,0.2970378)(12.217311,0.7570378)(12.19731,0.7570378)
\psline[linecolor=black, linewidth=0.04](13.077311,-0.5229622)(13.05731,-0.2429622)
\psline[linecolor=black, linewidth=0.04](13.01731,0.3770378)(12.997311,0.5170378)
\psline[linecolor=black, linewidth=0.04](12.997311,0.7570378)(12.93731,1.6970378)
\psline[linecolor=black, linewidth=0.04](11.837311,1.2770379)(11.377311,1.6170378)
\psline[linecolor=black, linewidth=0.04](13.497311,-0.4629622)(14.05731,0.2770378)
\psline[linecolor=black, linewidth=0.04](14.23731,0.6770378)(14.1573105,0.4970378)
\psline[linecolor=black, linewidth=0.04](14.31731,1.2970378)(14.797311,1.6370378)
\rput[bl](13.117311,-0.8229622){\Large{$m$}}
\rput[bl](14.05731,0.8570378){\Large{$i$}}
\rput[bl](12.837311,-0.042962197){\Large{$i$}}
\rput[bl](11.85731,0.9170378){\Large{$i$}}
\rput[bl](10.757311,-0.102962196){\LARGE{$=$}}
\pscircle[linecolor=black, linewidth=0.04, dimen=outer](18.49731,0.9770378){0.34}
\pscircle[linecolor=black, linewidth=0.04, dimen=outer](16.99731,0.9770378){0.34}
\pscircle[linecolor=black, linewidth=0.04, dimen=outer](18.13731,-0.7229622){0.34}
\psbezier[linecolor=black, linewidth=0.04](18.15731,-1.0629622)(18.15731,-1.8629622)(19.73731,-1.7429622)(19.73731,-0.9429621982574463)
\psline[linecolor=black, linewidth=0.04](19.67731,0.6370378)(19.73731,-1.0029622)
\psline[linecolor=black, linewidth=0.04](18.29731,-0.4629622)(18.53731,0.077037804)(16.99731,0.6570378)(16.99731,0.6570378)
\psline[linecolor=black, linewidth=0.04](17.95731,-0.5029622)(17.53731,0.017037801)(17.91731,0.2170378)
\psline[linecolor=black, linewidth=0.04](18.13731,0.3770378)(18.517311,0.6770378)
\psline[linecolor=black, linewidth=0.04](18.517311,1.2570378)(18.47731,1.6370378)
\psline[linecolor=black, linewidth=0.04](19.65731,1.2770379)(20.097311,1.6170378)
\psline[linecolor=black, linewidth=0.04](16.87731,1.2570378)(16.31731,1.6370378)(16.37731,1.5770378)
\rput[bl](18.31731,0.8570378){\Large{$i$}}
\rput[bl](16.81731,0.8570378){\Large{$i$}}
\end{pspicture}
}
\end{aligned}
\end{eqnarray*}
Consequently, 
 \begin{eqnarray*}
\begin{aligned}
\psscalebox{0.6 0.6} % Change this value to rescale the drawing.
{
\begin{pspicture}(0,-1.3349096)(7.6574016,1.3349096)
\pscircle[linecolor=black, linewidth=0.04, dimen=outer](2.4174018,-0.4595769){0.34}
\pscircle[linecolor=black, linewidth=0.04, dimen=outer](1.0574018,-0.3395769){0.34}
\rput[bl](2.2774017,-0.5595769){\Large{$i$}}
\psbezier[linecolor=black, linewidth=0.04](1.0774018,-0.6395769)(1.0774018,-1.4395769)(2.4374018,-1.5595769)(2.4374018,-0.7595769119262695)
\rput[bl](3.7974017,-0.2395769){\LARGE{$=$}}
\rput[bl](0.83740175,-0.47957692){\Large{$n$}}
\psline[linecolor=black, linewidth=0.04](0.8574017,-0.15957691)(0.017401733,1.3204231)
\psline[linecolor=black, linewidth=0.04](1.2974018,-0.13957691)(1.9574018,1.3004231)(1.9774017,1.280423)
\psline[linecolor=black, linewidth=0.04](2.4974017,-0.17957692)(3.0574017,1.3004231)
\rput[bl](5.4574018,-0.21957691){\LARGE{$0$}}
\end{pspicture}
}
\end{aligned}
\end{eqnarray*}

Moreover, using Proposition~\ref{proponmandi} (i) twice, and then \eqref{ca2} and symmetry, we obtain
 \begin{eqnarray*}
\begin{aligned}
\psscalebox{0.6 0.6} % Change this value to rescale the drawing.
{
\begin{pspicture}(0,-1.7476664)(21.077402,1.7476664)
\pscircle[linecolor=black, linewidth=0.04, dimen=outer](2.4174018,-0.35233366){0.34}
\pscircle[linecolor=black, linewidth=0.04, dimen=outer](1.0574018,-0.23233368){0.34}
\rput[bl](19.157402,-0.47233367){\Large{$i$}}
\psbezier[linecolor=black, linewidth=0.04](1.0774018,-0.5323337)(1.0774018,-1.3323337)(2.4374018,-1.4523337)(2.4374018,-0.6523336791992187)
\rput[bl](3.2774017,-0.17233367){\LARGE{$=$}}
\rput[bl](0.83740175,-0.37233368){\Large{$n$}}
\psline[linecolor=black, linewidth=0.04](0.8574017,-0.05233368)(0.017401733,1.4276663)
\psline[linecolor=black, linewidth=0.04](1.2974018,-0.03233368)(1.9574018,1.4076663)(1.9774017,1.3876663)
\rput[bl](2.2374017,-0.47233367){\Large{$e$}}
\rput[bl](10.197402,-0.21233368){\LARGE{$-$}}
\rput[bl](5.0774016,-0.25233367){\LARGE{$-$}}
\pscircle[linecolor=black, linewidth=0.04, dimen=outer](7.417402,-0.7723337){0.34}
\pscircle[linecolor=black, linewidth=0.04, dimen=outer](6.377402,0.24766631){0.34}
\pscircle[linecolor=black, linewidth=0.04, dimen=outer](9.077402,0.3276663){0.34}
\pscircle[linecolor=black, linewidth=0.04, dimen=outer](7.4974017,0.9876663){0.34}
\pscircle[linecolor=black, linewidth=0.04, dimen=outer](12.617402,-0.73233366){0.34}
\pscircle[linecolor=black, linewidth=0.04, dimen=outer](11.697402,-0.07233368){0.34}
\pscircle[linecolor=black, linewidth=0.04, dimen=outer](14.277402,0.36766633){0.34}
\pscircle[linecolor=black, linewidth=0.04, dimen=outer](12.657402,1.0276663){0.34}
\psbezier[linecolor=black, linewidth=0.04](7.437402,-1.0523337)(7.437402,-1.8523337)(8.797401,-1.9723337)(8.797401,-1.1723336791992187)
\psbezier[linecolor=black, linewidth=0.04](12.657402,-1.0323337)(12.657402,-1.8323337)(14.017402,-1.9523337)(14.017402,-1.1523336791992187)
\psline[linecolor=black, linewidth=0.04](7.5574017,0.64766634)(8.817402,-1.2323337)
\psline[linecolor=black, linewidth=0.04](6.4574018,-0.07233368)(7.2174015,-0.5723337)(7.2174015,-0.5923337)
\psline[linecolor=black, linewidth=0.04](6.237402,0.50766635)(5.5374017,1.6476663)(5.5174017,1.6476663)
\psline[linecolor=black, linewidth=0.04](7.4774017,1.2876663)(7.4774017,1.7476664)
\psline[linecolor=black, linewidth=0.04](7.6374016,-0.5323337)(8.017402,-0.31233367)
\psline[linecolor=black, linewidth=0.04](8.317402,-0.25233367)(9.077402,0.007666321)(9.077402,-0.03233368)
\psline[linecolor=black, linewidth=0.04](14.017402,-1.1723337)(12.677402,0.7276663)
\psline[linecolor=black, linewidth=0.04](12.597402,1.3476663)(11.657402,1.7276664)
\psline[linecolor=black, linewidth=0.04](12.397402,-0.5323337)(11.757401,-0.37233368)(11.777402,-0.41233367)
\psline[linecolor=black, linewidth=0.04](11.697402,0.24766631)(12.637402,0.46766633)
\psline[linecolor=black, linewidth=0.04](12.997402,0.5676663)(13.497402,0.70766634)(14.397402,1.6276664)(14.397402,1.6276664)
\psline[linecolor=black, linewidth=0.04](12.877401,-0.5123337)(13.2174015,-0.35233366)
\psline[linecolor=black, linewidth=0.04](13.517402,-0.23233368)(14.317402,0.047666322)
\rput[bl](7.1974015,-0.91233367){\Large{$m$}}
\rput[bl](16.497402,-0.23233368){\LARGE{$-2$}}
\rput[bl](12.397402,-0.85233366){\Large{$m$}}
\rput[bl](15.257401,-0.19233368){\LARGE{$=$}}
\rput[bl](14.057402,0.20766632){\Large{$e$}}
\rput[bl](12.457401,0.88766634){\Large{$i$}}
\rput[bl](11.497402,-0.19233368){\Large{$i$}}
\rput[bl](7.277402,0.8476663){\Large{$i$}}
\rput[bl](6.1574016,0.10766632){\Large{$i$}}
\rput[bl](8.857402,0.16766632){\Large{$e$}}
\rput[bl](17.757402,-0.49233368){\Large{$i$}}
\psbezier[linecolor=black, linewidth=0.04](17.97478,-0.66732454)(18.039244,-1.4647229)(19.404491,-1.4747412)(19.340025,-0.6773428278559623)
\pscircle[linecolor=black, linewidth=0.04, dimen=outer](19.337402,-0.35233366){0.34}
\pscircle[linecolor=black, linewidth=0.04, dimen=outer](17.977402,-0.35233366){0.34}
\psline[linecolor=black, linewidth=0.04](17.9374,-0.07233368)(17.837402,1.5876663)
\psline[linecolor=black, linewidth=0.04](19.317402,-0.07233368)(19.257402,1.5676663)
\end{pspicture}
}
\end{aligned}
\end{eqnarray*}
Now using the first equation of the direct sum property \eqref{directsumstring2} and the formula for $q$ in Proposition~\ref{Vselfdual}, we see that $n$ is equal to the right-hand side of the equation in our Proposition. 
\end{proof}
\begin{proposition}\label{AtoV}
For any composition algebra $A$ in $\CV$, the object $V$ as defined by \eqref{directsum}, equipped with the self-duality of Proposition~\ref{Vselfdual} and the wedge \eqref{wedgeforV}, is a vector product algebra in $\CV$. 
\end{proposition}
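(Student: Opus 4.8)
The plan is to verify the three identities \eqref{stringasym}, \eqref{stringcycsym}, \eqref{stringstrange} for $V$ equipped with the counit $\bigcdot_V$ of the self-duality of Proposition~\ref{Vselfdual} and the wedge $\wedge_V$ of \eqref{wedgeforV}. The engine of everything is the single formula
\[
m\circ(i\ox i)\ =\ i\circ\wedge_V\ -\ e\circ\bigcdot_V ,
\]
call it $(\star)$: it is obtained by adding the defining relation \eqref{2wedge} (restricted along $i\ox i$) to Proposition~\ref{m+switchm} and halving, then using the commuting square \eqref{wedgerestricts} --- equivalently, that $\wedge_A$ factors through $i$, which follows from \eqref{pwedge} --- together with $\bigcdot_V=\bigcdot_A\circ(i\ox i)$ from Proposition~\ref{Vselfdual}. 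I shall also use the auxiliary identities $\bigcdot_A\circ(e\ox i)=0=\bigcdot_A\circ(i\ox e)$ and $\bigcdot_A\circ(e\ox e)=1_I$, which are just the biproduct relations $p\circ i=0$ and $p\circ e=1_I$ of the direct sum \eqref{directsum} (with $p=\bigcdot_A\circ(1_A\ox e)$), and the unit laws $m\circ(e\ox 1_A)=1_A=m\circ(1_A\ox e)$ of \eqref{ca2}.

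Axiom \eqref{stringasym} is immediate: by naturality of the braiding, $\wedge_V\circ c_{V,V}=q\circ\wedge_A\circ(i\ox i)\circ c_{V,V}=q\circ\wedge_A\circ c_{A,A}\circ(i\ox i)=-q\circ\wedge_A\circ(i\ox i)=-\wedge_V$, the sign coming from \eqref{castringasym}. For \eqref{stringcycsym}, note that by $(\star)$ and $\bigcdot_V=\bigcdot_A\circ(i\ox i)$ the trilinear form $\bigcdot_V\circ(\wedge_V\ox 1_V)$ is the restriction along three copies of $i$ of the $A$-level form $\bigcdot_A\circ(\wedge_A\ox 1_A)$; the latter is alternating --- antisymmetric in its first two arguments by \eqref{castringasym}, and in its last two by \eqref{cathrowleg} with the symmetry of $\bigcdot_A$ (or, equivalently, directly from \eqref{ca3} with $e$ placed in one input, using $\bigcdot_A\circ(e\ox i)=0$) --- hence fully alternating, which is exactly \eqref{stringcycsym} for $V$.

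The real work is \eqref{stringstrange}. Once \eqref{stringasym} and \eqref{stringcycsym} hold for $V$, the equivalence of Corollary~\ref{stringswingcor4} --- whose proof uses only the self-duality on $V$ and those two axioms --- replaces \eqref{stringstrange} by a closed four-legged identity; pairing both sides of \eqref{stringstrange} with a fourth variable and rewriting each associator $((x\wedge_V y)\wedge_V z)\bigcdot_V w$ as a Gram expression $(x\wedge_V y)\bigcdot_V(z\wedge_V w)$ via \eqref{scaltrip}, while using non-degeneracy (the snake equations) for the converse, one sees that \eqref{stringstrange} for $V$ is equivalent to the Lagrange-type identity
\[
(x\wedge_V y)\bigcdot_V(z\wedge_V w)+(y\wedge_V z)\bigcdot_V(w\wedge_V x)=2(x\bigcdot_V z)(y\bigcdot_V w)-(x\bigcdot_V y)(z\bigcdot_V w)-(y\bigcdot_V z)(x\bigcdot_V w) ,
\]
call it $(L)$. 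To prove $(L)$, precompose the composition axiom \eqref{ca3} with four copies of $i$ and push the $i$'s through the internal crossings by naturality of the braiding; applying $(\star)$, the unit laws, and the vanishings $\bigcdot_A\circ(e\ox i)=0=\bigcdot_A\circ(i\ox e)$, $\bigcdot_A\circ(e\ox e)=1_I$, each of the two left-hand terms of \eqref{ca3} collapses to an expression of the form $(x_a\wedge_V x_b)\bigcdot_V(x_c\wedge_V x_d)+(x_a\bigcdot_V x_b)(x_c\bigcdot_V x_d)$ (the two mixed cross-terms drop out), while the right-hand side of \eqref{ca3} becomes $2(x_1\bigcdot_V x_2)(x_3\bigcdot_V x_4)$; relabelling $(x_1,x_2,x_3,x_4)=(x,z,y,w)$ and tidying with \eqref{stringasym} and the symmetry of $\bigcdot_V$ turns this equation into $(L)$.

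The step I expect to be the main obstacle is the string-diagram bookkeeping in the last paragraph: carefully tracking the crossings inside \eqref{ca3} when inserting the four copies of $i$ and when expanding via $(\star)$, keeping the signs from \eqref{stringasym} straight, identifying which of the four $\{i,e\}$-insertions into $\bigcdot_A$ survive, and matching the outcome to the correctly permuted form of $(L)$. Setting up $(\star)$ itself and the biproduct and unit identities comes first but is routine; granted those, the three vector-product-algebra axioms for $V$ fall out essentially formally.
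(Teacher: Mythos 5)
Your proposal is correct and follows essentially the same route as the paper: the first two axioms are read off from Proposition~\ref{cawedgeprop} exactly as in the text, and the third is proved in the equivalent four-legged form of Corollary~\ref{stringswingcor4} by restricting \eqref{ca3} along $i^{\ox 4}$ and expanding the multiplications. Your identity $(\star)$, $m\circ(i\ox i)=i\circ\wedge_V-e\circ\bigcdot_V$, is just a cleaner packaging of the paper's combination of \eqref{2wedge} with Proposition~\ref{m+switchm} (the paper instead multiplies by $4$, expands, adds the braided copy and factors the negative terms), and it makes the cross-term cancellation via $\bigcdot_A\circ(e\ox i)=0$ pleasantly transparent.
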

\begin{proof} It remains to prove the three axioms for a vertex algebra.
The first two axioms are taken care of by Proposition~\ref{cawedgeprop}: \eqref{stringasym} follows from \eqref{castringasym} while \eqref{stringcycsym} 
follows from \eqref{cathrowleg} and symmetry of the self duality.

To prove the remaining axiom \eqref{stringstrange}, we prove the equivalent form
in Corollary~\ref{stringswingcor4}.
Begin with $4$ times the first term on the left-hand side of \eqref{stringswingcor4}
where, for now, the strings are all labelled by $A$.
Substitute the formula \eqref{2wedge} for $\wedge$ in the two places to obtain
a sum of four terms, two of which are negative.
 \begin{eqnarray*}
\begin{aligned}
\psscalebox{0.6 0.6} % Change this value to rescale the drawing.
{
\begin{pspicture}(0,-1.1025703)(18.56,1.1025703)
\psline[linecolor=black, linewidth=0.04](0.54,1.0825521)(1.02,-0.05744793)(1.64,1.0825521)(1.66,1.0825521)
\psline[linecolor=black, linewidth=0.04](2.5,1.0625521)(2.98,0.0025520707)(3.56,1.0825521)
\psbezier[linecolor=black, linewidth=0.04](1.04,-0.05744793)(1.04,-0.8574479)(3.0,-0.77744794)(3.0,0.022552070617675782)
\pscircle[linecolor=black, linewidth=0.04, dimen=outer](5.2,-0.15744793){0.34}
\pscircle[linecolor=black, linewidth=0.04, dimen=outer](6.58,-0.15744793){0.34}
\pscircle[linecolor=black, linewidth=0.04, dimen=outer](8.58,-0.13744792){0.34}
\pscircle[linecolor=black, linewidth=0.04, dimen=outer](9.98,-0.13744792){0.34}
\psbezier[linecolor=black, linewidth=0.04](5.2,-0.49744794)(5.2,-1.2974479)(6.6,-1.237448)(6.6,-0.43744792938232424)
\psline[linecolor=black, linewidth=0.04](5.0,0.08255207)(4.5,1.0625521)
\psline[linecolor=black, linewidth=0.04](5.42,0.08255207)(5.74,1.0425521)
\psline[linecolor=black, linewidth=0.04](6.76,0.12255207)(7.26,1.0825521)
\psbezier[linecolor=black, linewidth=0.04](8.62,-0.47744793)(8.62,-1.2774479)(10.0,-1.237448)(10.0,-0.43744792938232424)
\psline[linecolor=black, linewidth=0.04](8.42,0.14255208)(8.1,0.5625521)(9.0,1.0625521)
\psline[linecolor=black, linewidth=0.04](7.82,1.0625521)(8.36,0.84255207)
\psline[linecolor=black, linewidth=0.04](8.62,0.72255206)(9.12,0.5025521)(8.76,0.16255207)
\psline[linecolor=black, linewidth=0.04](9.72,0.04255207)(9.3,0.54255205)(10.38,1.0425521)
\psline[linecolor=black, linewidth=0.04](9.34,1.0425521)(9.72,0.8825521)
\psline[linecolor=black, linewidth=0.04](10.02,0.74255204)(10.42,0.5625521)(10.2,0.08255207)
\pscircle[linecolor=black, linewidth=0.04, dimen=outer](12.02,-0.15744793){0.34}
\pscircle[linecolor=black, linewidth=0.04, dimen=outer](13.42,-0.15744793){0.34}
\psbezier[linecolor=black, linewidth=0.04](12.06,-0.49744794)(12.06,-1.2974479)(13.44,-1.257448)(13.44,-0.4574479293823242)
\psline[linecolor=black, linewidth=0.04](11.86,0.12255207)(11.54,0.54255205)(12.44,1.0425521)
\psline[linecolor=black, linewidth=0.04](11.26,1.0425521)(11.8,0.8225521)
\psline[linecolor=black, linewidth=0.04](12.06,0.7025521)(12.56,0.48255208)(12.2,0.14255208)
\pscircle[linecolor=black, linewidth=0.04, dimen=outer](15.2,-0.15744793){0.34}
\pscircle[linecolor=black, linewidth=0.04, dimen=outer](16.6,-0.15744793){0.34}
\psbezier[linecolor=black, linewidth=0.04](15.24,-0.49744794)(15.24,-1.2974479)(16.62,-1.257448)(16.62,-0.4574479293823242)
\psline[linecolor=black, linewidth=0.04](16.34,0.022552071)(15.92,0.5225521)(17.0,1.022552)
\psline[linecolor=black, linewidth=0.04](15.96,1.022552)(16.34,0.86255205)
\psline[linecolor=black, linewidth=0.04](16.64,0.72255206)(17.04,0.54255205)(16.82,0.06255207)
\psline[linecolor=black, linewidth=0.04](13.24,0.06255207)(12.76,1.022552)
\psline[linecolor=black, linewidth=0.04](13.58,0.14255208)(14.0,1.022552)
\psline[linecolor=black, linewidth=0.04](14.96,0.06255207)(14.6,1.022552)
\psline[linecolor=black, linewidth=0.04](15.4,0.10255207)(15.54,1.022552)
\psline[linecolor=black, linewidth=0.04](6.38,0.08255207)(6.14,1.0625521)
\rput[bl](0.0,0.08255207){\LARGE{$4$}}
\rput[bl](7.48,-0.13744792){\LARGE{$+$}}
\rput[bl](14.06,-0.17744793){\LARGE{$-$}}
\rput[bl](16.4,-0.25744793){\Large{$m$}}
\rput[bl](10.92,-0.17744793){\LARGE{$-$}}
\rput[bl](3.84,-0.17744793){\LARGE{$=$}}
\rput[bl](14.96,-0.29744792){\Large{$m$}}
\rput[bl](13.2,-0.27744794){\Large{$m$}}
\rput[bl](11.82,-0.27744794){\Large{$m$}}
\rput[bl](9.76,-0.25744793){\Large{$m$}}
\rput[bl](8.36,-0.23744793){\Large{$m$}}
\rput[bl](6.38,-0.25744793){\Large{$m$}}
\rput[bl](4.98,-0.23744793){\Large{$m$}}
\end{pspicture}
}
\end{aligned}
\end{eqnarray*}
Apply composition algebra \eqref{ca3} to each of the positive terms on the right-hand side
to obtain
 \begin{eqnarray*}
\begin{aligned}
\psscalebox{0.6 0.6} % Change this value to rescale the drawing.
{
\begin{pspicture}(0,-1.2984811)(19.0,1.2984811)
\pscircle[linecolor=black, linewidth=0.04, dimen=outer](5.64,-0.35335878){0.34}
\pscircle[linecolor=black, linewidth=0.04, dimen=outer](7.02,-0.35335878){0.34}
\pscircle[linecolor=black, linewidth=0.04, dimen=outer](9.02,-0.3333588){0.34}
\pscircle[linecolor=black, linewidth=0.04, dimen=outer](10.42,-0.3333588){0.34}
\psbezier[linecolor=black, linewidth=0.04](5.64,-0.6933588)(5.64,-1.4933587)(7.04,-1.4333588)(7.04,-0.6333587837219238)
\psline[linecolor=black, linewidth=0.04](5.44,-0.11335878)(4.88,0.9666412)
\psbezier[linecolor=black, linewidth=0.04](9.06,-0.6733588)(9.06,-1.4733588)(10.44,-1.4333588)(10.44,-0.6333587837219238)
\psline[linecolor=black, linewidth=0.04](10.198889,-0.15335879)(9.74,0.42664123)(10.92,1.0066413)
\psline[linecolor=black, linewidth=0.04](10.379197,0.558573)(10.892942,0.32147157)(10.586707,-0.108668245)
\pscircle[linecolor=black, linewidth=0.04, dimen=outer](12.46,-0.35335878){0.34}
\pscircle[linecolor=black, linewidth=0.04, dimen=outer](13.86,-0.35335878){0.34}
\psbezier[linecolor=black, linewidth=0.04](12.5,-0.6933588)(12.5,-1.4933587)(13.88,-1.4533588)(13.88,-0.6533587837219238)
\psline[linecolor=black, linewidth=0.04](12.3,-0.07335878)(11.98,0.3466412)(12.88,0.84664124)
\psline[linecolor=black, linewidth=0.04](11.7,0.84664124)(12.24,0.6266412)
\psline[linecolor=black, linewidth=0.04](12.5,0.5066412)(13.0,0.2866412)(12.64,-0.053358782)
\pscircle[linecolor=black, linewidth=0.04, dimen=outer](15.64,-0.35335878){0.34}
\pscircle[linecolor=black, linewidth=0.04, dimen=outer](17.04,-0.35335878){0.34}
\psbezier[linecolor=black, linewidth=0.04](15.68,-0.6933588)(15.68,-1.4933587)(17.06,-1.4533588)(17.06,-0.6533587837219238)
\psline[linecolor=black, linewidth=0.04](16.78,-0.17335878)(16.36,0.3266412)(17.44,0.8266412)
\psline[linecolor=black, linewidth=0.04](16.4,0.8266412)(16.78,0.66664124)
\psline[linecolor=black, linewidth=0.04](17.08,0.5266412)(17.48,0.3466412)(17.26,-0.13335878)
\psline[linecolor=black, linewidth=0.04](13.68,-0.13335878)(13.2,0.8266412)
\psline[linecolor=black, linewidth=0.04](14.02,-0.053358782)(14.44,0.8266412)
\psline[linecolor=black, linewidth=0.04](15.4,-0.13335878)(15.04,0.8266412)
\psline[linecolor=black, linewidth=0.04](15.84,-0.093358785)(15.98,0.8266412)
\rput[bl](7.92,-0.3333588){\LARGE{$-$}}
\rput[bl](14.5,-0.3733588){\LARGE{$-$}}
\rput[bl](16.84,-0.45335877){\Large{$m$}}
\rput[bl](11.36,-0.3733588){\LARGE{$-$}}
\rput[bl](0.0,-0.31335878){\LARGE{$=$}}
\rput[bl](15.4,-0.4933588){\Large{$m$}}
\rput[bl](13.64,-0.47335878){\Large{$m$}}
\rput[bl](12.26,-0.47335878){\Large{$m$}}
\rput[bl](10.2,-0.45335877){\Large{$m$}}
\rput[bl](8.8,-0.4333588){\Large{$m$}}
\rput[bl](6.82,-0.45335877){\Large{$m$}}
\rput[bl](5.42,-0.4333588){\Large{$m$}}
\rput[bl](1.1,-0.2733588){\LARGE{$4$}}
\psbezier[linecolor=black, linewidth=0.04](1.9,-0.59335876)(1.9,-1.3933588)(2.78,-1.3933588)(2.78,-0.5933587837219239)
\rput[bl](4.4,-0.31335878){\LARGE{$-$}}
\psbezier[linecolor=black, linewidth=0.04](3.24,-0.59335876)(3.24,-1.3933588)(4.12,-1.3933588)(4.12,-0.5933587837219239)
\psline[linecolor=black, linewidth=0.04](2.8,-0.59335876)(3.48,0.8666412)
\psline[linecolor=black, linewidth=0.04](3.24,-0.6133588)(3.12,-0.17335878)
\psline[linecolor=black, linewidth=0.04](2.96,0.16664122)(2.6,0.8666412)
\psline[linecolor=black, linewidth=0.04](4.12,-0.6133588)(4.16,0.84664124)
\psline[linecolor=black, linewidth=0.04](1.9,-0.6133588)(1.92,0.84664124)
\psline[linecolor=black, linewidth=0.04](7.18,-0.07335878)(7.5,0.36664122)(6.76,0.72664124)(6.74,0.72664124)
\psline[linecolor=black, linewidth=0.04](6.84,-0.093358785)(6.36,0.5066412)
\psline[linecolor=black, linewidth=0.04](6.38,0.46664122)(6.82,1.0866412)
\psline[linecolor=black, linewidth=0.04](6.48,0.78664124)(6.16,1.0466412)
\psline[linecolor=black, linewidth=0.04](5.84,-0.11335878)(6.4,0.22664122)
\psline[linecolor=black, linewidth=0.04](6.64,0.30664122)(6.94,0.5066412)
\psline[linecolor=black, linewidth=0.04](7.16,0.66664124)(7.62,1.0466412)
\psline[linecolor=black, linewidth=0.04](10.02,0.66664124)(8.86,1.0666412)
\psline[linecolor=black, linewidth=0.04](8.9,-0.053358782)(9.2,0.8066412)
\psline[linecolor=black, linewidth=0.04](9.32,0.9866412)(9.54,1.2866412)
\psline[linecolor=black, linewidth=0.04](9.22,-0.07335878)(9.62,0.72664124)
\psline[linecolor=black, linewidth=0.04](9.72,0.9266412)(9.98,1.2466412)
\end{pspicture}
}
\end{aligned}
\end{eqnarray*}
The second term on the left-hand side of \eqref{stringswingcor4} is obtained from the first by
composing with the braiding $c_{A,A\ox A\ox A}$. So we also obtain an expression for $4$ times this second term as a combination of five terms where the first coefficient is $+4$
and the other four are $-1$. Now add the two five term expressions.
The positive terms in each expression are the same so we obtain an $8$ as coefficient.
The negative terms factorize. 
(In sorting out the string diagrams for this, the reader must note that $\bigcdot$ and $m$ do not see the difference between preceding with the braiding or with its inverse.)  
If we precede this expression by $i\ox i\ox i \ox i$ and again denote the left-hand side of Proposition~\ref{m+switchm} by $n$, we find that the left-hand side of Corollary~\ref{stringswingcor4} is equal to
 \begin{eqnarray*}
\begin{aligned}
\psscalebox{0.6 0.6} % Change this value to rescale the drawing.
{
\begin{pspicture}(0,-1.6089694)(13.78,1.6089694)
\pscircle[linecolor=black, linewidth=0.04, dimen=outer](7.83,-0.38633522){0.32}
\pscircle[linecolor=black, linewidth=0.04, dimen=outer](6.34,-0.4363352){0.34}
\pscircle[linecolor=black, linewidth=0.04, dimen=outer](11.94,-0.4363352){0.34}
\pscircle[linecolor=black, linewidth=0.04, dimen=outer](10.5,-0.45633522){0.34}
\psbezier[linecolor=black, linewidth=0.04](6.38,-0.7363352)(6.38,-1.5363352)(7.78,-1.4763352)(7.78,-0.6763352203369141)
\psbezier[linecolor=black, linewidth=0.04](10.52,-0.77633524)(10.52,-1.5763352)(11.9,-1.5363352)(11.9,-0.736335220336914)
\rput[bl](4.46,-0.21633522){\LARGE{$-$}}
\psbezier[linecolor=black, linewidth=0.04](1.22,-0.9763352)(1.22,-1.7763352)(2.1,-1.7763352)(2.1,-0.9763352203369141)
\pscircle[linecolor=black, linewidth=0.04, dimen=outer](3.54,0.3836648){0.34}
\pscircle[linecolor=black, linewidth=0.04, dimen=outer](2.8,0.3836648){0.34}
\pscircle[linecolor=black, linewidth=0.04, dimen=outer](2.02,0.40366477){0.34}
\pscircle[linecolor=black, linewidth=0.04, dimen=outer](1.28,0.3836648){0.34}
\psline[linecolor=black, linewidth=0.04](1.26,0.7036648)(1.28,1.5836648)
\psline[linecolor=black, linewidth=0.04](2.04,0.72366476)(2.04,1.5836648)
\psline[linecolor=black, linewidth=0.04](2.8,0.7036648)(2.78,1.5636648)
\psline[linecolor=black, linewidth=0.04](3.58,0.6836648)(3.58,1.5836648)
\rput[bl](8.78,-0.11633522){\LARGE{$-$}}
\rput[bl](3.42,0.22366478){\Large{$i$}}
\rput[bl](7.64,-0.4963352){\Large{$n$}}
\rput[bl](0.0,-0.2763352){\LARGE{$8$}}
\rput[bl](2.66,0.24366479){\Large{$i$}}
\rput[bl](1.86,0.26366478){\Large{$i$}}
\rput[bl](1.12,0.24366479){\Large{$i$}}
\rput[bl](6.16,-0.5563352){\Large{$n$}}
\rput[bl](10.32,-0.5563352){\Large{$n$}}
\rput[bl](11.74,-0.53633523){\Large{$n$}}
\psline[linecolor=black, linewidth=0.04](7.66,-0.13633522)(7.3,1.5436648)
\psline[linecolor=black, linewidth=0.04](7.98,-0.11633522)(7.98,1.5436648)
\psline[linecolor=black, linewidth=0.04](10.32,-0.21633522)(10.04,1.5836648)
\psline[linecolor=black, linewidth=0.04](10.66,-0.15633522)(11.02,1.5836648)
\psline[linecolor=black, linewidth=0.04](11.76,-0.15633522)(11.48,1.5636648)
\psline[linecolor=black, linewidth=0.04](12.2,-0.21633522)(12.7,1.6036648)
\psline[linecolor=black, linewidth=0.04](6.14,-0.21633522)(5.86,1.5636648)
\psline[linecolor=black, linewidth=0.04](6.5,-0.17633522)(7.36,0.54366475)
\psline[linecolor=black, linewidth=0.04](7.62,0.6436648)(7.88,0.84366477)
\psline[linecolor=black, linewidth=0.04](8.1,0.90366477)(8.78,1.5436648)
\psbezier[linecolor=black, linewidth=0.04](2.66,-0.9963352)(2.66,-1.7963352)(3.54,-1.7763352)(3.54,-0.9763352203369141)
\psline[linecolor=black, linewidth=0.04](2.1,-0.9963352)(2.8,0.10366478)
\psline[linecolor=black, linewidth=0.04](1.22,-1.0163352)(1.22,0.10366478)
\psline[linecolor=black, linewidth=0.04](3.54,-0.9963352)(3.54,0.08366478)(3.54,0.04366478)
\psline[linecolor=black, linewidth=0.04](2.68,-1.0563352)(2.5,-0.65633523)
\psline[linecolor=black, linewidth=0.04](2.32,-0.45633522)(2.0,0.10366478)
\end{pspicture}
}
\end{aligned}
\end{eqnarray*}
from which the result follows by using the formula for $n$ in Proposition~\ref{m+switchm}.  
\end{proof}
\begin{corollary}
$(d_A-1)(d_A-2)(d_A-4)(d_A-8)=0$
\end{corollary}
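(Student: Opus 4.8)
The plan is to combine Proposition~\ref{AtoV}, Theorem~\ref{scarcityvpa}, and additivity of the dimension along the biproduct $A\cong I\oplus V$ coming from \eqref{directsum}. First I would invoke Proposition~\ref{AtoV}: the object $V$, equipped with the self-duality of Proposition~\ref{Vselfdual} and the wedge \eqref{wedgeforV}, is a vector product algebra in $\CV$. Since multiplication by $2$ is assumed invertible in every hom group, in particular $2$ is cancellable in $\CV(I,V)$ and in $\CV(I,I)$, so Theorem~\ref{scarcityvpa} applies and gives
\begin{eqnarray*}
d_V(d_V-1)(d_V-3)(d_V-7)=0
\end{eqnarray*}
in the endomorphism ring $\CV(I,I)$.

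Next I would identify $d_A$ with $1+d_V$. The key point is that the counit $\bigcdot$ of the self-duality $A\dashv A$ respects the decomposition $A\cong I\oplus V$: precomposition with $e\ox e$ recovers the canonical pairing $I\ox I\cong I$ by \eqref{ca1}, precomposition with $i\ox i$ recovers the counit of $V\dashv V$ by its very definition in Proposition~\ref{Vselfdual}, and the two mixed components $\bigcdot\circ(e\ox i)$ and $\bigcdot\circ(i\ox e)$ vanish. The vanishing follows because $\bigcdot\circ(1_A\ox e)=p$ is the left inverse of $e$ appearing just before \eqref{idempforV}, so by symmetry of $\bigcdot$ and naturality of the braiding each mixed component equals $p\circ i$ up to a braiding, while $p\circ i\circ q=p\circ(1_A-e\circ p)=p-p=0$, whence $p\circ i=0$ since $q$ is split epi. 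Granting this decomposition, the standard biproduct argument together with uniqueness of units (cf.\ Proposition~\ref{symadj}) identifies the unit $\eta_A$ with the sum of the units coming from $I$ and from $V$, so the dimension is additive: $d_A=d_I+d_V$. As $d_I=1$, this gives $d_V=d_A-1$.

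Finally, substituting $d_V=d_A-1$ into the displayed equation yields
\begin{eqnarray*}
(d_A-1)(d_A-2)(d_A-4)(d_A-8)=0
\end{eqnarray*}
as claimed. I expect the only step needing genuine (though routine) diagrammatic verification to be the compatibility of the self-duality of $A$ with the biproduct — concretely the vanishing of the mixed components of $\bigcdot$ — since everything else is a direct appeal to results already established. A purely diagrammatic alternative for $d_A=1+d_V$ would be to expand the loop computing $d_A$ using $1_A=e\,p+i\,q$ and \eqref{distribconds}: the $e\,p$ loop evaluates to $1$ by \eqref{ca1}, the $i\,q$ loop to $d_V$ by Proposition~\ref{Vselfdual}, and the cross terms again vanish by $p\circ i=0$.
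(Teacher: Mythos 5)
Your proposal is correct and follows essentially the same route as the paper: apply Proposition~\ref{AtoV} and Theorem~\ref{scarcityvpa} to $V$, then use additivity of dimension along $A\cong I\oplus V$ to substitute $d_V=d_A-1$. The only difference is that the paper simply asserts $d_{X\oplus Y}=d_X+d_Y$ as known, whereas you verify it by checking that the mixed components of $\bigcdot$ vanish via $p\circ i=0$ (which is indeed one of the direct sum equations \eqref{directsumstring1}); that verification is sound but not part of the paper's argument.
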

\begin{proof}
As the dimension of $X\oplus Y$ is $d_X+d_Y$, we see from \eqref{directsum} that $d_A=1+d_V$.
The result now follows from Proposition~\ref{AtoV} and Theorem~\ref{scarcityvpa}.
\end{proof}

Let $\mathbf{CA}$ denote the category of composition algebras; morphisms $f: A\to B$ are those which preserve the operations: 
$$(A\ox A\xra{f\ox f}B\ox B\xra{\bigcdot}I) = (A\ox A\xra{\bigcdot}I) \ ,$$
$$(A\ox A\xra{f\ox f}B\ox B\xra{m}B) = (A\ox A\xra{\bigcdot}A\xra{f} B) \ ,$$ 
$$(I\xra{e}A\xra{f}B) = (I\xra{e}B) \ .$$

Let $\mathbf{VPA}$ denote the category of composition algebras; morphisms $h: V\to W$ are those which preserve the operation $\bigcdot$ as above and $\wedge$ in the usual sense.
\begin{theorem}
Let $\CV$ be a braided monoidal additive category with finite direct sums and splitting of idempotents. 
Assume also that multiplication by $2$ is invertible in each hom abelian group $\CV(X,Y)$.
Then the functor $\Phi : \mathbf{CA}\to \mathbf{VPA}$, taking each composition algebra $A$ to the vpa $V$ of Proposition~\ref{AtoV} and each morphism to its restriction,
is an equivalence of categories.  
\end{theorem}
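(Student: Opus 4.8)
The plan is to exhibit a quasi-inverse $\Psi:\mathbf{VPA}\to\mathbf{CA}$ together with natural isomorphisms $\Phi\Psi\cong\mathrm{id}_{\mathbf{VPA}}$ and $\Psi\Phi\cong\mathrm{id}_{\mathbf{CA}}$. On objects, $\Psi$ sends a vpa $V$ to $A:=I\oplus V$ (the direct sum exists by hypothesis), equipped with the self-duality that is the direct sum of the canonical self-duality of $I$ (counit $1_I:I\ox I\to I$) and the given symmetric self-duality of $V$; with identity $e:I\to A$ the first coprojection; and with multiplication $m:A\ox A\to A$ whose four components under $A\ox A\cong I\oplus V\oplus V\oplus(V\ox V)$ are $1_I$ on $I$, the two identities on the copies of $V$, and $-\bigcdot:V\ox V\to I$ together with $\wedge:V\ox V\to V$ on the last summand. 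This is simply the diagrammatic rendering of the multiplication \eqref{quat}. A morphism $h:V\to W$ goes to $1_I\oplus h$. One checks \eqref{ca1}, \eqref{ca0} and \eqref{ca2} by inspection of the block form of $m$, and \eqref{ca3} by decomposing its three inputs and its output over $I\oplus V$: the components with at least one input in $I$ collapse to linearity and the unit laws, the output-in-$I$ component with all inputs in $V$ follows from \eqref{stringasym} and \eqref{stringcycsym}, and the output-in-$V$ component with all inputs in $V$ is exactly the "strange" axiom \eqref{stringstrange}, equivalently Corollary~\ref{stringswingcor4}. That $1_I\oplus h$ is a morphism of composition algebras is immediate since $h$ preserves $\bigcdot$ and $\wedge$.

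For the isomorphism $\Phi\Psi\cong\mathrm{id}_{\mathbf{VPA}}$: when $A=\Psi V=I\oplus V$, the left inverse $p=\bigcdot\circ(1_A\ox e)$ of $e$ is the first projection, so the idempotent \eqref{idempforV} is $1_A-ep$, namely the second projection onto the subobject $V$; splitting it returns $V$ itself, with $i,q$ the evident coprojection and projection. By construction of $m$, the $V\ox V\to V$ component of $\tfrac12(m-m\circ c_{A,A})$ is $\tfrac12(\wedge-\wedge\circ c_{V,V})=\wedge$ using \eqref{stringasym}, so the wedge produced by formulas \eqref{2wedge} and \eqref{wedgeforV} is the original $\wedge$; likewise the self-duality of Proposition~\ref{Vselfdual} restricts to the original $\bigcdot$. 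Naturality in $V$ is clear.

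For the isomorphism $\Psi\Phi\cong\mathrm{id}_{\mathbf{CA}}$: given a composition algebra $A$, Proposition~\ref{AtoV} produces $V$ together with the split direct sum \eqref{directsum}, i.e. the isomorphism $\binom{p}{q}:A\xra{\ \sim\ }I\oplus V$ with inverse $(e\ \ i)$. It remains to see that this carries the composition-algebra structure of $A$ to that of $\Psi V$. Compatibility with $e$ and with $\bigcdot$ follows from the biproduct relations $pe=1_I$, $qe=0$, $pi=0$, $qi=1_V$ together with \eqref{ca1} and the formula for the self-duality of $V$; compatibility with $m$ is checked block by block, the blocks with an input in $I$ being handled by the unit laws \eqref{ca0} and \eqref{ca2}. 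The $V\ox V\to V$ block of the transported $m$ is $\wedge$ because the symmetric part $q\circ(m+m\circ c_{A,A})\circ(i\ox i)$ vanishes, which is precisely the intermediate computation in the proof of Proposition~\ref{m+switchm}; and the $V\ox V\to I$ block is $-\bigcdot$ by Proposition~\ref{proponmandi} (or directly from \eqref{ca3}). Naturality is again routine. Combining the three pieces, $\Phi$ is an equivalence.

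The genuine work — and expected main obstacle — is the verification that $\Psi V$ is a composition algebra, and inside it the composition identity \eqref{ca3}: its "all inputs in $V$" component carries the content of Corollary~\ref{stringswingcor4}, but extracting it from \eqref{ca3} requires a careful, braiding-sensitive bookkeeping of the blocks of $m\circ(m\ox 1_A)+m\circ(1_A\ox m)$ over $I\oplus V$ (one must track where $c$ versus $c^{-1}$ appears, noting as in the proof of Proposition~\ref{AtoV} that $\bigcdot$ and $m$ do not see the difference). A close second is the identity $p\circ m\circ(i\ox i)=-\bigcdot$ needed in the other round trip. Both are made tractable by reading Propositions~\ref{proponmandi} and \ref{m+switchm}, and the argument of Proposition~\ref{AtoV}, in reverse.
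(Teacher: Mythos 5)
Your proposal is correct and follows essentially the same route as the paper: you build $A=I\oplus V$ with the quaternion-style multiplication of \eqref{definem}, reduce axiom \eqref{ca3} to the vpa axioms by decomposing the inputs over $I\oplus V$ (the paper's sixteen cases, with only the all-$V$ case carrying content), and recover $m$ from $\wedge$ via \eqref{2wedge} and Proposition~\ref{m+switchm}. Packaging this as an explicit quasi-inverse $\Psi$ rather than as fully-faithful-plus-essentially-surjective is only a cosmetic difference.
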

\begin{proof}
That $\Phi$ is fully faithful follows from \eqref{directsum}, \eqref{2wedge} and \eqref{m+switchm} which show how $\wedge$ and $m$ can be defined in terms of each other.

It remains to show that $\Phi$ is essentially surjective on objects.
Take a vpa $V$. Using the direct sums in $\CV$, there exists an $A$ as
defined by the direct sum diagram \eqref{directsum}.
A symmetric self-duality for $A$ is defined by
 \begin{eqnarray}\label{Aselfdual}
\begin{aligned}
\psscalebox{0.6 0.6} % Change this value to rescale the drawing.
{
\begin{pspicture}(0,-1.0184594)(11.46,1.0184594)
\psbezier[linecolor=black, linewidth=0.04](0.56,-0.28252193)(0.56,-1.0825219)(2.04,-1.1025219)(2.04,-0.30252193450927733)
\psline[linecolor=black, linewidth=0.04](2.04,-0.36252195)(2.06,0.9774781)
\psline[linecolor=black, linewidth=0.04](0.56,-0.30252194)(0.56,0.95747805)
\psbezier[linecolor=black, linewidth=0.04](4.08,-0.18252194)(4.08,-0.98252195)(5.56,-1.002522)(5.56,-0.20252193450927736)
\pscircle[linecolor=black, linewidth=0.04, dimen=outer](9.24,-0.10252193){0.32}
\pscircle[linecolor=black, linewidth=0.04, dimen=outer](7.88,-0.10252193){0.32}
\pscircle[linecolor=black, linewidth=0.04, dimen=outer](5.54,0.09747806){0.32}
\pscircle[linecolor=black, linewidth=0.04, dimen=outer](4.06,0.09747806){0.32}
\psline[linecolor=black, linewidth=0.04](4.04,0.39747807)(4.02,0.99747807)
\psline[linecolor=black, linewidth=0.04](5.54,0.37747806)(5.52,0.99747807)
\psline[linecolor=black, linewidth=0.04](7.86,0.19747807)(7.82,1.0174781)
\psline[linecolor=black, linewidth=0.04](9.24,0.17747806)(9.2,0.99747807)
\rput[bl](5.34,-0.062521935){\Large{$q$}}
\rput[bl](7.68,-0.26252192){\Large{$p$}}
\rput[bl](5.72,0.57747805){\Large{$A$}}
\rput[bl](5.5,-0.9425219){\Large{$V$}}
\rput[bl](6.62,-0.0025219345){\LARGE{$+$}}
\rput[bl](2.94,0.017478066){\LARGE{$=$}}
\rput[bl](3.9,-0.062521935){\Large{$q$}}
\rput[bl](9.06,-0.24252194){\Large{$p$}}
\rput[bl](4.14,0.57747805){\Large{$A$}}
\rput[bl](7.94,0.6574781){\Large{$A$}}
\rput[bl](9.4,0.6774781){\Large{$A$}}
\rput[bl](2.16,0.37747806){\Large{$A$}}
\rput[bl](0.0,0.37747806){\Large{$A$}}
\end{pspicture}
}
\end{aligned}
\end{eqnarray}
while we have the identity $e$ as part of diagram \eqref{directsum}.
We define the multiplication $m: A\ox A\to A$ by insisting that 
$e$ is an identity for that multiplication together with the following two equations.
 \begin{eqnarray}\label{definem}
\begin{aligned}
\psscalebox{0.6 0.6} % Change this value to rescale the drawing.
{
\begin{pspicture}(0,-2.1299999)(18.02,2.1299999)
\psbezier[linecolor=black, linewidth=0.04](4.34,0.24999996)(4.34,-0.55)(5.82,-0.57000005)(5.82,0.22999996185302735)
\psline[linecolor=black, linewidth=0.04](5.82,0.16999996)(5.84,2.09)
\psline[linecolor=black, linewidth=0.04](4.34,0.22999996)(4.34,2.11)
\pscircle[linecolor=black, linewidth=0.04, dimen=outer](11.1,0.83){0.32}
\pscircle[linecolor=black, linewidth=0.04, dimen=outer](1.1,-1.09){0.32}
\pscircle[linecolor=black, linewidth=0.04, dimen=outer](1.94,0.86999995){0.32}
\pscircle[linecolor=black, linewidth=0.04, dimen=outer](0.32,0.89){0.32}
\rput[bl](4.46,1.63){\Large{$V$}}
\rput[bl](13.64,0.14999996){\LARGE{$=$}}
\rput[bl](11.82,-1.11){\Large{$q$}}
\rput[bl](0.94,-1.23){\Large{$p$}}
\pscircle[linecolor=black, linewidth=0.04, dimen=outer](1.12,0.12999997){0.32}
\psline[linecolor=black, linewidth=0.04](0.32,1.1899999)(0.32,2.1299999)
\psline[linecolor=black, linewidth=0.04](1.94,1.15)(1.94,2.11)
\psline[linecolor=black, linewidth=0.04](0.42,0.59)(0.84,0.26999995)
\psline[linecolor=black, linewidth=0.04](1.9,0.59)(1.4,0.24999996)
\psline[linecolor=black, linewidth=0.04](1.08,-0.17000003)(1.1,-0.83000004)
\rput[bl](3.62,0.68999994){\LARGE{$-$}}
\rput[bl](11.68,-0.07000004){\Large{$m$}}
\rput[bl](12.56,0.66999996){\Large{$i$}}
\rput[bl](0.2,0.74999994){\Large{$i$}}
\rput[bl](1.8,0.72999996){\Large{$i$}}
\rput[bl](2.56,0.59){\LARGE{$=$}}
\rput[bl](0.88,-0.010000038){\Large{$m$}}
\pscircle[linecolor=black, linewidth=0.04, dimen=outer](12.7,0.83){0.32}
\pscircle[linecolor=black, linewidth=0.04, dimen=outer](11.92,0.06999996){0.32}
\pscircle[linecolor=black, linewidth=0.04, dimen=outer](11.98,-0.95000005){0.32}
\psline[linecolor=black, linewidth=0.04](14.4,2.09)(15.2,0.32999995)(15.2,-2.13)
\psline[linecolor=black, linewidth=0.04](15.22,0.30999997)(15.92,2.11)
\psline[linecolor=black, linewidth=0.04](11.06,1.11)(11.04,2.09)
\psline[linecolor=black, linewidth=0.04](12.72,1.11)(12.7,2.09)
\psline[linecolor=black, linewidth=0.04](11.18,0.53)(11.72,0.28999996)
\psline[linecolor=black, linewidth=0.04](12.66,0.53)(12.16,0.28999996)
\psline[linecolor=black, linewidth=0.04](11.96,-0.21000004)(11.98,-0.69000006)
\psline[linecolor=black, linewidth=0.04](12.0,-1.27)(12.0,-2.07)
\rput[bl](11.02,0.66999996){\Large{$i$}}
\rput[bl](14.04,1.3499999){\Large{$V$}}
\rput[bl](15.96,1.37){\Large{$V$}}
\end{pspicture}
}
\end{aligned}
\end{eqnarray}
Axioms \eqref{ca1}, \eqref{ca0}, \eqref{ca2} are then obvious.
In axiom \eqref{ca3}, we replace the four counits for $A\dashv A$ occurring in the equation by the right-hand side of \eqref{Aselfdual}.
Then it suffices to check the result for the sixteen cases obtained by attaching
either of the direct sum injections $e$ or $i$ to the four input strings.
The only case that needs attention is when all four input strings have $i$ attached;
this produces the following condition.
 \begin{eqnarray}\label{ca3restricted}
\begin{aligned}
\psscalebox{0.5 0.5} % Change this value to rescale the drawing.
{
\begin{pspicture}(0,-1.7784991)(21.699995,1.7784991)
\psbezier[linecolor=black, linewidth=0.04](0.52,-1.0059427)(0.52,-1.8059428)(2.0,-1.8259428)(2.0,-1.0259427261352538)
\rput[bl](16.14,-0.045942727){\LARGE{$=$}}
\psbezier[linecolor=black, linewidth=0.04](20.18,-1.1459427)(20.18,-1.9459428)(21.66,-1.9659427)(21.66,-1.165942726135254)
\psbezier[linecolor=black, linewidth=0.04](18.36,-1.1059427)(18.36,-1.9059427)(19.84,-1.9259428)(19.84,-1.125942726135254)
\psbezier[linecolor=black, linewidth=0.04](14.3,-1.0859427)(14.3,-1.8859427)(15.78,-1.9059427)(15.78,-1.105942726135254)
\psbezier[linecolor=black, linewidth=0.04](12.4,-1.0659428)(12.4,-1.8659427)(13.88,-1.8859427)(13.88,-1.0859427261352539)
\psbezier[linecolor=black, linewidth=0.04](9.1,-1.0459428)(9.1,-1.8459427)(10.58,-1.8659427)(10.58,-1.0659427261352539)
\psbezier[linecolor=black, linewidth=0.04](5.8,-1.0059427)(5.8,-1.8059428)(7.28,-1.8259428)(7.28,-1.0259427261352538)
\psbezier[linecolor=black, linewidth=0.04](3.94,-0.9859427)(3.94,-1.7859427)(5.42,-1.8059428)(5.42,-1.0059427261352538)
\psline[linecolor=black, linewidth=0.04](18.36,-1.1259427)(18.34,1.7340573)
\psline[linecolor=black, linewidth=0.04](19.86,-1.1859428)(19.76,1.6940572)
\psline[linecolor=black, linewidth=0.04](20.18,-1.1459427)(20.08,1.7140573)
\psline[linecolor=black, linewidth=0.04](21.68,-1.2059427)(21.62,1.7140573)
\rput[bl](17.48,-0.045942727){\LARGE{$2$}}
\psline[linecolor=black, linewidth=0.04](0.52,-1.0259427)(0.02,1.7540573)(0.02,1.6940572)
\psline[linecolor=black, linewidth=0.04](2.0,-1.0659428)(2.18,0.11405727)(0.72,1.7540573)
\psline[linecolor=black, linewidth=0.04](2.2,0.11405727)(2.58,1.7740573)
\psline[linecolor=black, linewidth=0.04](0.36,-0.12594272)(1.2,0.8140573)
\psline[linecolor=black, linewidth=0.04](1.56,1.1140573)(2.08,1.7540573)
\psline[linecolor=black, linewidth=0.04](5.8,-1.0259427)(4.86,1.7340573)
\psline[linecolor=black, linewidth=0.04](5.42,-1.0459428)(5.48,-0.58594275)
\psline[linecolor=black, linewidth=0.04](5.62,-0.14594273)(6.22,1.7340573)
\psline[linecolor=black, linewidth=0.04](7.28,-1.0459428)(7.26,1.7140573)
\psline[linecolor=black, linewidth=0.04](3.94,-1.0259427)(3.98,1.7340573)
\psline[linecolor=black, linewidth=0.04](9.1,-1.0659428)(8.44,1.7340573)
\psline[linecolor=black, linewidth=0.04](10.6,-1.1059427)(10.58,1.6940572)
\psline[linecolor=black, linewidth=0.04](10.6,-0.005942726)(9.46,1.6940572)
\psline[linecolor=black, linewidth=0.04](8.9,-0.24594273)(10.0,0.5140573)
\psline[linecolor=black, linewidth=0.04](10.26,0.71405727)(10.46,0.8740573)
\psline[linecolor=black, linewidth=0.04](10.74,0.9940573)(11.52,1.6940572)
\psline[linecolor=black, linewidth=0.04](14.32,-1.1259427)(13.06,1.6940572)
\psline[linecolor=black, linewidth=0.04](15.78,-1.1259427)(13.98,1.6940572)
\psline[linecolor=black, linewidth=0.04](12.4,-1.0859427)(12.4,1.6940572)
\psline[linecolor=black, linewidth=0.04](13.9,-1.1059427)(14.0,-0.7459427)
\psline[linecolor=black, linewidth=0.04](14.14,-0.48594272)(14.54,0.41405728)
\psline[linecolor=black, linewidth=0.04](14.8,0.7540573)(15.22,1.6540573)
\rput[bl](11.3,-0.005942726){\LARGE{$+$}}
\rput[bl](7.82,-0.005942726){\LARGE{$+$}}
\rput[bl](2.78,-0.005942726){\LARGE{$+$}}
\end{pspicture}
}
\end{aligned}
\end{eqnarray}
However, if you take the vpa axiom \eqref{stringstrange}, move the negative terms to the
left-hand side, drag the bottom string in each term up to the right, apply the braiding 
to the top middle two strings of each term, and manipulate a little using the earlier vpa axioms and \eqref{stringswing}, we obtain \eqref{ca3restricted}. 
Finally, observe that $\Phi(A) \cong V$.
\end{proof}  
 
\begin{center}
--------------------------------------------------------
\end{center}

\appendix


\begin{thebibliography}{000}

\bibitem{Baez2001} John C. Baez, \textit{The octonions}, Bulletin Amer. Math. Society \textbf{39(2)} (2001) 145--205.\label{Baez2001}

\bibitem{Boos1998} Dominik Boos, \textit{Ein tensorkategorieller Zugang zum Satz von Hurwitz}, (Diplomarbeit ETH Z\"urich, March 1998) 42 pp.\label{Boos1998}

\bibitem{Hurwitz1898} Adolph Hurwitz, \textit{\"Uber die Komposition der quadratischen Formen von beliebig vielen Variablen}, Nachrichten Ges. der Wiss. G\"ottingen (1898) 309--316; in: Mathematische Werke. Bd. II (Birkh\"auser Verlag, Basel, 1963) 565--571.

\bibitem{Hurwitz1922} Adolph Hurwitz, \textit{\"Uber die Komposition der quadratischen Formen}, Math. Ann. \textbf{88} (1922) 1--25.\label{Hurwitz1922}

\bibitem{37} Andr\'e Joyal and Ross Street, \textit{The geometry of tensor calculus I}, Advances in Math. \textbf{88} (1991) 55--112.\label{37}

\bibitem{44} Andr\'e Joyal and Ross Street, \textit{Braided tensor categories}, Advances in Math. \textbf{102} (1993) 20--78.\label{44}

\bibitem{CWM} Saunders Mac Lane, \textit{Categories for the Working Mathematician}, Graduate Texts in Mathematics \textbf{5} (Springer-Verlag, 1971).\label{CWM} 

\bibitem{Maur1998} Susanne Maurer, \textit{Vektorproduktalgebren}, (Diplomarbeit Universit\"at Regensburg, April 1998) 39 pp.\label{Maur1998}

\bibitem{Rost1996} Markus Rost, \textit{On the dimension of a composition algebra}, Documenta Mathematica \textbf{1} (1996) 209--214.\label{Rost1996}

\bibitem{West2010} Bruce W. Westbury, \textit{Hurwitz' theorem on composition algebras},
arXiv:1011.6197 [math.RA].\label{West2010}

\end{thebibliography}
\end{document}